\newcommand{\BA}{{\mathbb {A}}} 
\newcommand{\BC}{{\mathbb {C}}}
 \newcommand{\BN}{{\mathbb {N}}}
\newcommand{\BQ}{{\mathbb {Q}}} \newcommand{\BR}{{\mathbb {R}}}
 \newcommand{\BZ}{{\mathbb {Z}}}
\newcommand{\CA}{{\mathcal {A}}}
\newcommand{\CK}{{\mathcal {K}}} 
\newcommand{\CM}{{\mathcal {M}}} 
\newcommand{\CO}{{\mathcal {O}}} \newcommand{\CP}{{\mathcal {P}}}
\newcommand{\CS}{{\mathcal {S}}} 
\newcommand{\CW}{{\mathcal {W}}}
\newcommand{\RU}{{\mathrm {U}}}
 \newcommand{\fp}{{\mathfrak{p}}}
\newcommand{\ad}{{\mathrm{ad}}}
\newcommand{\diag}{{\mathrm{diag}}}
 \newcommand{\GL}{{\mathrm{GL}}}
\newcommand{\GSp}{{\mathrm{GSp}}}
\newcommand{\Hom}{{\mathrm{Hom}}}
\renewcommand{\mod}{\ \mathrm{mod}\ }\renewcommand{\Re}{{\mathrm{Re}}}
\newcommand{\Res}{{\mathrm{Res}}}
\newcommand{\SL}{{\mathrm{SL}}}
\newcommand{\St}{{\mathrm{St}}}
\newcommand{\Sym}{{\mathrm{Sym}}}
\newcommand{\Stab}{{\mathrm{Stab}}}
\newcommand{\Vol}{{\mathrm{Vol}}}
\newcommand{\Ext}{{\mathrm{Ext}}}
\newcommand{\wt}[1]{{\widetilde {#1}}}
\newcommand{\sk}{\medskip}
\newcommand{\lra}{\longrightarrow}
\newcommand{\bs}{\backslash}
\newcommand{\s}{\sk\noindent}
\def\varW@#1#2{%
\vtop{\m@th\ialign{##\cr
\hfil$#1 \mathrm{colim} $\hfil\cr
\noalign{\nointerlineskip\kern1.5\ex@}#2\cr
\noalign{\nointerlineskip\kern-\ex@}\cr}}
}
\def\colim{%
\mathop{\mathpalette\varW@{}}\nmlimits@
}\makeatother
\theoremstyle{plain}
\newtheorem{thm}{Theorem}[section] \newtheorem{cor}[thm]{Corollary}
\newtheorem{lem}[thm]{Lemma}  \newtheorem{prop}[thm]{Proposition}
\newtheorem {conj}[thm]{Conjecture}
\theoremstyle{remark} \newtheorem{remark}[thm]{Remark}
\theoremstyle{definition} 
\theoremstyle{definition}  
\newtheorem{defn+lem}[thm]{Definition and Lemma}
\numberwithin{equation}{section}
\newcommand*{\sheafhom}{\mathrm{H}\kern -.5pt om}
\begin{document}
	\title{Co-periods and Central Symmetric Cube $L$-values}
	
    \begin{abstract}
    In this article, we  study  the co-period  integral attached to an automorphic form on $\GL(2)$ and two exceptional theta series on the cubic Kazhdan-Patterson cover of $\GL(2)$.  In the local aspect, we show the $\Hom$-space is always of one dimension and conduct the unramified calculations. In the
global aspect, we give the Euler decomposition for the co-period integrals of Eisenstein series and propose an Ichino-Ikeda type conjecture relating the co-period integrals of cuspidal forms to the central critical value of symmetric cube $L$-functions. We also deduce from the local multiplicity one result that there exist cuspidal automorphic forms with prescribed local components and non-vanishing central symmetric cube $L$-values.
\end{abstract}

\author{Li Cai}
\address{Academy for Multidisciplinary Studies\\
Beijing National Center for Applied Mathematics\\
Capital Normal University\\
Beijing, 100048, People's Republic of China}
\email{caili@cnu.edu.cn}

\author{Yangyu Fan}
\address{Key Laboratory of Algebraic Lie Theory and Analysis of Ministry of Education\\ School of Mathematics and Statistics\\ Beijing Institute of Technology\\ Beijing, 100081, People's Republic of China}
\email{yangyu.fan@bit.edu.cn}

\author{Dongming She}
\address{Yanqi Lake Beijing Institute of Mathematical Sciences and Applications (BIMSA)\\ Huairou District, 100084, Beijing, People's Republic of China.}
\email{shedongming@bimsa.cn}
\maketitle
	\tableofcontents{}

    \section{Introduction}
Let $G$ be a locally compact group. An important aspect of the representation theory of $G$ concerns the restriction of  $G$-representations to a suitable subgroup $H\subset G$.

Consider the case  $(G,H)$ is a pair of algebraic groups over a field $F$. When $F$ is a local field, a basic problem in the
representation theory is to study the $\Hom$-space $\Hom_{H(F)}(\pi,\BC)$ where $\pi$ is an admissible $G(F)$-representation. 
When $F$ is a global field with $\BA$ its adelic ring, a basic problem in the automorphic representation theory is to study the period integral
\[\int_{H(F) \bs H(\BA)} \varphi(h)dh, \quad \varphi \in \pi\]
where $\pi$ is an automorphic  $G(\BA)$-representation. For a large class of $(G,H)$, called {\em spherical pairs}, it is expected that
the local $\Hom$-spaces can be characterized in terms of $L$-parameters and the global period integrals are closely related
to $L$-values. The work of Sakellaridis-Venkatesh \cite{SV} gives explicit conjectures for such spherical pairs. Recently, these conjectures 
have been largely generalized to hyperspherical Hamiltonian spaces by Ben-Zvi, Sakellaridis, and
Venkatesh  in \cite{BZSV}. 

Comparing with algebraic groups, the study of such problems for  covering groups is  still relatively limited. Among the few examples,  the  {\em co-period} integrals
\[ \int_{\GL_r(F)\BA^\times \bs \GL_r(\BA)} ff_+f_-(g)dg,\quad f\in \sigma, f_\pm\in\sigma_\pm\]
are particularly interesting for its close relation with the symmetric power L-functions. Here  $\sigma$ is an automorphic   $\GL_r(\BA)$-representation and  $\sigma_\pm$ are the exceptional theta series on $n$-th metaplectic cover of $\GL_r(\BA)$ (with opposite $\mu_n$-action). 

For the symmetric square $L$-function $L(s,\sigma,\Sym^2)$, Shahidi \cite{Sha88}  established its meromorphic continuation and functional equation using the Langlands-Shahidi method. Bump and Ginzburg \cite{BG92} subsequently provided its integral representation. Takeda \cite{Tak14} later extended this to the twisted symmetric square for a Hecke character $\chi$.

For the symmetric cube $L$-function $L(s,\sigma,\Sym^3)$ with $r=2$, the meromorphic continuation and  functional equation is  given by Shahidi \cite{Sha89}. Its integral representation is then provided by Bump-Ginzburg-Hoffstein \cite{BGH96}, while Kim and Shahidi \cite{KS99} showed it is entire unless $\sigma$ is monomial. Many other pivotal works exist on symmetric power $L$-functions. Among them, we mention the work of Newton-Thorne on the
modularity of the symmetric power for regular algebraic cuspidal automorphic
representations on $\GL_2(\BA_\BQ)$ \cite{NT1,NT2}.

Results for co-period integrals in the case  $n=2$ are relatively complete.   The local problem is studied by Kable \cite{Kab01}, Kaplan \cite{Kap17} and  Yamana \cite{Yama17}. 
For a nonarchimedean place $v$, the  local $\Hom$-space is at most one-dimensional for any irreducible admissible unitary representation
$\sigma_v$ on $\GL_r(F_v)$. For the global aspect, the celebrated work of Bump-Ginzburg essentially proves that 
the co-period integrals are nonvanishing if and only if the symmetric square $L$-function $L(s,\sigma,\Sym^2)$ has a simple pole at $s=1$ (See \cite[Theorem 7.6]{BG92}). Moreover, there is an Euler decomposition of the co-period integrals in terms of the residue of 
$L(s,\sigma,\Sym^2)$ at $s=1$ and the local zeta integrals (for this decomposition, in addition to the work of Bump-Ginzburg mentioned above, 
the characterization of local $L$-factors with respect to local zeta integrals is also needed \cite{Yama17}).

The present article will concentrate on the case $n=3$ and $r=2$, where only partial results are known. By the work of Ginzburg-Jiang-Rallis \cite{GJR01}, the nonvanishing of global co-period integrals implies that the
nonvanishing of the central value of the symmetric cube $L$-series $L(s,\sigma,\Sym^3)$. The proof is based on the work of Kim-Shahidi \cite{KS99} and the residue method. Recently, the aforementioned result of
Ginzburg-Jiang-Rallis is applied to study the nonvanishing of the central value of the symmetric cube $L$-function \cite{HJL23}. 

Limited to our knowledge, no results for co-period  exist in the literature when $n \geq 3$ and $r \geq 3$.

In this paper, we conduct a systematic study on the co-period integral in the case $n=3$ and $r=2$. Hopefully our results will shed some light on how to incorporate period integrals of covering groups  into the framework of  Ben-Zvi, Sakellaridis and Venkatesh \cite{BZSV}.  For the local aspect, we obtain that the $\Hom$-space is always of dimension
one and when everything is unramified, we relate the local integral of matrix coefficients with the local symmetric cube $L$-factors. For the
global aspect,  we give the Euler decomposition for the co-period integrals of Eisenstein series and propose an Ichino-Ikeda type conjecture for co-period integrals of cuspidal forms based on our local results.

Subsequently, we furnish full details for the results and proofs presented therein. 

Let $F$ be a number field containing  all $3$-th roots of  unity and fix an embedding $\epsilon: \mu_3\hookrightarrow \BC^\times$. Let $G=\GL_2$ and $Z, N, T\subset G$ be the center, the upper unipotent subgroup and the diagonal torus.  Let $\wt{G}(\BA):=\wt{G}^{(c)}(\BA)$ be the cubic metaplectic covering group constructed by the Kubota 2-cocycle $\sigma_K^{(c)}$. It sits in the exact sequence
    \[0\lra\mu_3\lra \wt{G}(\BA)\stackrel{p}{\lra} G(\BA)\lra 0.\]
    The projection map  $p:\ \wt{G}(\BA)\to G(\BA)$ has a preferred splitting $s$ (of sets).  When  restricted to 
$G(F)$ (resp. $N(\BA)$), $s$ is an injective group homomorphism.  Let $\wt{Z^{(c)}}(\BA)\subset \wt{G}(\BA)$ be the center and $\wt{Z}(\BA)\subset \wt{T}(\BA)$ be the pre-image of $Z(\BA)\subset T(\BA)$ in $\wt{G}(\BA)$. 

Take $T_*(\BA)\subset T(\BA)$ such that  its pre-image $\wt{T_*}(\BA)\subset \wt{T}(\BA)$ is a maximal abelian subgroup  containing $T(F)$.  Let $\chi_{\pm}$ be {\em exceptional characters} on $T(F)\bs \wt{T_*}(\BA)$ such that \[\chi_{\pm}|_{\mu_3}=\epsilon^\pm, \quad 
\chi_\pm(s(\diag\{x^3,x^{-3}\}))=|x|_{\BA}.\]  The usual parabolic induction $I_{\wt{T_*}(\BA)N(\BA)}^{\wt{G}(\BA)}\chi_\pm$ admits an irreducible quotient $\sigma_\pm=\otimes_{\BC[\mu_3]}^\prime \sigma_{\pm,v}$ which  can be realized in the space of automorphic forms $\CA(G(F)\bs \wt{G}(\BA))$ via taking residues of the Eisenstein series attached to $I_{\wt{T_*}(\BA)N(\BA)}^{\wt{G}(\BA)}\chi_\pm$.
 Denote by $\omega_{\pm}$ the central character of $\sigma_{\pm}$.  Let $\sigma=\otimes^\prime\sigma_v$ be an irreducible  automorphic $G(\BA)$-representation with central character $\omega$. For the clarity of  exposition, in the introduction we assume that
 \[\chi_+\chi_-(s(\diag\{x^3,y^3\}))=|x|_\BA|y|_\BA^{-1}, \quad \omega\omega_+\omega_-=1.\]
In particular, $\omega^3 = 1$ so that $\omega$ is unitary.  We remark that all the following  results hold uniformly for all Kazhdan-Patterson cubic covering groups, i.e. allowing all Kutota 2-cocycles $\sigma_K^{(c)}$, $c\in\BZ/3\BZ$. In particular,   the exceptional theta series could have multiple  Whittaker functionals.

Our first result is the local multiplicity one property:
\begin{thm}[Proposition \ref{n-SC} and Theorem \ref{sc}] \label{Mulone} Let $v$ be a place of $F$. If $v\nmid 3$ or $\sigma_v$ is non-supercuspidal, then
\[\dim \Hom_{G(F_v)}(\sigma_v\otimes\sigma_{+,v}\otimes \sigma_{-,v},\BC)=1.\]
\end{thm}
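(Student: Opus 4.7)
The strategy is to bundle the two theta representations into a single genuine $G(F_v)$-representation and then reduce the Hom-space computation to a Jacquet-module calculation via Mackey theory applied to a principal-series realization of the exceptional theta series.

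\textbf{Setup.} Because $\sigma_{+,v}$ and $\sigma_{-,v}$ carry opposite central characters $\epsilon^\pm$ on $\mu_3\subset\wt{G}(F_v)$, the tensor product $\Pi_v:=\sigma_{+,v}\otimes\sigma_{-,v}$ descends through the covering map to a bona fide $G(F_v)$-representation, and the claim becomes
\[
\dim \Hom_{G(F_v)}(\Pi_v,\,\sigma_v^\vee)=1.
\]

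\textbf{Case 1: $\sigma_v$ non-supercuspidal.} Embed $\sigma_v^\vee\hookrightarrow I_{B(F_v)}^{G(F_v)}(\mu)$ for a suitable character $\mu$ of $T(F_v)$. Frobenius reciprocity identifies the Hom-space with $\Hom_{T(F_v)}\bigl((\Pi_v)_N,\mu\delta^{1/2}\bigr)$, so the task becomes computing the Jacquet module of $\Pi_v$. For this, realize each $\sigma_{\pm,v}$ as the unique irreducible quotient of the degenerate principal series $I_{\wt{B}(F_v)}^{\wt{G}(F_v)}(\chi_{\pm,v})$ and apply the geometric lemma to $I(\chi_{+,v})\otimes I(\chi_{-,v})$, now regarded as a $G(F_v)$-representation via the diagonal embedding. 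The Bruhat decomposition $\wt B\bs\wt G/\wt B=\{e,w\}$ yields a two-step filtration whose closed-cell piece is a $T(F_v)$-character built from $\chi_+\chi_-|_{T_*}$ and whose open-cell piece is compactly induced from a $T(F_v)$-character built from $\chi_+\cdot{}^w\!\chi_-$. A direct character comparison using the exceptional values $\chi_\pm(s(\diag\{x^3,x^{-3}\}))=|x|_\BA$ and the central character relation $\omega\omega_+\omega_-=1$ shows that at most one of the two pieces contributes a nonzero $T(F_v)$-equivariant map to $\mu\delta^{1/2}$, bounding the dimension by one. Nonvanishing is produced by constructing an explicit trilinear form, either from the open-cell intertwining integral or by unpacking the matrix-coefficient computation of Theorem~\ref{sc}.

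\textbf{Case 2: $v\nmid 3$, $\sigma_v$ supercuspidal.} Exploit the fact that the cubic Kubota cocycle is trivial on the maximal compact $K_v=\GL_2(\mathcal{O}_v)$, so $\sigma_v\cong\cind_J^{G(F_v)}\rho$ for a cuspidal type $(J,\rho)$ with $J\subset K_v$. Frobenius reciprocity reduces the Hom-space to $\Hom_J(\rho,\Pi_v|_J)$. A Mackey decomposition of $\Pi_v|_J$--equivalently of $I(\chi_{\pm,v})|_{\wt J}$--together with the cuspidality of $\rho$ (which forces its support to avoid the boundary strata) collapses the sum to a single contribution, giving dimension exactly one.

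\textbf{Main obstacle.} The technical heart is carrying the Kubota cocycle $\sigma_K^{(c)}$ through the Mackey filtration cleanly: the diagonal embedding $G(F_v)\hookrightarrow\wt G(F_v)\times\wt G(F_v)$ is not given by the section $s$ on the whole group, and the cocycle's contribution must be tracked in each Bruhat stratum in order to arrive at the clean character formulas above. A secondary subtlety is the potential reducibility of $I(\chi_{\pm,v})$ at the exceptional parameters: one has to verify that passing from the full induced representation to its quotient $\sigma_{\pm,v}$ does not annihilate the Jacquet-module contribution that realizes the trilinear form. This is precisely why the case $v\mid 3$ with $\sigma_v$ supercuspidal is excluded: in that regime the subquotient structure of the degenerate principal series, together with the non-triviality of the cocycle on supercuspidal types, is no longer controlled by the above Mackey analysis.
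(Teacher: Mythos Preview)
Your Case~1 outline has the right first move---Frobenius reciprocity reduces to computing $\Hom_T\bigl((\sigma_+\otimes\sigma_-)_N,\mu\delta^{1/2}\bigr)$---but your description of $(\sigma_+\otimes\sigma_-)_N$ as a ``two-step Bruhat filtration'' with character pieces built from $\chi_+\chi_-$ and $\chi_+\cdot{}^w\chi_-$ is not correct. The Jacquet module of a tensor product is not obtained by a naive Bruhat-cell argument on $I(\chi_+)\otimes I(\chi_-)$. The paper's controlling exact sequence (Proposition~\ref{Jac of tensor}) is
\[
0\lra i_{\wt{Z}N}^{\wt{B}}\bigl(\sigma_{+,\psi}\otimes\sigma_{-,\bar\psi}\bigr)\lra (\sigma_+\otimes\sigma_-)_N\lra \sigma_{+,N}\otimes\sigma_{-,N}\lra 0,
\]
coming from Kirillov theory (Lemma~\ref{Kiri}). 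The term that actually produces the trilinear form is the compactly induced piece built from the \emph{twisted} Jacquet modules $\sigma_{\pm,\psi}$, and the reason the dimension is exactly $1$ is that $\sigma_{+,\psi}$ is an \emph{irreducible} $\wt{Z}$-representation (last item of Proposition~\ref{Jac}). This irreducibility is a special feature of the exceptional representation and is invisible to any Bruhat-cell bookkeeping; indeed, carrying out the same computation with the full principal series $\pi_\pm$ in place of $\sigma_\pm$ gives $\dim\Hom_{\wt{Z}}(\pi_{+,\psi},\pi_{-,\bar\psi}^\vee\otimes\theta^{-1})=9|3|^{-1}$, so your step ``at most one of the two pieces contributes a nonzero map'' already fails for the objects $I(\chi_{+,v})\otimes I(\chi_{-,v})$ you propose to start from. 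The passage to the special representation (Proposition~\ref{St}) also requires the upper bound of Proposition~\ref{ub} and an Euler--Poincar\'e argument, not just an embedding $\sigma_v^\vee\hookrightarrow I_B^G\mu$.

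Your Case~2 outline is too optimistic. Writing $\sigma=\cind_{\CK}^G\tau^\vee$ and applying Frobenius does reduce the problem to $\Hom_{\wt{\CK}}(\sigma_+\otimes\sigma_-,\tau)$, but ``cuspidality of $\rho$ forces support to avoid boundary strata'' does not by itself collapse this to a single term. The paper's mechanism is quite different and constitutes its technical core: one first proves the upper bound $\dim\Hom_G(\sigma\otimes\pi_+\otimes\sigma_-,\BC)=3$ with the full principal series $\pi_+\supset\sigma_+$ (Proposition~\ref{ub}), and then carries out an explicit $\wt{\CK}$-type decomposition of both $\pi_\pm$ and $\sigma_\pm$ (Subsection~\ref{K-type Section}, culminating in Theorem~\ref{K-type}). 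The point is that every irreducible $\wt{\CK}$-constituent $V_k$ of depth $\geq 2$ appears with multiplicity~$3$ in $\pi_\pm|_{\wt{\CK}}$ but with multiplicity~$1$ in $\sigma_\pm|_{\wt{\CK}}$, while Lemma~\ref{very cuspidal} kills the small-depth contributions against $\tau$. The equation $3=3\cdot 1\cdot\dim\Hom_{\wt{\CK}}(V_k^+\otimes V_{k'}^-,\tau)$ then pins down a unique contributing pair, and replacing multiplicity $3$ by $1$ on the $\sigma_+$ side gives the answer. This interplay between the $\wt{ZK}$- and $\wt{J}$-restrictions is what your Mackey sketch is missing, and it is also the real reason the argument requires $|3|=1$: the Kubota splitting on all of $K$ is needed for the $K$-type analysis, not merely to trivialize the cocycle on a type.
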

 
 We believe the above theorem  holds even when $\sigma_v$ is supercuspidal and   $v\mid 3$. We postpone a brief explanation of 
 our proof to the end of this introduction.  Our method is actually applicable to study the local branching problem for general $n$-th covering of $\GL_2$. We plan to return to this topic in the near future. We also remark that Patel and Prasad considered the branching problem for metaplectic groups in other circumstances (see e.g. \cite{PP16,PP18}).

  Our second result concerns the canonical local period and the unramified calculation. Note that in the archimedean case, the situation reduces to the usual triple product case (see Subsection \ref{Archimedean} for details).  We shall only consider the nonarchimedean case in the following. 
  
\begin{thm}[Theorem \ref{Canonical local period} and Theorem \ref{unramified calculation}]
\label{Local-Unr}
Let $v$ be a nonarchimedean place of $F$. For $*=\emptyset,\pm$, take a non-degenerate $\wt{G}(F_v)$-invariant pairing $(-,-)_{*,v}$ on $\sigma_{*,v}$. When  $v\nmid 3$ or $\sigma_v$ is non-supercuspidal, for 
$f_{*,v} \in \sigma_{*,v}$ and $
 f_{*,v}^\vee \in \sigma_{*,v}^\vee$,  the integration of matrix coefficients
\[\int_{Z^{(c)}(F_v)\bs G(F_v)}( \sigma_v(g_v)f_v, f_v^\vee)_v( \sigma_{+,v}(g_v)f_{+,v}, f_{+,v}^\vee)_{+,v} ( \sigma_{-,v}(g_v)f_{-,v}, f_{-,v}^\vee)_{-,v} d g_v,\]
 denoted by $I(f_v\otimes f_{+,v}\otimes f_{-,v}, f_v^\vee\otimes f_{+,v}^\vee\otimes f_{-,v}^\vee)$, 
is  a generator of \[\Hom_{G(F_v)}(\sigma_v\otimes\sigma_{+,v}\otimes \sigma_{-,v},\BC)\times \Hom_{G(F_v)}(\sigma_v^\vee\otimes\sigma^\vee_{+,v}\otimes \sigma^\vee_{-,v},\BC).\] 
Moreover, assume that 
\begin{itemize}
\item $\sigma_v=I_{B(F_v)}^{G(F_v)}\theta_v$ and $\sigma_{\pm,v}$ are all unramified,
\item $f_v,f_v^\vee, f_{\pm,v},f_{\pm,v}^\vee$ are all spherical and $( f_v, f_v^\vee)_v=( f_{+,v}, f_{+,v}^\vee)_{+,v} =( f_{-,v}, f_{-,v}^\vee)_{-,v}=1$.
\end{itemize}
Then up to a normalization of the measure $d g_v$,
\begin{align*}
    I(f_v \otimes f_{v,+}\otimes f_{v,-}, f_v^\vee \otimes f_{+,v}^\vee\otimes f_{-,v}^\vee)=\frac{L(1/2,\sigma_v,\Sym^3)}{L(1,\sigma_v,\ad)}.
\end{align*}
\end{thm}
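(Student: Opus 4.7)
The proof plan has two stages.

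\emph{Stage 1 (canonical local period).} First I would verify absolute convergence of $I$. The exceptional theta representations $\sigma_{\pm,v}$ are unitary with matrix coefficients of controlled growth on $Z^{(c)}(F_v)\backslash G(F_v)$ (coming from the fact that $\chi_\pm$ are unitary on the relevant torus), and $\sigma_v$ is unitary by hypothesis. Writing $G(F_v) = N(F_v)T(F_v)K$ via the Iwasawa decomposition and tracking $\delta_B$-weighted integrals over the torus, the convergence reduces to a standard estimate. Once this is established, $I$ defines a $G(F_v)$-invariant bilinear form, hence an element of
\[
\Hom_{G(F_v)}(\sigma_v\otimes\sigma_{+,v}\otimes\sigma_{-,v},\BC)\otimes \Hom_{G(F_v)}(\sigma_v^\vee\otimes\sigma_{+,v}^\vee\otimes\sigma_{-,v}^\vee,\BC).
\]
By Theorem \ref{Mulone} each factor is one-dimensional, so it suffices to show that $I$ is non-zero for one choice of vectors. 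In the unramified case this non-vanishing will be visible from the explicit $L$-ratio computed in Stage 2. For arbitrary non-supercuspidal $\sigma_v$ with $v\nmid 3$, I would propagate the non-vanishing by realizing $\sigma_v$ as a subquotient of an induced representation, treating $I$ as a meromorphic family in the inducing parameter, and using uniqueness of the trilinear functional to transport non-vanishing from generic unramified parameters to the point of interest.

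\emph{Stage 2 (unramified computation).} The strategy is classical: bi-$K$-invariance of all spherical matrix coefficients collapses the $K$-integral, and the Cartan decomposition $G(F_v)=KA^+K$ reduces $I$ to a weighted sum over the dominant torus. For representatives $a_n=\diag(\varpi^n,1)$ of $Z^{(c)}(F_v)T(\CO_{F_v})\backslash T(F_v)$ the summand takes the shape
\[
c(a_n)\,\Phi_{\sigma_v}(a_n)\,\Phi_{\sigma_{+,v}}(a_n)\,\Phi_{\sigma_{-,v}}(a_n),
\]
where $\Phi_{\pi}$ denotes the normalized spherical matrix coefficient of $\pi$ and $c(a_n)$ collects the $\delta_B^{-1}$ factor and the Cartan-measure coefficient. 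For $\sigma_v=I_B^G\theta_v$ the Macdonald formula expresses $\Phi_{\sigma_v}$ in terms of the Satake parameters $\alpha,\beta$ of $\theta_v$; for the exceptional theta representations $\sigma_{\pm,v}$ on the cubic cover, the analogous metaplectic spherical formula (obtained from the metaplectic Casselman-Shalika computation of Kazhdan-Patterson and McNamara, or from an explicit intertwining operator onto the theta quotient) expresses $\Phi_{\sigma_{\pm,v}}(a_n)$ in terms of the cube-root Satake data dictated by $\chi_\pm$. Substituting these formulas and summing the resulting geometric-type series in $q^{-n}$ produces a product in which the numerator assembles the four factors $\prod_{i=0}^{3}(1-\alpha^{3-i}\beta^{i}q^{-1/2})^{-1}=L(1/2,\sigma_v,\Sym^3)$ and the denominator yields $L(1,\sigma_v,\ad)$. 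The central-character identity $\omega\omega_+\omega_-=1$ ensures the integrand is genuinely $Z^{(c)}(F_v)$-invariant so the sum makes sense, and the residual measure constant is absorbed in the phrase ``up to a normalization of $dg_v$.''

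The main obstacle I expect is the explicit spherical matrix coefficient of $\sigma_{\pm,v}$. Unlike for the full unramified principal series, $\sigma_{\pm,v}$ is a specific irreducible quotient of an induced representation on the cubic cover, so the spherical vector must be identified via the intertwining operator, with careful tracking of the relevant residue, or via the spherical Whittaker function together with a metaplectic Casselman-Shalika identity. The subsequent bookkeeping, in which the cube-root Satake data of $\sigma_{\pm,v}$ must combine cleanly with $\alpha,\beta$ to produce exactly the four factors of $L(1/2,\sigma_v,\Sym^3)$ and the two factors of $L(1,\sigma_v,\ad)$ with no spurious corrections, will require care but is expected to follow the pattern of prior metaplectic Rankin-Selberg computations.
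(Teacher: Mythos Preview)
Your Stage~1 has a genuine gap: the theorem asserts that $I$ is a generator whenever $v\nmid 3$ \emph{or} $\sigma_v$ is non-supercuspidal, but your non-vanishing argument only treats the unramified case and then proposes to deform along the inducing parameter to reach other non-supercuspidal $\sigma_v$. This leaves the supercuspidal case with $v\nmid 3$ untouched, and there a meromorphic-family argument is not available since supercuspidals are not subquotients of principal series. Even in the principal-series range, ``generic non-vanishing plus multiplicity one'' does not by itself rule out $I$ vanishing at an isolated parameter; you would still need to exhibit a nonzero value at the specific point of interest.

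The paper sidesteps both issues by a different mechanism. It realizes the pairings on $\sigma_v,\sigma_{+,v}$ in Whittaker models and the pairing on $\sigma_{-,v}$ via the induced model of $\pi_{-,v}=I_{\wt{B_*}}^{\wt{G}}\chi_-^w$, and proves (adapting \cite[Prop.~5.2]{Hsi21}) a decomposition
\[
I=\zeta_{F_v}(1)\sum_{t\in \wt{T_*}\bs \wt{T^\prime}}\Psi_t\cdot\Psi_t^\vee,
\]
where each $\Psi_t$ is a Rankin--Selberg-type zeta integral on $Z N\bs G$ (Proposition~\ref{Tri-I}). The $\Psi_t$ are shown to form a basis of $\Hom_G(\sigma_v\otimes\sigma_{+,v}\otimes\pi_{-,v},\BC)$ by constructing, for each $t'$, explicit test vectors $(W_{t'},W_{+,t'},f_{-,t'})$ with $\Psi_t(W_{t'},W_{+,t'},f_{-,t'})\neq 0$ iff $t=t'$ (using that $i_{\wt{Z_*}N}^{\wt{B}}\omega_+\boxtimes\psi$ sits inside the Kirillov model of $\sigma_{+,v}$). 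Non-vanishing of $I$ then follows for \emph{every} $\sigma_v$ covered by the multiplicity-one theorem: pick $t$ so that $\Psi_t$ restricts nontrivially to $\sigma_{-,v}\subset\pi_{-,v}$, choose vectors making $\Psi_t\neq 0$, and pick dual vectors isolating that single $t$ in the sum. No deformation is needed, and supercuspidal $\sigma_v$ is handled on the same footing.

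Your Stage~2 is a legitimate alternative route but not the one taken in the paper, and the obstacle you flag is real. Rather than compute the spherical matrix coefficient of the exceptional representation (for which no ready Macdonald-type formula is quoted), the paper uses a second decomposition (Proposition~\ref{Tri-II}), valid when $\sigma_v$ is itself induced: putting $\sigma_{\pm,v}$ in their Whittaker models and $\sigma_v$ in its induced model, one gets simply
\[
I=\zeta_{F_v}(1)\,\Psi(W_+,W_-,f)\,\Psi^\vee(W_+^\vee,W_-^\vee,f^\vee),
\]
with $\Psi$ a single $ZN\bs G$-integral. For spherical data this collapses, via Iwasawa, to a torus sum of $W_{+}W_{-}f$ evaluated at $\diag(\varpi^m,1)$, and the spherical \emph{Whittaker} values of $\sigma_{\pm,v}$ are supplied directly by \cite[Thm.~I.4.2]{KP84} together with the explicit description of $(V_0(\chi))_\psi^\vee$ (formula~(\ref{cons})). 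Summing the resulting geometric series gives the $L$-ratio. So the paper trades your Cartan/Macdonald computation for a Rankin--Selberg unfolding in which the required metaplectic input is the Whittaker function, which is already available in \cite{KP84}.
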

In Theorem \ref{Local-Unr},  we obtain that the integration of matrix coefficients  is a generator  by adapting the method in \cite[Proposition 5.2]{Hsi21} to  decompose $I$ into the product of two local co-period integrals (see Proposition \ref{Tri-I} and \ref{Tri-II} for details) assuming the $\wt{G}(F_v)$-pairings are realized explicitly in the Whittaker model and the induced model (see Subsection \ref{InvP} for details).  The local co-period integral can be studied analogously to the Rankin-Selberg zeta integral. With this decomposition formula, the result on the unramified calculation  follows immediately from the explicit description of the spherical Whittaker function in \cite[Theorem I.4.2]{KP84}.

With all these local preparations, we study the  global co-period integral 
 \[\int_{Z_*(\BA)G(F)\bs G(\BA)}f f_+ f_-(g)d^\tau g,\quad f\in\sigma,\ f_\pm\in\sigma_\pm.\]
 Here $Z_*(\BA)=Z(\BA)\cap T_*(\BA)$ and  we choose the
  Haar measure $d^\tau g$  such that 
$\Vol(G(F)Z(\BA)\bs G(\BA),d^\tau g)=2$. 
In particular, we shall consider its relation with the central  symmetric cube $L$-value $L(1/2,\sigma,\Sym^3)$.

Our first global result is an immediate consequence of Theorem \ref{Mulone}. 
\begin{thm}\label{Non-vanishing} Let $S$ be a non-empty finite set of non-archimedean $F$-places disjoint with those places dividing $3$. For each $v\in S$, fix a supercuspidal $G(F_v)$-representation $\sigma_v^\circ$. Then there exists a cuspidal automorphic $G(\BA)$-representation $\sigma$ such that $\sigma_v=\sigma_v^\circ$ for all $v\in S$ and there exist automorphic forms $f\in\sigma$, $f_\pm\in\sigma_\pm$ such that $\CP(f,f_+,f_-)\neq0$. In particular, there are infinitely many cuspidal automorphic $G(\BA)$-representations $\sigma$ with $L(1/2,\sigma,\Sym^3)\neq0$.
\end{thm}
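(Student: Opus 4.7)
The last sentence (infinitely many cuspidal $\sigma$ with $L(1/2,\sigma,\Sym^3)\neq 0$) will follow from the first assertion by invoking the Ginzburg--Jiang--Rallis implication recalled in the introduction --- non-vanishing of the co-period forces non-vanishing of the central symmetric cube $L$-value --- together with the observation that varying $S$ (or, for a fixed $v\in S$, varying the supercuspidal $\sigma_v^\circ$ through an infinite family, using density of supercuspidals) produces globalizations with pairwise distinct local components, hence pairwise distinct global $\sigma$. The real content is therefore the first assertion: the existence of a cuspidal $\sigma$ with $\sigma_v\cong\sigma_v^\circ$ at every $v\in S$ and $\CP(f,f_+,f_-)\neq 0$ for some test data.

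The plan is to realize $\CP$ as an $L^2$-pairing and then use Theorem \ref{Mulone} to isolate a cuspidal contribution. Since the central characters satisfy $\omega\,\omega_+\omega_-=1$, the product $\Phi:=f_+\cdot f_-$ descends to an automorphic function on $G(F)Z_*(\BA)\bs G(\BA)$, and
\[
\CP(f,f_+,f_-)=\int_{G(F)Z_*(\BA)\bs G(\BA)} f(g)\,\Phi(g)\,d^\tau g,
\]
so the goal is to show that the cuspidal projection of $\Phi$ onto some appropriate isotypic subspace is non-zero. For each $v\in S$, supercuspidality of $\sigma_v^\circ$ provides (by the standard result of Kazhdan) a Hecke element $e_v\in\CH(G(F_v))$ whose action on any admissible $G(F_v)$-module is the projection onto the $\sigma_v^\circ$-isotypic. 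Set $e:=\bigotimes_{v\in S}e_v\otimes\bigotimes_{v\notin S}\mathbf{1}_{K_v}$ and consider $e\cdot\Phi$. Every irreducible constituent of the residual or continuous spectrum of $L^2(G(F)Z_*(\BA)\bs G(\BA))$ is, locally at $v$, a constituent of a representation parabolically induced from the Borel, hence has no supercuspidal subquotient; thus $e_v$ annihilates all such constituents. Therefore $e\cdot\Phi$ lies entirely in the cuspidal spectrum and decomposes over cuspidal $\sigma$ with $\sigma_v\cong\sigma_v^\circ$ for every $v\in S$.

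By Theorem \ref{Mulone}, the contribution of each such $\sigma$ is controlled at every place by the unique (up to scalar) local trilinear form, which is non-zero by one-dimensionality of the local $\Hom$-space. Choosing $(f_+,f_-)$ as a pure tensor whose local components at each $v\in S$ pair non-trivially with $\sigma_v^\circ$ under the local form, and whose components at $v\notin S$ are spherical (so Theorem \ref{Local-Unr} can be invoked with generic unramified $\sigma_v$), one produces data for which $e\cdot\Phi\neq 0$. This forces the existence of some cuspidal $\sigma$ with the prescribed supercuspidal local shape and $\CP\neq 0$; combined with Ginzburg--Jiang--Rallis this yields the $L$-value conclusion. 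The main obstacle is the verification that $e\cdot\Phi\neq 0$ for some admissible choice: a priori all cuspidal projections could cancel. This would be overcome by a density argument, exploiting that as $(f_+,f_-)$ ranges over $\sigma_+\otimes\sigma_-$, the resulting $\Phi$ sweep out a $\wt G(\BA)\times\wt G(\BA)$-invariant subspace of automorphic functions on which the local trilinear forms --- being non-zero by Theorem \ref{Mulone} --- cannot simultaneously vanish against every cuspidal $\sigma$ of the prescribed local type.
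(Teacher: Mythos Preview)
Your strategy is in the right spirit, but the step you yourself flag as ``the main obstacle'' is a genuine gap, and the ``density argument'' offered does not close it. Knowing that $\Hom_{G(F_v)}(\sigma_{+,v}\otimes\sigma_{-,v},\sigma_v^{\circ,\vee})\neq 0$ for each $v\in S$ tells you only that $\sigma_v^\circ$ is a quotient of the \emph{abstract} local tensor product; it says nothing about whether $\sigma_v^\circ$ survives in the image of the global multiplication map $\sigma_+\otimes\sigma_-\to C^\infty(G(F)Z_*(\BA)\bs G(\BA))$. Your final sentence (``cannot simultaneously vanish against every cuspidal $\sigma$ of the prescribed local type'') presupposes that such cuspidal $\sigma$ occur in the spectral support of the $\Phi$'s, which is exactly the statement to be proved. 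There is also a secondary analytic issue: $\Phi=f_+f_-$ is a priori only in $L^1$, not $L^2$, so one must be careful about what ``spectral decomposition of $\Phi$'' and ``$e\cdot\Phi$'' mean.

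The paper bypasses all of this by invoking the Burger--Sarnak principle in the form of Prasad's lemma \cite[Lemma~1]{Pra07}, applied to the pair $\big(\Delta\mu_3\bs\wt G(\BA)\times\wt G(\BA),\,\GL_2(\BA)\big)$: once Theorem~\ref{Mulone} gives $\sigma_v^\circ$ as a local quotient of $\sigma_{+,v}\otimes\sigma_{-,v}$, Prasad's lemma directly produces a cuspidal automorphic $\sigma$ with $\sigma_v\cong\sigma_v^\circ$ for $v\in S$ and $\CP(f,f_+,f_-)\neq 0$. The content of that lemma is precisely the passage from local distinction to global non-vanishing that your sketch leaves open; making your projector argument rigorous would amount to reproving it. Your deduction of the final sentence from \cite{GJR01} and the infinitude of supercuspidals at a fixed $v\in S$ matches the paper.
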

\begin{proof}By Theorem \ref{Mulone}, one deduces that for any $v \in S$,
$\sigma_v^\circ$ appears as a quotient of $\sigma_{+,v}\otimes\sigma_{-,v}$. Now apply the Burger-Sarnak type principle \'a la Prasad to the pair  $(\Delta\mu_3\bs \wt{G}(\BA)\times\wt{G}(\BA), \GL_2(\BA))$ (see \cite[Lemma 1]{Pra07}\footnote{In \textit{loc.cit},  the larger group  is assumed to be reductive, but the proof carries over verbatim.}).  We deduce that  
there exist a cuspidal automorphic $G(\BA)$-representation $\sigma$ such that $\sigma_v=\sigma_v^\circ$ for all $v\in S$ and there are automorphic forms $f\in\sigma$, $f_\pm\in\sigma_\pm$ such that $\CP(f,f_+,f_-)\neq0$. As $\CP(f,f_+,f_-)\neq0$ implies $L(1/2,\sigma,\Sym^3)\neq0$
by \cite{GJR01}, we deduce that there are infinitely many $\sigma$ with $L(1/2,\sigma,\Sym^3)\neq0$ from the infinitude of $\sigma_v^\circ$.
\end{proof}
\begin{remark}Our result on the non-vanishing of $L(1/2,\sigma,\Sym^3)$ is complementary to \cite{HJL23} and the methods are completely different. In \textit{loc.cit},   the conductor of $\sigma$ are required to divide $3$ (thus no supercuspidal local components) and the author deduces the non-vanishing result from an approximating result of certain weighted average of the co-period.
\end{remark}
Our second global result is an  Ichino-Ikeda type conjecture for  cuspidal automorphic $\GL_2(\BA)$-representations. Assume $\sigma$ is cuspidal. Consider the Petersson inner product  \[(-,-): \sigma\times\sigma^\vee\to\BC,\quad  (f,f^\vee)\to\int_{G(F)Z(\BA)\bs G(\BA)}f(g)f^\vee(g)d^\tau g\]
and the Petersson inner product 
 \[(-,-)_\pm: \sigma_\pm\times\sigma_\pm^\vee\to\BC,\quad  (f_\pm,f_\pm^\vee)\to\int_{G(F)Z_*(\BA)\bs G(\BA)}f_\pm(g)f_\pm^\vee(g)d^\tau g.\]
Fix a decomposition $(-,-)_*=\prod_v(-,-)_{*,v}$ for $*=\emptyset,\pm$. Fix a decomposition $d^\tau g=\prod_v d^\tau g_v$. For any place $v$ of $F$, any $f_{v,*}^\prime\in\sigma_{v,*}^\prime$ with $*=\emptyset,\pm$ and $\prime=\emptyset,\vee$, set 
\[I^\sharp(f_v\otimes f_{+,v}\otimes f_{-,v},f_v^\vee\otimes f_{+,v}^\vee\otimes f_{-,v}^\vee)
=\frac{L(1,\sigma_v,\ad)}{L(1/2,\sigma_v,\Sym^3)}I(f_v\otimes f_{+,v}\otimes f_{-,v},f_v^\vee\otimes f_{+,v}^\vee\otimes f_{-,v}^\vee).\]
For $f_*^\prime\in\sigma_*^\prime$ and $\prime=\emptyset,\vee$, set \[\CP^\prime(f^\prime,f_+^\prime,f_-^\prime)=\int_{Z_*(\BA)G(F)\bs G(\BA)}f^\prime f_+^\prime f^\prime_-(g)d^\tau g.\]
\begin{conj}\label{co-period-conj}The co-period  trilinear  form $\CP\neq0$ if and only if $L(1/2,\sigma,\Sym^3)\neq0$.
Moreover there exists a non-zero constant $C\in\BQ^\times$ independent of $\sigma$ such that  for any pure tensor $f_*^\prime=\otimes f_{v,*}^\prime\in\sigma_*^\prime=\otimes_{\BC[\mu_3]}^\prime \sigma_{v,*}^\prime $ with $*=\pm,\emptyset$ and $\prime=\emptyset,\vee$, 
\[\CP(f,f_+,f_-)\CP^\vee(f^\vee,f_+^\vee,f_-^\vee)=C\frac{L(1/2,\sigma,\Sym^3)}{L(1,\sigma,\ad)}\prod_v I^\sharp(f_v\otimes f_{v,+}\otimes f_{v,-},f_v^\vee\otimes f_{v,+}^\vee\otimes f_{v,-}^\vee).\]
\end{conj}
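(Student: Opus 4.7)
The plan is to follow the blueprint of Ichino's proof of the triple product formula for $\GL(2)$, adapted to the present cubic metaplectic setting. The core idea is to lift the identity from the Eisenstein level, where the authors have already established an Euler decomposition of co-period integrals, to the cuspidal level by a residue argument, with the local multiplicity one result (Theorem \ref{Mulone}) forcing the factorized structure on the right-hand side globally.

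More concretely, I would exploit the fact that $\sigma_\pm$ themselves arise as residues of Eisenstein series induced from the exceptional characters $\chi_\pm$, so the cuspidal co-period $\CP(f, f_+, f_-)$ can be realized as a double residue of an integral $\CP(f, E(\chi_+, s_1), E(\chi_-, s_2))$ involving honest Eisenstein series in place of the theta residues. For this Eisenstein-level integral the Euler decomposition is known, and after carefully taking the double residue one should obtain an expression whose local factors at almost all places are computed by Theorem \ref{Local-Unr} to be $L(1/2, \sigma_v, \Sym^3)/L(1, \sigma_v, \ad)$, thereby producing the partial $L$-ratio in the conjectured formula. Multiplicity one then forces the finitely many remaining ramified factors to agree with $I^\sharp_v$ up to a single global constant $C$, since both sides of the conjectured identity, after being tensored with their duals, lie in a one-dimensional space of global bilinear forms.

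The non-vanishing equivalence falls out of this factorization: the implication $\CP \neq 0 \imp L(1/2, \sigma, \Sym^3) \neq 0$ is already supplied by \cite{GJR01}, while the converse follows from the factorization together with the non-triviality of each local $I^\sharp_v$, which is guaranteed by Theorem \ref{Mulone} wherever it applies. The main obstacles I expect are threefold: (i) the residue extraction from the Eisenstein level requires a careful regularization of potentially divergent double integrals together with a justification of exchanging integration with the iterated residue at the two points $(s_1, s_2)$ where $E(\chi_\pm, s)$ has its simple pole; (ii) at supercuspidal components of $\sigma$ over places dividing $3$, where Theorem \ref{Mulone} has not yet been established, one must either extend the local multiplicity one result or work around the remaining local ambiguity by proving an ad hoc compatibility; and (iii) pinning down the rational constant $C$ and showing it is independent of $\sigma$ will demand delicate bookkeeping of measure normalizations, Tamagawa conventions, and the residue constants extracting $\sigma_\pm$ from the degenerate Eisenstein series, in the spirit of Ichino's original measure-theoretic computation for the triple product.
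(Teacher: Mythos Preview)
The statement you are attempting to prove is a \emph{conjecture} in the paper, not a theorem; there is no proof in the paper to compare your proposal against. The authors formulate Conjecture~\ref{co-period-conj} for cuspidal $\sigma$ and offer as evidence the Eisenstein analogue (Theorem~\ref{Eisen}), the local multiplicity-one result, and the unramified calculation, together with the one-directional implication $\CP\neq 0\Rightarrow L(1/2,\sigma,\Sym^3)\neq 0$ from \cite{GJR01}. They do not claim a proof of the cuspidal case.

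That said, your proposed strategy has a genuine gap beyond the obstacles you already flag. You assert that for the deformed integral $\CP(f,E(\chi_+,s_1),E(\chi_-,s_2))$ with $f$ cuspidal ``the Euler decomposition is known''. It is not: the paper's Euler decomposition (Theorem~\ref{co-period for Eisenstein family} and Theorem~\ref{EisenII}) is established for the \emph{opposite} configuration, where $\sigma$ is an Eisenstein series and $\varphi_\pm$ already lie in the theta residues $\sigma_\pm$. No analogous decomposition is proved (or even stated) for your deformed triple. More seriously, at generic $(s_1,s_2)$ the representations replacing $\sigma_\pm$ are the full principal series $\pi_\pm=V(\chi_\pm^w\delta^{s_\pm})$, and Proposition~\ref{ub} shows that $\dim\Hom_{G(F_v)}(\sigma_v\otimes\sigma_{+,v}\otimes\pi_{-,v},\BC)=3|3|_v^{-1/2}$, not $1$. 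So local multiplicity one, which is the engine of your factorization argument, is unavailable off the exceptional point; you cannot invoke it at generic $(s_1,s_2)$ and then specialize. Any Euler product you might obtain by unfolding $E(\chi_+,s_1)$ would have local factors living in a higher-dimensional Hom space, and there is no a priori reason that the residue at the exceptional point selects precisely the one-dimensional quotient corresponding to $\sigma_+\otimes\sigma_-$ in a way compatible with your global identity.

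In short: the paper leaves this open, and your outline does not close it. The residue approach you sketch is natural, but it would require (a) producing an Euler decomposition for the deformed integral from scratch, and (b) a mechanism, other than multiplicity one at generic $s$, to control how the residue interacts with the local factorization.
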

\begin{remark} 
We give the following remarks.
\begin{itemize}
 \item Note that the co-period $\CP$ is nonvanishing if and only if the co-period $\CP^\vee$ for dual representations is nonvanishing.
 \item In the paper \cite{GJR01}, the implication $\CP\neq0\implies L(1/2, \sigma,\Sym^3)\neq0$ is established. In \textit{loc.cit}, the authors mentioned  that the opposite implication should also hold and also the possibility for an formula relating co-period and the central symmetric cube L-value. Note that the co-period in \cite{GJR01} is defined as an integration over $G(F) \bs G(\BA)^1$ which is slightly different from here. Since $G(F) Z_*(\BA) \bs G(\BA) = G(F)(Z_*(\BA) \cap G(\BA)^1) \bs G(\BA)^1$, the difference is just
 the (finite) volume of $(Z_*(\BA) \cap G(F)) \bs (Z_*(\BA) \cap G(\BA)^1)$.
 \item In the paper \cite{HJL23}, the authors formulate a weaker form of the above conjecture assuming $F=\BQ(\mu_3)$, the conductor of $\sigma$ divides $3$ and  a more strict central character condition.
 \end{itemize}
\end{remark}
\begin{remark}Let $\pi_i$, $i=1,2,3$ be irreducible cuspidal automorphic $G(\BA)$-representations with central
characters $\omega_i$, $i=1,2,3$ and assume $\omega_1\omega_2\omega_3 = 1$. 
The Ichino formula \cite{Ich08} relates the period integral 
\[\int_{Z(\BA)G(F)\bs G(\BA)}\phi_1\phi_2\phi_3(g)d^\tau g,\quad \phi_i\in\pi_i\]
to the central value $L(1/2,\pi_1\times\pi_2\times\pi_3)$ of the triple product $L$-function. When $\pi_1=\pi_2=\pi_3$ with central character $\omega$, one has \[L(s,\pi\times\pi\times\pi)=L(s,\pi,\Sym^3)L(s,\pi\otimes\omega)^2.\] 
This allows one to study $L(1/2,\pi,\Sym^3)$ via Ichino formula when $L(1/2, \pi\otimes\omega)\neq0$. When $L(1/2, \pi\otimes\omega)=0$, Conjecture \ref{co-period-conj} seems inevitable.
\end{remark}
Our final result concerns  the co-period integral in the Eisenstein case, that is, $\sigma$ is spanned by the 
Eisenstein series $E(-,\theta,f)$ with $f\in I_{B(\BA)}^{G(\BA)}\theta$ for some character $\theta:\ T(F)\bs T(\BA)\to\BC^\times$. In this case, 
we consider the following inner product on $\sigma\times\sigma^\vee$:
\[\left(E(-,\theta,f),E(-,\theta^{-1},f^\vee)\right)\mapsto \int_K ff^\vee(k)dk\]
where $K\subset G(\BA)$ is the standard maximal open compact subgroup and $dk$ is the Haar measure on $K$ with total volume one. As the naive co-period integral may diverge in the Eisenstein case, we apply the mixed truncation operator considered by Yamana in the trilinear case \cite{Yam18} to do regularization (see Theorem \ref{co-period for Eisenstein family} and Proposition \ref{EisenI} for details). Let $\lambda(\theta)$  be the real number such that $\theta\delta^{-\lambda(\theta)}$ is unitary. By an unfolding argument, we obtain the following result.
\begin{thm}[Theorem \ref{EisenII}] \label{Eisen} When $-1/6<\lambda(\theta)<1/6$, for $f_*^\prime=\otimes f_{v,*}^\prime\in\sigma_*^\prime=\otimes_{\BC[\mu_3]}^\prime \sigma_{v,*}^\prime $ with $*=\pm,\emptyset$ and $\prime=\emptyset,\vee$, 
\[\CP(f,f_+,f_-)\CP^\vee(f^\vee,f_+^\vee,f_-^\vee)=\frac{L(1/2,\sigma,\Sym^3)}{L(1,\sigma,\ad)}\prod_v I^\sharp(f_v\otimes f_{v,+}\otimes f_{v,-},f_v^\vee\otimes f_{v,+}^\vee\otimes f_{v,-}^\vee).\]
\end{thm}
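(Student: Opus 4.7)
The plan is to compute $\CP(E(-,\theta,f), f_+, f_-)$ by a direct unfolding of the Eisenstein series and then multiply by $\CP^\vee$ to match the factorized right-hand side. The regularization via the mixed truncation operator of Yamana \cite{Yam18} (already invoked in Theorem \ref{co-period for Eisenstein family} and Proposition \ref{EisenI}) replaces the a priori divergent integral with one that behaves formally like an integral over $Z_*(\BA)G(F)\bs G(\BA)$ and that depends meromorphically on $\lambda(\theta)$; in the strip $-1/6<\lambda(\theta)<1/6$ the relevant Eisenstein series is holomorphic and the truncation terms can be shown to vanish after unfolding. The output of the unfolding should be a global integral over $Z_*(\BA)B(F)\bs G(\BA)$ of $f(g)f_+(g)f_-(g)$, which is the Eisenstein analogue of the cuspidal co-period.

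\textbf{Unfolding and Euler decomposition.} First, I would replace $E(-,\theta,f)$ by its series expansion $\sum_{\gamma\in B(F)\bs G(F)}f(\gamma\cdot)$ and collapse the sum with the quotient $G(F)\bs G(\BA)$ to fold the domain down to $Z_*(\BA)B(F)\bs G(\BA)$. Using the Iwasawa decomposition $G(\BA)=N(\BA)T(\BA)K$ and the left $N(F)$-invariance, the next step is to insert the Fourier--Whittaker expansions of $f_+$ and $f_-$ along $N(F)\bs N(\BA)$. Here is where the exceptional theta series enters decisively: although $\sigma_\pm$ may carry several Whittaker functionals (since we allow all Kazhdan--Patterson cocycles), the product $f_+ f_-$ expands as a sum indexed by $T(F)/T(F)^3$ of Whittaker-type coefficients, and the full Fourier expansion of $f_+ f_-$ along $N$ is valid because $\sigma_+\otimes\sigma_-$ is a genuine (non-genuine, in fact, after tensoring) representation of $G(\BA)$. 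Combining this with the decomposition of $f$ in the induced model, I would obtain, after restricting to the generic term, an integral of the form $\int_{Z_*(\BA)N(\BA)\bs G(\BA)}f(g)W_+(g)W_-(g)\,dg$, which factorizes as an Euler product $\prod_v J_v$ by the uniqueness of the (twisted) Whittaker functional.

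\textbf{Matching and identification of the $L$-factor ratio.} At this point, the plan is to recognize each local factor $J_v$, via the explicit realization of the local invariant pairings in the induced/Whittaker models developed in Subsection \ref{InvP} and Propositions \ref{Tri-I}--\ref{Tri-II}, as one of the two factors occurring in the decomposition of the local matrix-coefficient integral $I$ from Theorem \ref{Local-Unr}. Multiplying the unfolded expression for $\CP$ by its counterpart for $\CP^\vee$ (obtained by the symmetric unfolding with $\theta^{-1}$) will pair up these two halves at each place into exactly $I(f_v\otimes f_{+,v}\otimes f_{-,v},f_v^\vee\otimes f_{+,v}^\vee\otimes f_{-,v}^\vee)$. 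The unramified calculation of Theorem \ref{Local-Unr} identifies the regularized Euler product of these $I_v$'s, normalized by the $L(1,\sigma_v,\ad)/L(1/2,\sigma_v,\Sym^3)$ factors that define $I^\sharp$, with $L(1/2,\sigma,\Sym^3)/L(1,\sigma,\ad)$ times the finite product $\prod_v I^\sharp_v$, delivering the claimed formula.

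\textbf{Main obstacle.} The principal difficulty is the bookkeeping around the mixed truncation: one must justify that the truncation correction terms produced when unfolding the Eisenstein series against the non-rapidly-decreasing product $f_+ f_-$ actually vanish (or are cancelled by the symmetric truncation on the $\CP^\vee$ side) in the range $-1/6<\lambda(\theta)<1/6$. This is where the explicit constant $1/6$ appears, and a careful analysis of the constant terms of $f_\pm$ along $B$---whose exponents are governed by the exceptional characters $\chi_\pm$---is needed to pin down the convergence strip. A secondary technical issue is to control the interaction between the several Whittaker functionals of $\sigma_\pm$ across places when writing the Euler product, using the fact that the Kubota cocycles split on $N(\BA)$ so that the genuine twists cancel in the product $f_+ f_-$. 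Once these two points are settled, the remainder of the argument is the standard Rankin--Selberg-type matching already put in place by Theorems \ref{Mulone} and \ref{Local-Unr}.
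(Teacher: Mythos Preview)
Your overall approach---unfold the Eisenstein series, extract the generic Whittaker term to obtain the global integral $\int_{Z_*(\BA)N(\BA)\bs G(\BA)}f(g)W_+(g)W_-(g)\,dg$, factorize as an Euler product, and then recombine with $\CP^\vee$ using the local decomposition $I_v=\zeta_{F_v}(1)\Psi_v\Psi_v^\vee$ of Proposition~\ref{Tri-II}---is exactly the route the paper takes (via Theorem~\ref{co-period for Eisenstein family} and Proposition~\ref{EisenI}). Your discussion of the truncation terms is also on target: the exponents of the boundary terms are computed from the constant-term exponents of $\sigma_\pm$ and are nonzero in the stated range.

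There is, however, a genuine gap in your matching step. The local integrals $\Psi_v$ produced by the unfolding, and hence the $I_v$ you assemble from $\Psi_v\Psi_v^\vee$, are computed with the \emph{Whittaker-model} pairings on $\sigma_{\pm,v}$. But $I^\sharp_v$ in the statement is defined using the local components of the \emph{Petersson} pairing $(f_\pm,f_\pm^\vee)\mapsto\int_{Z_*(\BA)G(F)\bs G(\BA)}f_\pm f_\pm^\vee\,d^\tau g$ (together with the measure $d^\tau g_v$). To pass from one to the other you need the explicit Petersson inner product formula for the exceptional theta series, Corollary~\ref{Petersson}, which is not a tautology: the paper proves it separately by specializing the co-period computation to the degenerate point $\theta=\delta^{1/2}$ and taking a residue. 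Without this ingredient your product $\prod_v I_v$ and the desired $\prod_v I^\sharp_v$ differ by an uncontrolled global constant (coming both from the Whittaker-to-Petersson ratio and from the $dg$-to-$d^\tau g$ measure change), and the formula cannot be closed up. You should insert this step explicitly.

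A smaller point: your worry about ``interaction between several Whittaker functionals'' is handled in the paper not by cancellation of genuine twists in $f_+f_-$, but by the observation (Proposition~\ref{Fourier coefficients}) that the $\wt{Z_*}(\BA)$-equivariance of the global $\psi$-Fourier coefficient forces the coefficient function $C\in\CS(\wt{T}(\BA),\BC)$ to be supported on $\wt{T'}(\BA)$, which determines it uniquely up to scalar. This is what makes the global Whittaker coefficient factorize cleanly even when local Whittaker spaces are higher-dimensional.
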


In the rest of this introduction, let us briefly explain our strategy to Theorem \ref{Mulone}. We shall omit all subscripts $v$ in the following and write the groups $G(F_v)$ etc simply by $G$ etc. 

 When $\sigma$ 
is non-supercuspidal, we apply Frobenius reciprocity law and the geometric lemma to compute $\Hom_{G}(\sigma_+\otimes\sigma_-,\sigma^\vee)$ (See Proposition \ref{PS} for details). This reduces to   the Jacquet module $(\sigma_+\otimes\sigma_{-})_N$ of $\sigma_+\otimes\sigma_{-}$ (See Proposition \ref{Jac} and \ref{Jac of tensor}), which in turn are reduced to computing the Jacquet module and Whittaker functionals of exceptional representations(available by  \cite{KP84}).  To deal with the case $\sigma$ is special (See Proposition \ref{St} for details), we also need to consider higher extension groups, which is done by using  the Schneider-Stuhler duality theorem and standard results on the Euler-Poincare numbers. 

The  case $\sigma$ ($|3|=1$) is supercuspidal  is much more involved. Let $\pi_{\pm}$ be the $\wt{G}$-principal series containing $\sigma_\pm$. Again by the Frobenius reciprocity law and the parabolic induction nature of $\pi_-$, one can show $\dim\Hom_G(\sigma\otimes\sigma_+, \pi_-^\vee)=3$ similar to the $\sigma$ non-supercuspidal case (see Proposition \ref{ub} for details). This gives an upper bound of $\dim\Hom_G(\sigma\otimes\sigma_+\otimes\sigma_-,\BC)$. 

To obtain the multiplicity one, we 
consider the construction of supercuspidal representations via the compact induction. In particular, we can write $\sigma=i_{\CK}^G\tau^\vee$, where $\CK=ZK$ or $J$ depending on the conductor of $\sigma$ and $\tau$ is a very cuspidal representation of $\CK$. Here  $K=\GL_2(\CO)$ and \[J=ZK_1\bigsqcup Z\begin{pmatrix} 0 & 1\\ \varpi & 0\end{pmatrix} K_1\]
with $K_1$ the Iwahori subgroup of $K$.

By the Frobenius reciprocity law, we are lead to determine the restriction of $\sigma_{\pm}$ to the maximal compact-modulo-center subgroup $\wt{\CK}$ where $\tilde{\CK}$ stands for the preimage of $\CK$ in $\wt{G}$. By twisting, we can and will assume the exceptional characters $\chi_{\pm}$ are unramified and normalized.

Set  $V^\prime=\sigma_{\pm}$ and $V = \pi_{\pm}$. The structure of $V^\prime|_{\wt{\CK}}$ has  its independent interests. For example, we can recover  (and slightly generalize) the level structure result  in \cite{Pat98} for $n=3$:
\[\dim V^{\prime,K}=1,\quad \dim V^{\prime,K_m}=m, \quad K_m=\left\{\begin{pmatrix} a & b\\ c & d\end{pmatrix}\in K \Bigg| \varpi^m \mid c \right\}, \quad m\geq 1.\]
 
 The study of the restriction of $V'$ to $\wt{\CK}$ is the most technical part of the whole paper. We now give more details. 
To determine $V^\prime|_{\wt{\CK}}$,  we first follow  \cite{Cas73} to determine $V|_{\wt{\CK}}$ using the Mackey theory as following.
\begin{itemize}
\item By the parabolic induction nature of $V$ and the Iwasawa decomposition $G=BK$, we obtain that (see Lemma \ref{K-type I} for details)  \[V|_{\wt{ZK}}=V_{0,0}\bigoplus\left(\bigoplus_{i\in\BZ/3\BZ}V_{1,i}\right)\bigoplus \left(\bigoplus_{k\geq2}V_{k}^{\oplus 3}\right)\]
where $V_{0,0}$ and $V_{1,i}$ and $V_{k}$ are mutually non-isomorphic irreducible $\wt{ZK}$-representations. Here $V_{0,0}$ is the  component containing the spherical vector.
\item Consider the  restriction of $V$ to $\wt{ZK_1}$.  The above subrepresentations are decomposed into the following irreducible ones: 
\[V_{0,0}= V_{0,0}^2,\quad V_{1,0}=V_{1,0}^1\oplus V_{1}^2,\quad
V_{1,i}=V_{0,i}^2\oplus V_{1,i}^1\oplus V_{1}^2, \quad  i=1,2, \quad 
V_{k}=V_{k}^1\oplus V_{k}^2,\quad k\geq2.\]
\item Set 
\[\tilde{V}_{1,i}=V_{0,i}^2\oplus V_{1,i+1}^1, \quad i\in\BZ/3\BZ, \quad  \tilde{V}_{k+1}=V_{k}^2\oplus V_{k+1}^1, \quad k\geq1.\] 
Then (See the paragraph following Lemma \ref{irreIV} for details) \begin{itemize}
    \item $\tilde{V}_{1,1}=\tilde{V}_{1,1}^+\oplus \tilde{V}_{1,1}^-$ with  $\tilde{V}_{1,1}^+$ and $ \tilde{V}_{1,1}^-$ non-isomorphic irreducible $\wt{J}$-representations.
    \item all  $\tilde{V}_{k}$, $k\geq 2$ and $\tilde{V}_{1,i}$, $i\neq 1$, $\tilde{V}_{1,1}^\pm$ are irreducible non-isomorphic $\wt{J}$-representations except $\tilde{V}_{1,0}\cong \tilde{V}_{1,2}$.
\end{itemize}   
Moreover (see Lemma \ref{K-type II} for details), 
\[V|_{\wt{J}}=\left(\bigoplus_{i \in \BZ/3\BZ} \tilde{V}_{1,i}\right)\bigoplus \left(\bigoplus_{k\geq2}\tilde{V}_k^{\oplus 3}\right).\]  
\end{itemize}

To deduce $V^\prime|_{\wt{\CK}}$, we consider the interplay between  $V^\prime|_{\wt{J}}$ and $V^\prime|_{\wt{ZK}}$ via the restriction to $\wt{ZK_1}$. Starting from $V_{0,0}=V_{0,0}^2\subset V^\prime$, we have $V_{1,1}^1\subset V^\prime|_{\wt{ZK_1}}$ by the $\wt{J}$-module structure and then $V_{0,1}^2\subset V^\prime|_{\wt{ZK_1}}$
by the $\wt{ZK}$-module structure. Repeating this argument, we have (see Theorem \ref{K-type} for details)
\[V^\prime|_{\wt{ZK}}=V_{0,0}\bigoplus V_{1,1}\bigoplus\left(\bigoplus_{k\geq 2}V_k \right), \quad V^\prime|_{\wt{J}}=\tilde{V}_{1,1}^*\bigoplus \tilde{V}_{1,0}\bigoplus \left(\bigoplus_{k\geq2}\tilde{V}_k\right)\]
for some $*\in\{\pm\}$.

Once the decompositions of $\sigma_\pm|_{\wt{\CK}}$ and $\pi_\pm|_{\wt{\CK}}$ are known,  we deduce together with the upper bound $\Hom_{G}(\sigma\otimes\sigma_+\otimes\pi_-,\BC)=3$
that $\dim \Hom_{\wt{\CK}}(\tau_+\otimes\tau_-,\tau)=1$
for precisely one irreducible factor $\tau_{\pm}\subset \sigma_{\pm}|_{\wt{\CK}}$ (Note the multiplicity $3$ of $V_k$ and $\wt{V}_k$ in the decomposition of $V|_{\wt{\CK}}$ for $k\geq2$).
Now   $\dim \Hom_{G}(\sigma\otimes\sigma_+\otimes\sigma_-,\BC)=1$ follows from the Frobenius reciprocity law.

\section{Local co-period}
In this section, we study the   period in the local setting. We shall establish the multiplicity one property, construct the distinguished   period and conduct the unramified calculation. 

Let  $F$ be a local field containing all $3$-th roots of unity $\mu_3$. Fix an additive character $\psi:\ F\to\BC^\times$. Fix an embedding $\epsilon:\ \mu_3\to\BC^\times$. Let  $(-,-)_3: F^\times\times F^\times\to\mu_3(\xrightarrow{\epsilon}\BC^\times)$ be the $3$-th Hilbert symbol. Fix $c\in\BZ/3\BZ$ and let $d=\frac{3}{\gcd\{1+4c,3\}}$. Let  $G=\GL_2(F)$ and $Z$, $A$, $T=AZ$, $N$,$B=TN\subset G$ be the center, the torus $\diag\{*,1\}$, the diagonal torus, the upper unipotent radical and  the standard upper Borel  respectively. Let $Z^{(c)}=F^{\times,d}$, \[A_3=
\left\{\diag\{a,1\}|a\in F^{\times,3}\right\}\subset T_3=\{\diag\{a,b\}\in T| a,b\in F^{\times,3}\}\subset B_3=T_3N.\]
Let $\delta$ be the modulus character on $B$:  \[\delta:\ B\lra\BC^\times,\quad \begin{pmatrix}a & b \\ 0 & d\end{pmatrix}\mapsto \Big|\frac{a}{d}\Big|.\]

When $F$ is non-archimedean, let $\CO$ be the ring of integers, $\varpi$ be a uniformizer and let  $|\cdot|$ be the norm on $F^\times$ such that $|\varpi|^{-1}=\sharp\CO/\varpi$. Let $\zeta_F(s):=(1-|\varpi|^s)^{-1}$ be the usual $L$-factor. Let $dx$  be the self-dual Haar measure on $F$  with respect to $\psi$ and let $d^\times y=\zeta_F(1)\frac{dy}{|y|}$ be the Haar measure on $F^\times$. Note that $\Vol(\CO,dx)=\Vol(\CO^\times,d^\times x)=|\varpi|^{\frac{d(\psi)}{2}}$ where $-d(\psi)$ is the minimal integer $n$ such that $\psi$ is trivial on $\varpi^n\CO$.  Let $K=\GL_2(\CO)\subset \GL_2(F)$ be the maximal open compact subgroup. 

When $F$ is archimedean (thus $F\cong\BC$), assume $\psi(z):=e^{2\pi i(z+\bar{z})}$ and let $|z|=z\bar{z}$. The self-dual Haar measure $dx$ on $F$ with respect to $\psi$ is twice the usual Lebesgue measure. Let $d^\times y=\zeta_F(1)\frac{d y}{|y|}$ with  $\zeta_F(s)=2(2\pi)^{-s}\Gamma(s)$. Let $K=U(2)\subset G$ be the maximal compact open subgroup.

In all cases, let $dk$ be the Haar measure on $K$ with total volume $1$ and $dg$ be the left Haar measure on $\GL_2(F)$ given by rule \[dz\begin{pmatrix} y & x \\ 0 & 1\end{pmatrix}k=|y|^{-1}d^\times z dx d^\times y dk,\quad z\in Z,\ y\in F^\times,\ x\in F,\ k\in K.\] 
We will freely use the following integration formula in \cite[Section 3.1.6]{MV10}:
\begin{lem}\label{int}For any $f\in L^1(ZN\bs G,dg)$, one has 
\begin{align*}\int_{ZN\bs G}f(g)dg &=\int_K\int_{F^\times} f(\begin{pmatrix}y & 0\\0 & 1\end{pmatrix}k)\frac{d^\times y}{|y|} dk\\
&=\begin{cases}|\varpi|^{d(\psi)}\frac{\zeta_F(2)}{\zeta_F(1)}\int_{F^\times}\int_Ff\left[\begin{pmatrix} y & 0\\ 0 & 1\end{pmatrix} \begin{pmatrix} 1 & 0 \\ x & 1\end{pmatrix}\right]dx\frac{d^\times y}{|y|}, & \ \text{if $F$ is non-archimedean};\\
2\pi\int_{F^\times}\int_Ff\left[\begin{pmatrix} y & 0\\ 0 & 1\end{pmatrix} \begin{pmatrix} 1 & 0 \\ x & 1\end{pmatrix}\right]dx\frac{d^\times y}{|y|}, &\ \text{if $F=\BC$}.\end{cases}
\end{align*}
\end{lem}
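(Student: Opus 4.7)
My plan is to derive both equalities from natural decompositions of $G$: Iwasawa for the first, big-cell Bruhat for the second. The first is essentially the definition of $dg$; the second is a Jacobian computation whose overall constant is pinned down by testing on a single convenient function.

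First I will verify the Iwasawa equality. The factorization $dg = |y|^{-1}\,d^{\times}z\,dx\,d^{\times}y\,dk$ along $G = Z\cdot N\cdot A_{+}\cdot K$ with $A_{+} = \{\diag(y,1)\}$ expresses the left Haar measure in Iwasawa coordinates. Since $d^{\times}z\,dx$ is a Haar measure on the closed subgroup $ZN$, integrating it out produces the quotient measure $|y|^{-1}\,d^{\times}y\,dk$ on $ZN \bs G$, which is the first equality.

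For the second equality I will parametrize the open dense Bruhat cell. The map
\[
F^{\times} \times F \lra ZN \bs G, \qquad (y,x) \longmapsto \diag(y,1)\,\bar n(x), \quad \bar n(x) = \psmallmatrix{1}{0}{x}{1},
\]
is a bijection onto the complement of the single coset $ZN \cdot w$, a set of measure zero. The main computation is the explicit Iwasawa decomposition of $\bar n(x)$: in the non-archimedean case $\bar n(x)\in K$ when $|x|\le 1$, while for $|x|>1$ one checks
\[
\bar n(x) = \psmallmatrix{x^{-1}}{1}{0}{x}\cdot\psmallmatrix{0}{-1}{1}{x^{-1}},
\]
whose second factor lies in $K$; reducing the first factor modulo left action of $ZN$ shows that the $A_{+}$-coordinate of $\diag(y,1)\bar n(x)$ becomes $yx^{-2}$. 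In the archimedean case Gram--Schmidt gives the analogous decomposition with $A_{+}$-coordinate $y/(1+|x|^{2})$. Substituting $y \mapsto yx^{2}$ (respectively $y\mapsto y(1+|x|^{2})$) in the $|x|>1$ (respectively archimedean) portion absorbs that shift, which is legal because $d^{\times}y/|y|$ is multiplicatively translation-invariant.

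After absorbing the shift, the problem reduces to showing that the pushforward of $dx$ along $x \mapsto (\text{the }K\text{-component of }\bar n(x))$ equals $C\cdot dk$ for a single universal constant $C$; this proportionality is forced by uniqueness up to scalar of the $ZN$-invariant measure on $ZN \bs G$, so only the constant remains. I would pin $C$ down by evaluating both sides on one convenient test function, such as $f(g) = \phi(y(g))$ depending only on the $A_{+}$-coordinate and supported where $y\in\CO^{\times}$. The computation then uses $\Vol(\CO, dx) = |\varpi|^{d(\psi)/2}$ and the elementary identity $\int_{|x|>1}|x|^{-2}\,dx = |\varpi|^{1+d(\psi)/2}$ in the non-archimedean case, and polar coordinates on $\BC$ in the archimedean case, to extract the stated values. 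The main obstacle is purely bookkeeping --- aligning the normalizations of $dk$, $dx$, $d^{\times}y$, and the self-dual convention for $\psi$, as well as the redundancy $A_{+}\cap K = \diag(\CO^{\times},1)$ in the Iwasawa parametrization --- and this is precisely why the paper cites \cite[\S3.1.6]{MV10} rather than reproducing the proof.
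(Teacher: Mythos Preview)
The paper does not supply its own proof of this lemma; it simply cites \cite[\S3.1.6]{MV10}. Your outline (Iwasawa for the first equality, big-cell parametrization plus uniqueness-of-invariant-measure and a test function for the second) is the standard argument and matches what that reference does.

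There is one genuine slip in your reasoning. The measure $d^{\times}y/|y|$ is \emph{not} multiplicatively translation-invariant: only $d^{\times}y$ is. Under the substitution $y\mapsto yx^{2}$ one picks up a Jacobian factor $|x|^{-2}$, so after the change of variable the $x$-integrand carries $\max(1,|x|)^{-2}$ in the non-archimedean case (and $(1+|x|)^{-1}$ in the complex case, recalling $|x|=x\bar x$). Consequently the correct reduction is that the pushforward of $\max(1,|x|)^{-2}\,dx$ --- not of $dx$ --- along $x\mapsto k(x)$ is proportional to $dk$. You seem to know this implicitly, since your test-function step invokes exactly $\int_{|x|>1}|x|^{-2}\,dx=|\varpi|^{1+d(\psi)/2}$; but the justification you give for step (a) contradicts what you actually do in step (d). With that correction the argument is fine.
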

\subsection{Metaplectic group}
Let $\wt{G}:=\wt{G}^{(c)}$ be the central extension $$0\lra \mu_3\stackrel{i}{\lra} \wt{G}^{(c)}\stackrel{p}{\lra} G\lra0$$
together with a preferred section $s:\ G\to \wt{G}^{(c)}$ determined by the Kubota 2-cocycle $\sigma_K:=\sigma_K^{(c)}$, i.e. $\wt{G}=G\times\mu_3$ equipped with the group structure 
$$(g_1,z_1)\cdot (g_2,z_2)=\left(g_1g_2,z_1z_2\sigma_K^{(c)}(g_1,g_2)\right), \quad g_1,g_2\in G,\ z_1,z_2\in\mu_3;$$
$$\sigma_K^{(c)}(g_1,g_2)=\left(\frac{K(g_1g_2)}{K(g_1)}, \frac{K(g_1g_2)}{K(g_2)\det(g_1)}\right)_3(\det(g_1),\det(g_2))_3^c;\quad K\left[\begin{pmatrix}a & b\\ c & d\end{pmatrix}\right]=
\begin{cases} c,\ &\ \text{if}\ c\neq0;\\
d,\ &\ \text{otherwise}.\end{cases}$$
Moreover $i(z)=(1,z)$, $p(g,z)=g$ and $s(g)=(g,1)$. Let $w=\begin{pmatrix} 0 & 1\\ 1 & 0\end{pmatrix}$, $\tau=\begin{pmatrix} 0 & 1\\ \varpi & 0\end{pmatrix}$. Via $s$, view $w$,$\tau$ as elements in $\wt{G}$ and $N$ as subgroup of $\wt{G}$.  

For  any subgroup $H\subset G$, let $\wt{H}:=p^{-1}(H)$. Note that the center of $\wt{G}$ (resp.$\wt{T}$ ) is $\wt{Z^{(c)}}$ (resp. $\wt{Z^{(c)}}\wt{T_3}$). Let $\wt{T_*}\subset \wt{T}$ be a maximal abelian subgroup. Set $\wt{Z_*}=\wt{T_*}\cap \wt{Z}$, which is also a maximal abelian subgroup in $\wt{Z}$. Set $B_*=T_*N$.

When $F$ is non-archimedean and $|3|=1$, we can and shall take 
\[Z_*=\Gamma_2\subset T_*=\{\diag\{a\delta,b\delta\}\in T|a,b\in \Gamma_1,\delta\in\Gamma_2\} \]
where $\Gamma_1=\CO^\times F^{\times,3}\subset \Gamma_2=\CO^\times F^{\times,d}\subset F^\times$.
Moreover in this case, the covering map $p:\ \wt{G}\to G$ admits the Kubota splitting on $K=\GL_2(\CO)$ $$\kappa:\ K\lra \wt{G},\quad g=\begin{pmatrix} a & b\\ c & d\end{pmatrix}\mapsto (g,k)\quad\text{ with } k=\begin{cases} (c,d/\det(g))_3, & \mathrm{if}\ 0<|c|<1;\\
1, & \mathrm{if}\ |c|=0,1.\end{cases}$$  

When $F$ is archimedean (thus $F=\BC$), the covering map $p:\ \wt{G}\to G$ splits. In this case, $\wt{G}=\mu_3\times G$, $\wt{B_*}=\mu_3\times B$,  $\wt{T_*}=\mu_3\times T$ and $\wt{Z_*}=\wt{Z}=\mu_3\times Z$.

For any subgroup $H\subset G$, a $\wt{H}$-representation will be called  $\epsilon$-genuine if $\mu_3$ acts on it via $\epsilon$. When $\epsilon$ is clear, we shall simply call it genuine. We shall freely view $H$-representations as $\wt{H}$-representations via the covering map $p:\ \wt{H}\to H$.
\subsection{Preliminaries on exceptional representations}
From now on, we assume $F$ is non-archimedean until Subsection \ref{Archimedean}. For any genuine character $\chi$ of $\wt{Z}^{(c)}\wt{T_3}$,  take an arbitrary extension  of $\chi$ to $\wt{T}_*$. Let $V(\chi)$ be the usual   smooth induction $$\left\{f:\ \wt{G}\to\BC\ \text{smooth}\ \Big| f(bg)=\chi(b)\delta^{1/2}(b)f(g),\ b\in\wt{B}_*,\ g\in\wt{G}\right\}.$$ 
 By \cite[Section I.1-2]{KP84}, \begin{itemize}
    \item $V(\chi)$  depends only on $\chi$ (independent of the extension);
    \item the smooth dual $V(\chi)^\vee$ of $V(\chi)$ is isomorphic to $V(\chi^{-1})$;
    \item Unless $\chi(s(\diag\{x^3,x^{-3})\})=|x|^\pm$ for all $x\in F^\times$, $V(\chi)$ is irreducible;
    \item If $\chi$ is {\em exceptional}, i.e. $\chi(s(\diag\{x^3,x^{-3}\}))=|x|$ for all $x\in F^\times$, there exists an exact sequence
    $$0\lra V_0(\chi^w)\lra V(\chi)\lra V_0(\chi)\lra 0$$
    where $V_0(\chi)$ is the unique irreducible quotient (resp. subrepresentation) of $V(\chi)$ (resp. $V(\chi^w)$) and  $V_0(\chi^w)$ is the unique irreducible quotient (resp. subrepresentation) of $V(\chi^w)$ (resp. $V(\chi)$).  The representation $V_0(\chi)$ is the \em{exceptional representations} of $\wt{G}$ associated to $\chi$.
    \item For exceptional characters $\chi$, $\mu$ of $\wt{Z}^{(c)}\wt{T_3}$, $V_0(\chi)\cong V_0(\mu)^\vee$ if and only if $\chi\mu^w=1$.
    \end{itemize}  
    \subsubsection{Jacquet module and twisted Jacquet module}
    For any $\wt{G}$-representation $\sigma$, the Jacquet module and $\psi$-twisted Jacquet module of $\sigma$ are \[\sigma_N:=\sigma/\langle \sigma(n)v-v,\ \forall n\in N, v\in\sigma\rangle,\quad \sigma_\psi:=\sigma/\langle \sigma(n)v-\psi(n)v,\ \forall n\in N, v\in\sigma\rangle\]
 Clearly, $\sigma_N$ (resp. $\sigma_\psi$) is a $\wt{B}$-module (resp. $\wt{Z}N$-module). 
 
 Since the center of $\wt{Z}$ (resp. $\wt{T}$) is $\wt{Z^{(c)}}$ (resp.$\wt{Z^{(c)}}\wt{T}_3$), all genuine representations of $\wt{Z}$ (resp. $\wt{T}$) have the form  $I_{\wt{Z_*}}^{\wt{Z}}\chi$ (resp. $I_{\wt{T_*}}^{\wt{T}}\chi$) for some character $\chi$ on $\wt{Z_*}$ (resp. $\wt{T_*}$) by the Stone-Von Neumann theory in \cite[Section 0.3]{KP84}. They depend only on $\chi|_{\wt{Z^{(c)}}}$ (resp. $\chi|_{\wt{Z^{(c)}T_3}}$). In particular, all 
 genuine representations of $\wt{Z}$ (resp. $\wt{T}$)  have dimension $[\wt{Z}:\wt{Z^{(c)}}]^{1/2}=d|d|^{-1/2}$ (resp. $3d|3d|^{-1/2}$).
  Assume $\sigma$ has central character $\omega$ and extend $\omega$ to $\wt{Z_*}$. Then $\sigma_\psi$ is the direct sum of $I_{\wt{Z_*}N}^{\wt{Z}N}\omega\boxtimes\psi$.
\begin{prop}\label{Jac}Let $\chi$ be an exceptional character. Then
\begin{itemize}
\item $V(\chi)_N=I_{w\wt{T}_*w}^{\wt{T}}(\chi^w\delta^{1/2})\oplus I_{\wt{T}_*}^{\wt{T}}(\chi\delta^{1/2})$.
\item $V_0(\chi)_N$ is the irreducible $\wt{T}$-representation $I_{w\wt{T}_*w}^{\wt{T}}(\chi^w\delta^{1/2})$. 
\item $V(\chi)_\psi$ has dimension $[\wt{T}: \wt{T}_*]=3d|3d|^{-1/2}$ and decomposes into $3|3|^{-1/2}$ isomorphic irreducible genuine $\wt{Z}$-representations.
\item   $V_0(\chi)_\psi$ is an irreducible genuine $\wt{Z}$-representation.
\end{itemize}
\end{prop}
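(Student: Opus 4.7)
The plan is to deduce all four assertions from the Bruhat decomposition $\wt{G} = \wt{B}_* \sqcup \wt{B}_* w \wt{B}_*$ applied to $V(\chi) = \Ind_{\wt{B}_*}^{\wt{G}} \chi$, together with the key arithmetic input that $\chi$ and $\chi^w$ disagree on the center $\wt{Z^{(c)}}\wt{T}_3$ of $\wt{T}$: exceptionality gives $\chi(s(\diag\{x^3,x^{-3}\})) = |x|$, while $\chi^w$ sends the same element to $|x|^{-1}$.

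For the first assertion, I would run the geometric lemma of Bernstein--Zelevinsky. It produces a two step filtration on $V(\chi)_N$ whose graded pieces are $\Ind_{\wt{T}_*}^{\wt{T}}(\chi \delta^{1/2})$ (from the open cell $\wt{B}_* w \wt{B}_*$) and $\Ind_{w\wt{T}_* w}^{\wt{T}}(\chi^w \delta^{1/2})$ (from the closed cell $\wt{B}_*$). Each piece is an irreducible $\wt{T}$-representation by Stone--von Neumann, and they are non-isomorphic since their central characters on $\wt{Z^{(c)}}\wt{T}_3$ differ. A finite dimensional $\wt{T}$-module whose irreducible constituents have distinct central characters decomposes by central-character isotypic components, so the filtration splits. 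For the second assertion, I would apply the exact Jacquet functor to $0 \to V_0(\chi^w) \to V(\chi) \to V_0(\chi) \to 0$, giving $0 \to V_0(\chi^w)_N \to V(\chi)_N \to V_0(\chi)_N \to 0$. By Frobenius reciprocity applied to the inclusion $V_0(\chi^w) \hookrightarrow V(\chi)$, the image lands in the closed-cell summand's complement; equivalently, since $V_0(\chi^w)$ is the irreducible subrepresentation of $V(\chi)$, its Jacquet module is identified with $\Ind_{\wt{T}_*}^{\wt{T}}(\chi\delta^{1/2})$. The quotient is then the remaining summand $\Ind_{w\wt{T}_* w}^{\wt{T}}(\chi^w\delta^{1/2})$, which is irreducible by Stone--von Neumann.

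For the third and fourth assertions, I would carry out the analogous Mackey analysis for the twisted Jacquet functor $(-)_\psi$. Since $\psi$ is non-degenerate on $N$, the closed Bruhat cell contributes trivially, and only the open cell contributes: a direct unfolding identifies $V(\chi)_\psi$ with a space of dimension $[\wt{T}:\wt{T}_*] = 3d|3d|^{-1/2}$ carrying central character $\chi|_{\wt{Z^{(c)}}}$. Every irreducible genuine $\wt{Z}$-representation with that central character has dimension $d|d|^{-1/2}$ and is determined by its central character, so $V(\chi)_\psi$ is a direct sum of $3|3|^{-1/2}$ copies of a single irreducible. For $V_0(\chi)_\psi$ I would combine exactness of $(-)_\psi$ with the Whittaker computations of Kazhdan--Patterson (\cite[Sections I.3--I.4]{KP84}), which yield $\dim V_0(\chi)_\psi = d|d|^{-1/2}$; the resulting $\wt{Z}$-quotient of $V(\chi)_\psi$ is then forced to be a single irreducible summand.

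The main technical obstacle I expect lies in the second and fourth assertions: pinning down which summand of $V(\chi)_N$, respectively $V(\chi)_\psi$, is attributable to the sub $V_0(\chi^w)$ rather than to the quotient $V_0(\chi)$. This requires tracking the Kubota cocycle through the standard intertwining operator and appealing to the explicit Whittaker-functional calculations of Kazhdan--Patterson. Once these identifications are in place, the remaining content of the proposition is a formal consequence of the Bruhat decomposition and the central-character separation $\chi \neq \chi^w$.
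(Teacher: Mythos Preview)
Your approach is essentially the same as the paper's, which simply cites the relevant results in \cite{KP84}: Lemma I.2.1 for $V(\chi)_N$, Theorem I.2.9(e) for $V_0(\chi)_N$, Lemma I.3.2 for $\dim V(\chi)_\psi$, and Theorem I.3.5 for $\dim V_0(\chi)_\psi$. Your sketch via the geometric lemma and Mackey theory is precisely how those results are proved, so there is no substantive divergence.

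Two points deserve attention. First, a minor one: your attribution of the two Jacquet constituents to the Bruhat cells is reversed. The closed cell $\wt{B}_*$ contributes the quotient $I_{\wt{T}_*}^{\wt{T}}(\chi\delta^{1/2})$ and the open cell $\wt{B}_* w \wt{B}_*$ contributes the subobject $I_{w\wt{T}_*w}^{\wt{T}}(\chi^w\delta^{1/2})$. Since the filtration splits this does not affect the first assertion, but it matters for tracking which piece comes from $V_0(\chi^w)$ in the second assertion; your final identification $V_0(\chi)_N \cong I_{w\wt{T}_*w}^{\wt{T}}(\chi^w\delta^{1/2})$ is nonetheless correct.

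Second, and more substantively: for the fourth assertion you invoke \cite[Sections I.3--I.4]{KP84} to obtain $\dim V_0(\chi)_\psi = d|d|^{-1/2}$ in general, but the paper is careful to note that \cite[Theorem I.3.5]{KP84} only yields this when $|3|=1$. When $|3|<1$ the local computation in \cite{KP84} does not directly give the dimension; the paper instead argues by incorporating the $\wt{Z}$-module structure and adapting the global uniqueness argument of \cite[Theorem II.5(b)]{KP84} to conclude irreducibility. Your proposal glosses over this wild case, and as written the fourth assertion would remain unproved at places above $3$.
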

\begin{proof}The result on $V(\chi)$ is \cite[Lemma I.2.1]{KP84}. The result on $V_0(\chi)_N$ follows from \cite[Theorem I.2.9(e)]{KP84}. The dimension result on $V(\chi)_\psi$ is \cite[Lemma I.3.2]{KP84}. Since the action of $\wt{Z^{(c)}}$ is fixed, we  deduce the $\wt{Z}$-module structure of $V(\chi)_\psi$.  When $|3|=1$,  $\dim V_0(\chi)_\psi=d$ by \cite[Theorem I.3.5]{KP84}.  Then one deduces $V_0(\chi)_\psi$ is an irreducible $\wt{Z}$-representation. Then we can take the $\wt{Z}$-module structure into consideration and slightly generalize \cite[Theorem II.5(b)]{KP84} to deduce $V_0(\chi)_\psi$ is irreducible even when $|3|<1$.
\end{proof}
Actually by \cite[Lemma I.3.2]{KP84}, $\dim V(\chi)_\psi=[\wt{T}: \wt{T_*}]=3d|3d|^{-1/2}$  for all characters $\chi:\ \wt{T_*}\to\BC^\times$. Moreover one can identify   $(I_{\wt{B_*}}^{\wt{G}}\chi)_{\psi}^\vee$ with the space of functions 
\[S(\chi,\psi):=\{C:\ \wt{T}\to\BC| C(ht)=(\delta^{1/2}\chi(h))^{-1}C(t)\quad\forall h\in \wt{T_*},\ t\in \wt{T}\}\] by the rule
\[(C, f):=\sum_{\eta\in\wt{T_*}\bs\wt{T}}C(\eta)\int_{N}\bar{\psi}(n)f(\eta w n) dn\]
 Assume $|3|=1$ and $\psi$ has conductor $0$. Then by \cite[Corollary I.3.4]{KP84}\footnote{The formula in \text{loc.cit} has some typos. We corrected the formula following their approach.}, $C\in (V_0(\chi))_\psi^\vee$ if and only if 
\[\label{cons}\ C(\eta_{b-1,a+1})=|\varpi|^{-(b-a-2-\lfloor{\frac{a-b}{3}}\rfloor)}g^{(b-a-1),\psi}C(\eta_{a,b}),\quad \eta_{a,b}=s(\begin{pmatrix}\varpi^a & 0\\ 0 & \varpi^b\end{pmatrix})\quad\forall (a,b)\in\BZ^2.\]
Here $\lfloor{x}\rfloor$ means the maximal integer smaller or equal to $x$ and  $g^{(i),\psi}=|\varpi|^{-1}\int_{\CO^\times} (\varpi,x)_3^i\psi(\frac{x}{\varpi})d^\times x$ is the Gauss sum.  
Note that $g^{(i),\psi}g^{(-i),\bar{\psi}}=|\varpi|^{-1}.$
\subsubsection{Whittaker model}
Take any smooth admissible $\wt{G}$-representation $\sigma$. Consider the induced representations \begin{align*}
     I_{\wt{Z}N}^{\wt{G}}\sigma_\psi:&=\{f:\wt{G}\to \sigma_\psi\mid f(ng)=\sigma_\psi(n)f(g)\quad \forall\ n\in\wt{Z}N, g\in\wt{G}\}\\
      I_{\wt{Z}N}^{\wt{B}}\sigma_\psi:&=\{f:\wt{B}\to \sigma_\psi\mid f(nb)=\sigma_\psi(n)f(b)\quad \forall\ n\in\wt{Z}N, b\in\wt{B}\}.
 \end{align*}
Then one has a  $\wt{B}$-equivariant  surjection
$$I_{\wt{Z}N}^{\wt{G}}\sigma_\psi\to I_{\wt{Z}N}^{\wt{B}}\sigma_\psi;\quad f\mapsto f|_{\wt{B}}.$$
Let $\ell: \sigma\to\sigma_\psi$ be the natural quotient map. When $\sigma$ is irreducible and $\sigma_\psi\neq0$,  the  map 
 $$ \sigma\to I_{\wt{Z}N}^{\wt{G}}\sigma_\psi;\quad v\mapsto(W_v:\ g\mapsto \ell(g\cdot v))$$
 is injective and $\wt{G}$-equivariant. The image $\CW(\sigma,\psi)$ is called the Whittaker model of $\sigma$ with respect to $\psi$. Composed with the restriction map, one obtains the $\wt{B}$-equivariant map $\sigma\to I_{\wt{Z}N}^{\wt{B}}\sigma_\psi$. Note that $I_{\wt{Z}N}^{\wt{B}}\sigma_\psi$ has the sub-$\wt{B}$-representation $i_{\wt{Z}N}^{\wt{B}}\sigma_\psi$. Here $i_{\wt{ZN}}^{\wt{B}}$ stands for the compact induction.

 \begin{lem}\label{Kiri}Assume $\sigma$ is irreducible and $\sigma_\psi\neq0$. Then $\sigma^N=0$ and 
 the morphism $\sigma\to I_{\wt{Z}N}^{\wt{B}}\sigma_\psi$
is injective. Denote the image by $\CK(\sigma,\psi)$. Then one has an exact sequence of $\wt{B}$-representations
$$0\lra i_{\wt{Z}N}^{\wt{B}}\sigma_\psi\lra\CK(\sigma,\psi)\lra \sigma_N\lra 0.$$
\end{lem}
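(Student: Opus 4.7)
The plan is to realise the lemma as the metaplectic Kirillov model, by applying the Bernstein--Zelevinsky derivative formalism to the smooth $\wt{B}$-module $\sigma|_{\wt{B}}$. Three assertions must be established: (i) $\sigma^N = 0$; (ii) the Kirillov map $\sigma \to I_{\wt{Z}N}^{\wt{B}} \sigma_\psi$ is injective; (iii) the image sits in the advertised short exact sequence.

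For (i), since $\psi$ is non-trivial the hypothesis $\sigma_\psi \neq 0$ rules out $\sigma$ being one-dimensional (any one-dimensional $\wt{G}$-representation has trivial $N$-action and hence vanishing $\psi$-coinvariants), so $\sigma$ is infinite-dimensional. If $\sigma^N$ were non-zero, admissibility of $\sigma$ restricted to $\wt{T}$ would yield a $\wt{T}$-eigenvector inside $\sigma^N$, producing a one-dimensional $\wt{B}$-subrepresentation $\chi \hookrightarrow \sigma$. Frobenius reciprocity would then exhibit $\sigma$ as a quotient of the genuine principal series $V(\chi')$ for a suitable character $\chi'$; using the structure theorem recalled in the preceding subsection (see also \cite{KP84}), the infinite-dimensional irreducible subquotients of $V(\chi')$ all satisfy $V^N = 0$ by an explicit Iwasawa/Bruhat computation, giving the desired contradiction.

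For (ii) and (iii), we invoke the Bernstein--Zelevinsky derivative exact sequence, which for a smooth $\wt{B}$-module $V$ with $V^N = 0$ takes the form $0 \to i_{\wt{Z}N}^{\wt{B}} V_\psi \to V \to V_N \to 0$. Its proof in the metaplectic setting parallels the algebraic case: one stratifies $\wt{Z}N \backslash \wt{B} \simeq \wt{T}/\wt{Z}$ by the two $N$-orbits (the closed point and the open complement), the closed stratum contributing $V_N$ via $\Psi^+\Psi^-$ and the open stratum contributing the compact induction $\Phi^+\Phi^- V \cong i_{\wt{Z}N}^{\wt{B}} V_\psi$. Applied to $V = \sigma|_{\wt{B}}$ (with $V^N = 0$ by step (i)) one obtains the short exact sequence $0 \to i_{\wt{Z}N}^{\wt{B}} \sigma_\psi \to \sigma|_{\wt{B}} \to \sigma_N \to 0$. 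The Kirillov map $v \mapsto W_v|_{\wt{B}}$ is the Frobenius-reciprocity lift of $\ell: \sigma \to \sigma_\psi$ (unit of the BZ adjunction); by construction it restricts to the tautological inclusion on the sub $i_{\wt{Z}N}^{\wt{B}} \sigma_\psi$, and combined with the vanishing $(i_{\wt{Z}N}^{\wt{B}} \sigma_\psi)_N = 0$ (an immediate Fourier computation on $N$ using the non-triviality of $\psi$), it induces an injection $\sigma_N \hookrightarrow (I_{\wt{Z}N}^{\wt{B}} \sigma_\psi)/i_{\wt{Z}N}^{\wt{B}} \sigma_\psi$ on the quotient. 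By the snake lemma the Kirillov map is injective, and its image $\CK(\sigma,\psi)$ inherits the BZ short exact sequence, yielding the claim.

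The main obstacle is establishing the Bernstein--Zelevinsky derivative exact sequence in the three-fold metaplectic setting, carefully tracking the genuine $\wt{Z}$-action encoded by the Stone--von Neumann theory through the functors $\Phi^\pm, \Psi^\pm$. The orbit geometry of $N$ on $\wt{Z}N\backslash\wt{B}$ and the sheaf-theoretic recipe for $\Phi^+\Phi^-$ are unaffected by the covering, so once the $\wt{Z}$-twist is absorbed into the coefficient modules the classical BZ argument transports verbatim; the auxiliary computation of $V(\chi')^N$ in step (i) is the only place where the exceptional character normalisation intervenes and must be checked case by case.
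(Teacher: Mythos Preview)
Your argument for (i) has a genuine gap. Even setting aside that $\wt{T}$ is not abelian in the cover (so a nonzero $\sigma^N$ need not contain a one-dimensional $\wt{B}$-line), the Frobenius step as written produces a map from the \emph{compact} induction $i_{\wt{B}}^{\wt{G}}\chi$, not from a principal series; after repairing this via duality you are still left asserting that every infinite-dimensional irreducible subquotient of $V(\chi')$ has vanishing $N$-invariants. That is essentially the statement being proved: a Bruhat computation controls $V(\chi')^N$ itself, but $N$-invariants are only left exact, so this says nothing directly about a quotient $\sigma$ of $V(\chi')$, and the promised ``case by case'' check is not supplied. The paper's proof of (i) is a two-line argument that avoids principal series entirely: if $0\neq v\in\sigma^N$ then $\wt{B}v\subset\sigma^N$ since $N$ is normal in $\wt{B}$, and the stabilizer of $v$ is open so it meets the big cell $\wt{B}wN$, forcing $\wt{B}wNv\subset\sigma^N$ as well; by the Bruhat decomposition and irreducibility $\sigma=\sigma^N$, whence $\sigma_\psi=0$.

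For (ii)--(iii) your appeal to the Bernstein--Zelevinsky derivative sequence is a legitimate alternative route. The paper instead proves injectivity by a direct Fourier-analytic computation (following \cite{PR01}): if $W_v|_{\wt{B}}=0$ one shows that the function $x\mapsto\sigma\bigl(\begin{smallmatrix}1&x\\0&1\end{smallmatrix}\bigr)v$ has vanishing Fourier coefficients at every nonzero frequency and is therefore constant, so $v\in\sigma^N=0$. Your approach is more structural and packages (ii) and (iii) together; the paper's is self-contained and sidesteps verifying the BZ formalism for the cover.
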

\begin{proof}Suppose $\sigma^N\neq 0$ and take $0\neq v\in \sigma^N$. Since $N$ is normal in $\tilde{B}$, one has  $\tilde{B}v\subset \sigma^N$. Let $\Stab(v)$ be the stabilizer of $v$. As $\Stab(v)$ is open, $\Stab(v)\cap \tilde{B}wN\neq\emptyset$. This implies  $\tilde{B}wN v\subset \sigma^N$.  By the Bruhat decomposition $\wt{G}=\wt{B}\cup \wt{B}\omega N$, we deduce  $\wt{G}\cdot v=\sigma\subset \sigma^N$.  Therefore $\sigma_\psi\subset (\sigma^N)_\psi=0$. Hence if $\sigma_\psi\neq0$, one has $\sigma^N=0$. To show the morphism \[\sigma\to I_{\wt{Z}N}^{\wt{B}}\sigma_\psi,\quad v\mapsto\xi_v\] is injective,  it suffices to show   $\varphi(x):=\sigma\begin{pmatrix} 1 & x \\ & 1 
\end{pmatrix}v$ is constant on $F$ if $\xi_v=0$. We follow the strategy in \cite[Theorem 3.1]{PR01}. 
 Let $\varphi_M(t):=\int_{\mathfrak{p}^{-M}}\bar{\psi}(tx)\varphi(x)dx$. Note that the function $u\mapsto \sigma\begin{pmatrix}
    u & \\
    & 1
\end{pmatrix}v$ is constant for $u\in 1+\mathfrak{a}$ with some ideal $\mathfrak{a}\subset \mathcal{O}_F$. Then for $\forall t\in F, M\in \mathbb{N}$, and $u\in 1+\mathfrak{a}$,
$$\varphi_M(tu)=\int_{\mathfrak{p}^{-M}}\bar{\psi}(tux)\sigma\begin{pmatrix} 1 & x\\ & 1
\end{pmatrix}v dx=\int_{\mathfrak{p}^{-M}}\bar{\psi}(tx)\sigma\begin{pmatrix}1 & u^{-1}x\\ & 1
\end{pmatrix}v dx$$
$$=\sigma\begin{pmatrix} u^{-1} & \\  & 1
\end{pmatrix}\int_{\mathfrak{p}^{-M}}\bar{\psi}(tx)\sigma\left[\begin{pmatrix}1 & x\\ & 1
\end{pmatrix}\begin{pmatrix}u & \\ & 1
\end{pmatrix}\right]v dx=\sigma\begin{pmatrix}u^{-1} & \\ & 1
\end{pmatrix}\int_{\mathfrak{p}^{-M}}\bar{\psi}(tx)\sigma\begin{pmatrix} 1 & x\\ & 1
\end{pmatrix}v dx.$$
This shows that
$\varphi_M(tu)=\sigma\begin{pmatrix} u^{-1} & \\ & 1\\
\end{pmatrix}\varphi_M(t)$ for all $t\in F$, $M\in \mathbb{N}$ and $u\in 1+\mathfrak{a}$. By definition $\xi_v=0$ if and only if $\sigma(s(\diag\{t,1\}))v\in \ker \ell$, $\forall\ t\in F^\times$. Note that $v\in \ker \ell$ if and only if for large enough $M$,
$$\int_{\fp^{-M}}\bar{\psi}(x)\sigma\left[\begin{pmatrix} 1 & x \\ 0 & 1\end{pmatrix}\right]vdx=0.$$
  Hence if $\xi_v=0$, one has 
$$\int_{\fp^{-M}}\bar{\psi}(tx)\varphi(x)dx=\varphi_M(t)=0$$
for all large enough $M$ depending on $t$. Since $1+\mathfrak{a}$ has finite index in any compact subset $C\subset F^\times$, we see that $\varphi_M(t)=0$ for all $t\in C$ and sufficiently large $M$. On the other hand, for any $\phi\in\CS(F)$, we may assume that $\phi$ vanishes outside $\mathfrak{p}^{-M}$ for sufficiently large $M$, by Fourier inversion formula,
$$\int_{F}\phi(x)\varphi(x)dx=\int_{\mathfrak{p}^{-M}}\varphi(x)\int_{F}\hat{\phi}(t)\bar{\psi}(tx)dt dx=\int_{F}\varphi_M(t)\hat{\phi}(t)dt=0.$$
This implies that $\varphi(x)$ is constant, so $\sigma(n)v=v$ for all $n\in N$ and consequently, $v=0$. 
\end{proof}
Now we consider the Jacquet module of $\sigma\otimes\pi$ where $\sigma$ and $\pi$ are irreducible  $\wt{G}$-representations. Note that the $\wt{Z}N$-action on $\sigma_\psi\otimes\pi_{\bar{\psi}}$ factors through $\wt{Z}$.
\begin{prop}\label{Jac of tensor}Let $\sigma$ and $\pi$ be irreducible  $\wt{G}$-representations. Then 
one has an exact sequence of $\wt{B}$-representations (for the diagonal $\wt{B}$-action)
\[\quad 0\lra i_{\wt{Z}N}^{\wt{B}}(\sigma_\psi\otimes\pi_{\bar{\psi}})\lra (\sigma\otimes\pi)_N\lra \sigma_N\otimes\pi_N\lra 0.\]
\end{prop}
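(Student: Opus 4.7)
The plan is to exploit the Kirillov-model realization of Lemma~\ref{Kiri}. Under the assumption that both $\sigma_\psi$ and $\pi_{\bar{\psi}}$ are non-zero (the degenerate case where one vanishes should be handled separately by a direct computation showing both sides of the claimed exact sequence collapse to $\sigma_N \otimes \pi_N$), the lemma yields two short exact sequences of $\wt{B}$-representations
\[0 \to A \to \sigma \to \sigma_N \to 0, \qquad 0 \to B \to \pi \to \pi_N \to 0,\]
with $A := i_{\wt{Z}N}^{\wt{B}} \sigma_\psi$ and $B := i_{\wt{Z}N}^{\wt{B}} \pi_{\bar{\psi}}$.

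Next I would build the three-step filtration $0 \subset A \otimes B \subset A \otimes \pi + \sigma \otimes B \subset \sigma \otimes \pi$, whose associated graded pieces from bottom up are $A \otimes B$, $A \otimes \pi_N \oplus \sigma_N \otimes B$, and $\sigma_N \otimes \pi_N$; the identification of the middle piece uses $(A \otimes \pi) \cap (\sigma \otimes B) = A \otimes B$, a consequence of the exactness of $\otimes_{\BC}$. Applying the exact Jacquet functor $(-)_N$, the middle graded piece vanishes because $A_N = 0$ and similarly $B_N = 0$: decomposing $A|_N$ via the Mackey formula (noting that $\wt{Z}N$ is normal in $\wt{B}$, so the double-coset space is $A \cong F^\times$) yields $A|_N \cong \bigoplus_{\alpha \in F^\times}\sigma_\psi^{(\alpha)}$ with $N$ acting on the $\alpha$-summand through the non-trivial character $n \mapsto \psi(\alpha n)$, so each summand has vanishing $N$-coinvariants. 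This produces the desired exact sequence
\[0 \to (A \otimes B)_N \to (\sigma \otimes \pi)_N \to \sigma_N \otimes \pi_N \to 0.\]

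Finally, to identify $(A \otimes B)_N$ with $i_{\wt{Z}N}^{\wt{B}}(\sigma_\psi \otimes \pi_{\bar{\psi}})$ I would apply the projection formula $(i_H^G V) \otimes W \cong i_H^G(V \otimes W|_H)$ together with the Mackey decomposition of $B|_{\wt{Z}N}$, obtaining
\[A \otimes B \;\cong\; \bigoplus_{\alpha \in F^\times} i_{\wt{Z}N}^{\wt{B}}\bigl(\sigma_\psi \otimes \pi_{\bar{\psi}}^{(\alpha)}\bigr)\]
with diagonal $N$-character $n \mapsto \psi((1-\alpha)n)$ on the $\alpha$-summand. The $\alpha = 1$ summand is precisely $i_{\wt{Z}N}^{\wt{B}}(\sigma_\psi \otimes \pi_{\bar{\psi}})$ with trivial $N$-action, hence equals its own $N$-coinvariants, while for $\alpha \neq 1$ the $N$-coinvariants vanish. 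The main obstacle is this last vanishing step: given a test function $f$ in the $\alpha$-summand supported on a compact $K \subset F^\times$, one must choose $n_0 = \psmallmatrix{1}{x}{}{1}$ with $|x|$ sufficiently large (depending on $K$, $\alpha$ and the conductor of $\psi$) so that the locally constant function $\alpha_0 \mapsto \psi((1-\alpha)\alpha_0 x) - 1$ is nowhere zero on $K$; one then exhibits $f$ as $(n_0 - 1) \cdot g$ with $g$ the pointwise quotient by this function, which remains smooth and compactly supported. This mild Fourier-type manipulation is the only analytic ingredient; the remainder of the argument is purely formal.
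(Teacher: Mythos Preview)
Your strategy coincides with the paper's: both use the Kirillov filtration of Lemma~\ref{Kiri} and then identify $(A\otimes B)_N$ with $i_{\wt{Z}N}^{\wt{B}}(\sigma_\psi\otimes\pi_{\bar\psi})$. The paper simply cites \cite[Lemma~6.3]{PT11} for this last identification and runs an asymmetric two-step filtration (first peel off $\sigma_N$, then $\pi_N$), whereas you unfold that lemma by hand via the projection formula and use a symmetric three-step filtration; these are cosmetic differences.

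One genuine correction is needed in your write-up. The subgroup $\wt{Z}N$ is closed but \emph{not open} in $\wt{B}$ (the quotient is $F^\times$, which is not discrete), so the Mackey restriction $i_{\wt{Z}N}^{\wt{B}}V\big|_{\wt{Z}N}$ is $C_c^\infty(F^\times,V)$ with the fiberwise-conjugated action, \emph{not} an algebraic direct sum $\bigoplus_{\alpha\in F^\times}V^{(\alpha)}$. Consequently your displayed isomorphism $A\otimes B\cong\bigoplus_\alpha i_{\wt{Z}N}^{\wt{B}}(\sigma_\psi\otimes\pi_{\bar\psi}^{(\alpha)})$ is false as stated. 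The fix is painless: replace the direct-sum language by $C_c^\infty(F^\times,-)$ throughout, and then your ``main obstacle'' paragraph becomes exactly the right argument---given $f\in C_c^\infty(F^\times,W)$ supported on a compact $K$ and a nowhere-trivial family of $N$-characters $\alpha_0\mapsto\psi(c\,\alpha_0 x)$, choose $x$ so that $\alpha_0\mapsto\psi(c\,\alpha_0 x)-1$ is nowhere zero on $K$ and divide. This is precisely how \cite[Lemma~6.3]{PT11} is proved, so once the notation is corrected your argument and the paper's are the same.
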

\begin{proof}By \cite[lemma 6.3]{PT11}, we have 
\[\big(i_{\wt{Z}N}^{\wt{B}}\sigma_\psi\otimes i_{\wt{Z}N}^{\wt{B}}\pi_{\bar{\psi}}\big)_N \cong i_{\wt{Z}N}^{\wt{B}}(\sigma_\psi\otimes\pi_{\bar{\psi}}).\]
 Since $(i_{\wt{Z}N}^{\wt{B}}\sigma_\psi\otimes \pi_N)_N=0$ and $(\sigma_N\otimes\pi)_N=\sigma_N\otimes\pi_N$, we deduce from  Lemma \ref{Kiri} that 
\[\big(i_{\wt{Z}N}^{\wt{B}}\sigma_\psi\otimes i_{\wt{Z}N}^{\wt{B}}\pi_{\bar{\psi}})_N=(i_{\wt{Z}N}^{\wt{B}}\sigma_\psi\otimes\pi)_N\] and we have an exact sequence of $\wt{B}$-representations
\[0\lra(i_{\wt{Z}N}^{\wt{B}}\sigma_\psi\otimes\pi)_N\lra(\sigma\otimes \pi)_N\lra \sigma_N\otimes\pi_N\lra 0. \]
Combing all these together, we are done.
\end{proof}
\subsubsection{Invariant pairing}\label{InvP}
In this subsection, we explicitly exhibit several $\wt{G}$-invariant pairings on $V_0(\chi)\times V_0(\chi)^\vee$ for an exceptional character $\chi$. We start with a general result: 
\begin{prop}\label{Inv}Let $\sigma$ be any irreducible smooth admissible $\wt{G}$-representation. If  $\sigma_\psi$ is an irreducible $\wt{Z}N$-representation, then any  $\wt{B}$-invariant pairing on $\sigma\times\sigma^\vee$ is automatically $\wt{G}$-invariant.
\end{prop}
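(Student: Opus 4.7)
The plan is to show $\dim\Hom_{\wt{G}}(\sigma\otimes\sigma^\vee,\BC)=\dim\Hom_{\wt{B}}(\sigma\otimes\sigma^\vee,\BC)=1$. The left-hand side is one-dimensional by Schur's lemma (since $\sigma$ is irreducible admissible), and the obvious inclusion $\Hom_{\wt{G}}(\sigma\otimes\sigma^\vee,\BC)\subset\Hom_{\wt{B}}(\sigma\otimes\sigma^\vee,\BC)$ reduces the proposition to establishing the reverse bound $\dim\Hom_{\wt{B}}(\sigma\otimes\sigma^\vee,\BC)\le 1$.

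First I would invoke the standard identification
\[
\Hom_{\wt{B}}(\sigma\otimes\sigma^\vee,\BC)=\Hom_{\wt{T}}\bigl((\sigma\otimes\sigma^\vee)_N,\BC\bigr),
\]
the $N$-coinvariants being taken under the diagonal action. Proposition \ref{Jac of tensor}, applied with $\pi=\sigma^\vee$, supplies the short exact sequence of $\wt{T}$-modules
\[
0\to i_{\wt{Z}N}^{\wt{B}}(\sigma_\psi\otimes\sigma^\vee_{\bar\psi})\to(\sigma\otimes\sigma^\vee)_N\to\sigma_N\otimes\sigma^\vee_N\to 0,
\]
where $\sigma_\psi\otimes\sigma^\vee_{\bar\psi}$ carries the trivial $N$-action because $\psi\bar\psi=1$. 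Applying the left-exact functor $\Hom_{\wt{T}}(-,\BC)$ yields
\[
0\to\Hom_{\wt{T}}(\sigma_N\otimes\sigma^\vee_N,\BC)\to\Hom_{\wt{T}}\bigl((\sigma\otimes\sigma^\vee)_N,\BC\bigr)\to\Hom_{\wt{T}}\bigl(i_{\wt{Z}N}^{\wt{B}}(\sigma_\psi\otimes\sigma^\vee_{\bar\psi}),\BC\bigr).
\]

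For the right-hand term, since $N$ acts trivially on $i_{\wt{Z}N}^{\wt{B}}(\sigma_\psi\otimes\sigma^\vee_{\bar\psi})$ (as a subrepresentation of something with trivial $N$-action), $\wt{T}$-invariance coincides with $\wt{B}$-invariance, and Frobenius reciprocity for compact induction collapses it to $\Hom_{\wt{Z}}(\sigma_\psi\otimes\sigma^\vee_{\bar\psi},\BC)$. The hypothesis that $\sigma_\psi$ is $\wt{Z}N$-irreducible is equivalent to $\wt{Z}$-irreducibility (as $N$ acts by the fixed character $\psi$), and the natural $\wt{Z}$-equivariant duality $\sigma^\vee_{\bar\psi}\cong(\sigma_\psi)^\vee$ descended from the canonical $\wt{G}$-pairing on $\sigma\otimes\sigma^\vee$ combined with Schur's lemma then pins this Hom-space to exactly one dimension.

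The heart of the argument, and the expected main obstacle, is the vanishing of the left term $\Hom_{\wt{T}}(\sigma_N\otimes\sigma^\vee_N,\BC)=0$. When $\sigma$ is supercuspidal this is automatic since $\sigma_N=0$. For exceptional $\sigma=V_0(\chi)$, Proposition \ref{Jac} identifies $\sigma_N$ and $\sigma^\vee_N$ as irreducible genuine $\wt{T}$-representations induced from $\chi^w\delta^{1/2}$ and from $\chi^{-1}\delta^{1/2}$ respectively, and a direct character computation on $s(\diag\{x^3,x^{-3}\})$ using the exceptional normalization $\chi(s(\diag\{x^3,x^{-3}\}))=|x|$ shows that $(\sigma_N)^\vee$ differs from $\sigma^\vee_N$ by the nontrivial modular twist $\delta$, precluding any $\wt{T}$-invariant pairing. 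Feeding this vanishing into the exact sequence above completes the bound $\dim\Hom_{\wt{B}}(\sigma\otimes\sigma^\vee,\BC)\le 1$ and proves the proposition.
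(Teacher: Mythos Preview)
Your approach is essentially the same as the paper's: both establish $\dim\Hom_{\wt{B}}(\sigma\otimes\sigma^\vee,\BC)\le 1$ via the Kirillov filtration, you by invoking Proposition~\ref{Jac of tensor} on $(\sigma\otimes\sigma^\vee)_N$, the paper by applying Lemma~\ref{Kiri} to $\sigma$ and $\sigma^\vee$ separately and tensoring the two exact sequences. The pieces to be analyzed coincide.

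Two small points. Your case split for the vanishing $\Hom_{\wt{T}}(\sigma_N\otimes\sigma^\vee_N,\BC)=0$ into supercuspidal versus exceptional $V_0(\chi)$ is in fact exhaustive under the hypothesis, since any non-supercuspidal irreducible $\sigma$ is a constituent of some $V(\chi)$ and the dimension count in Proposition~\ref{Jac} shows that among these only $V_0(\chi)$ has $\sigma_\psi$ irreducible; but you should say so explicitly, otherwise the argument reads as handling only the intended applications rather than the general statement. (The paper, for its part, disposes of this step with an unjustified ``easy to check''.) Second, the canonical $\wt{G}$-pairing on $\sigma\times\sigma^\vee$ does not literally descend to a pairing on the twisted coinvariants $\sigma_\psi\times(\sigma^\vee)_{\bar\psi}$ as you claim; the clean route is to observe that $(\sigma^\vee)_{\bar\psi}$ is a genuine $\wt{Z}$-representation with central character $\omega^{-1}$, argue that it too is irreducible (this requires a word, since the hypothesis is only on $\sigma_\psi$), and then invoke Stone--von Neumann to identify it with $(\sigma_\psi)^\vee$.
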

\begin{proof}The assumption $\sigma_\psi$ is $\wt{Z}$-irreducible implies that the $\wt{B}$-representation $\CS(F^\times, \sigma_\psi)=i_{\wt{ZN}}^{\wt{B}}\sigma_\psi$ is irreducible. Thus $\dim\Hom_{\wt{B}}(i_{\wt{ZN}}^{\wt{B}}\sigma_\psi\otimes  i_{\wt{ZN}}^{\wt{B}}\sigma_{\bar{\psi}}^\vee,\BC)=1$. Easy to check that 
\[\Hom_{\wt{B}}(\sigma_N\otimes \sigma_{N}^\vee,\BC)= \Hom_{\wt{B}}(i_{\wt{ZN}}^{\wt{B}}\sigma_\psi\otimes \sigma_N^\vee,\BC)=\Hom_{\wt{B}}(\sigma_N\otimes i_{\wt{ZN}}^{\wt{B}}\sigma_{\bar{\psi}}^\vee,\BC)=0.\]
Thus by the exact sequence in Lemma \ref{Kiri},   $\dim\Hom_{\wt{B}}(\sigma\otimes\sigma^\vee,\BC)=1$. Consequently, the natural injection
\[\Hom_{\wt{G}}(\sigma\otimes\sigma^\vee,\BC)\hookrightarrow \Hom_{\wt{B}}(\sigma\otimes\sigma^\vee,\BC)\]
is actually an isomorphism and the stated result follows.
\end{proof}
Assume the central character $\omega$ of $\sigma:=V_0(\chi)$ is unitary and   extend $\omega$ to $\wt{Z_*}$. Realize $\sigma_{\psi}$ (resp. $\sigma_{\bar{\psi}}^{\vee}$) as $I_{\wt{Z_*}N}^{\wt{Z}N}\omega\boxtimes\psi$ (resp. $I_{\wt{Z_*}N}^{\wt{Z}N}\omega^{-1}\boxtimes\bar{\psi}$ and embed $\sigma$ (resp. $\sigma^\vee$) into  $I_{\wt{Z}N}^{G}\sigma_\psi=I_{\wt{Z_*}N}^{\wt{G}}\omega\boxtimes\psi$ (resp. $I_{\wt{Z_*}N}^{\wt{G}}\omega^{-1}\boxtimes\bar{\psi}$). By the gauge estimation in \cite[Theorem I.4.1]{KP84}, the integration \[\int_{\wt{Z_*}N\bs \wt{B}}WW^\vee(b)db=\int_{F^\times}\sum_{z\in \wt{Z_*}\bs \wt{Z}}WW^\vee\left[\begin{pmatrix} y & 0\\ 0 & 1\end{pmatrix}z\right]d^\times y\] is absolutely convergent for any $W\in\sigma$ and $W^\vee\in\sigma^\vee$. Thus one has the $\wt{B}$-invariant pairing 
\[\sigma\times\sigma^\vee\lra\BC,\quad (W,W^\vee)\mapsto\int_{F^\times}\sum_{z\in \wt{Z_*}\bs\wt{Z}}WW^\vee\left[\begin{pmatrix}y & 0\\ 0 & 1\end{pmatrix}z\right]d^\times y.\]
Since $\sigma_{\psi}$ is irreducible, it is actually $\wt{G}$-invariant by Proposition \ref{Inv}.

Finally realize $V_0(\chi)$ as a subrepresentation of $V(\chi^w)=I_{\wt{B_*}}^{\wt{G}}\chi^w$. 
 The pairing 
\[V(\chi^w)\times V(\chi^w)^\vee\lra\BC,\quad  (f,f^\vee)\mapsto\sum_{t\in \wt{T_*}\bs\wt{T}}\delta^{-1}(t)\int_Kf(tk)f^\vee(tk)dk.\]
is non-degenerate and $\wt{G}$-invariant. Note that the natural restriction map is an isomorphism
\[\Hom_{\wt{G}}(V(\chi^w)\otimes V(\chi^w)^\vee,\BC)\cong \Hom_{\wt{G}}(V_0(\chi)\otimes V_0(\chi)^\vee,\BC).\]
 Thus the pairing on $V(\chi^w)\times V(\chi^w)^\vee$ induces a non-degenerate $\wt{G}$-invariant pairing on $V_0(\chi)\times V_0(\chi)^\vee$.
\subsection{Restriction to maximal compact-modulo-center subgroup}\label{K-type Section}
In this subsection, we assume $|3|=1$. For $m\geq 1$, let 
\[K_m=\left\{\begin{pmatrix} a & b \\ c & d\end{pmatrix}\in K\Bigg| \varpi^m\mid c\right\},\quad K_{1,m}=\left\{\begin{pmatrix}a & b \\ c & d\end{pmatrix}\in K\Bigg| \varpi\mid c,\ \varpi^m\mid b\right\}.\]
Let $J=ZK_1\sqcup Z\tau K_1$ with $\tau=\begin{pmatrix} 0 & 1\\ \varpi & 0\end{pmatrix}$. We shall consider the restriction to $\wt{ZK}$ and $\wt{J}$ of the exceptional representation $V_0(\chi)$.

Let $\kappa:\ K\to\wt{G}$ be the Kubota lifting. Let $B(\CO)^*=\kappa(B(\CO))\subset K^*=\kappa(K)$ and for $m\geq1$, $K_m^*=\kappa(K_m)$ and $K_{1,m}^*=\kappa(K_{1,m})$. A character $\chi$ on $\wt{Z}^{(c)}\wt{T_3}$ is  {\em unramified} if $\chi(s(h^3))=1$ for all $h\in K\cap T$ and moreover    {\em normalized} if $\chi$ is trivial on $\wt{Z}\cap K^*$. 
\begin{itemize}
\item By \cite[Lemma I.1.1]{KP84}, when $\chi$ is unramified, there exists a character $\theta:\ F^\times\to\BC^\times$ such that $\chi\cdot \theta\circ\det\circ p$ is normalized,
\item When $\chi$ is normalized, then it  admits a unique extension $\wt{T}_*\to\BC^\times$, the canonical extension,  which is   trivial on $\wt{T}_*\cap K^*$.
\end{itemize}
By twisting, we can and will  make the exceptional character $\chi$  unramified and normalized. Moreover, we will take the canonical extension to  define the model $V_0(\chi)$.

 Note that $$\wt{G}=\bigsqcup_{(a,b)\in(\BZ^2/3\BZ^2)/\sim}\wt{B_*}s(\eta_{a,b})K^*,\quad \eta_{a,b}=\begin{pmatrix}\varpi^a & 0\\ 0 & \varpi^b\end{pmatrix}\in G.$$
Here $(a,b)\sim(a^\prime,b^\prime)$ if $d\mid a-a^\prime=b-b^\prime$. Thus \[V(\chi)|_{K^*}=\bigoplus_{(a,b)\in(\BZ^2/3\BZ^2)/\sim}I_{B(\CO)^*}^{K^*}\chi_{a,b}\]
where  the character $\chi_{a,b}$ on $B(\CO)^*$
has the form 
\[\chi_{a,b}(g):=\chi^{\eta_{a,b}}(g):=\chi(s(\eta_{a,b})gs(\eta_{a,b})^{-1})=(\varpi^{b+2c(a+b)}, x)_3(\varpi^{a+2c(a+b)},y)_3, \quad g=\left[\begin{pmatrix} x & n \\ 0 & y\end{pmatrix},1\right].\]
Easy to check that $\chi_{a,b}$ extends to $K_m^*$, $m\geq 1$ by the rule \[K_m^*\lra \BC,\quad \kappa\left[\begin{pmatrix} x & n \\ m & y\end{pmatrix}
\right]
\mapsto (\varpi^{b+2c(a+b)}, x)_3(\varpi^{a+2c(a+b)},y)_3.\]
Moreover it  extends to $K^*$ if and only if 
\[\tag{$**$} \quad 3\mid a-b,\quad d\mid a.\]
In this case, we denote the resulting character  by $\mu_{0,a,b}$. 
 \begin{itemize}
     \item For $m\geq 1$, let $\mu_{m+1,a,b}$ be the complementary of $I_{K_m^*}^{K^*} \chi_{a,b}$ in $I_{K_{m+1}^*}^{K^*}\chi_{a,b}$. 
     \item When   $(**)$ holds, let $\mu_{1,a,b}$ be the complementary of $\mu_{0,a,b}$ in $I_{K_1^*}^{K^*}\chi_{a,b}$.
     \item When $(**)$ fails, let $\mu_{1,a,b}=I_{K_1^*}^{K^*} \chi_{a,b}$ and $\mu_{0,a,b}=0$.
 \end{itemize}  
By \cite{Cas73}, each $\mu_{k,a,b}$ is irreducible  and 
$I_{B(\CO)^*}^{K^*}\chi_{a,b}=\bigoplus_{k\geq0}\mu_{k,a,b}$.
\begin{lem}\label{irreI}$\mu_{k,a,b}\cong \mu_{k^\prime,a^\prime,b^\prime}$ if and only if 
\begin{itemize}
    \item $k=k^\prime\geq2$ and $a+b\equiv a^\prime+b^\prime \mod d$; 
    \item $k=k^\prime=1$, $a-b\equiv a^\prime-b^\prime \mod 3$, $d\mid a-a^\prime$ or \ $a-b\equiv b^\prime-a^\prime \mod 3$, $d\mid a-b^\prime$;
    \item $k=k^\prime=0$, $a\equiv b\equiv a^\prime\equiv b^\prime \mod 3$ and $d\mid a-a^\prime$.
\end{itemize} 
 Moreover,  $\dim \mu_{k,a,b}^{K_m^*}=\begin{cases} 1, &\ \text{if }2\leq k\leq m,\ d\mid a+b;\\ 
 1, &\ \text{if } k=0,1,\ 3\mid a-b,\ d\mid a;\\
0, &\ \text{otherwise}.\end{cases}$
\end{lem}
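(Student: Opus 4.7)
My plan is to deduce both claims from Frobenius reciprocity and Mackey theory, using as inputs only the definition of the $\mu_{k,a,b}$ and the explicit Hilbert-symbol formula
\[
\chi_{a,b}\bigl[\kappa\psmallmatrix{x}{n}{0}{y},1\bigr]
= (\varpi^{b+2c(a+b)}, x)_3\,(\varpi^{a+2c(a+b)}, y)_3.
\]
First I would settle the dimension count. From the decomposition $I_{K_m^*}^{K^*}\chi_{a,b}=\bigoplus_{k=0}^{m}\mu_{k,a,b}$ and the telescoping
\[
\dim\mu_{m,a,b}^{K_m^*}=\dim(I_{K_m^*}^{K^*}\chi_{a,b})^{K_m^*}-\dim(I_{K_{m-1}^*}^{K^*}\chi_{a,b})^{K_m^*},
\]
matters reduce to computing $(I_{K_k^*}^{K^*}\chi_{a,b})^{K_m^*}$ via the double coset decomposition $K_m^*\bs K^*/K_k^*$. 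By the Iwahori Bruhat decomposition this is parametrized by $\BP^1(\CO/\varpi^{\max(k,m)})$-type data, and a nonzero $K_m^*$-invariant vector forces $\chi_{a,b}$ to be trivial on the diagonal part of $K_m^*\cap B(\CO)^*\subset \CO^\times\times\CO^\times$. Since for $u\in\CO^\times$ we have $(\varpi^j,u)_3=1$ for all $u$ iff $3\mid j$, this triviality is equivalent to $3\mid b+2c(a+b)$ and $3\mid a+2c(a+b)$; adding and subtracting then yields $3\mid a-b$ and $d\mid a+b$. For $k\le 1$, no further double-coset averaging is available and one recovers the stated condition ``$3\mid a-b$ and $d\mid a$'' (using $d\mid 3$). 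For $k\ge 2$, the additional freedom in the Iwahori-type cosets $K_k^*\bs K^*/K_m^*$ absorbs the asymmetry in $(a,b)$ and relaxes the condition to $d\mid a+b$ alone.

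For the isomorphism criterion, Frobenius reciprocity gives
\[
\Hom_{K^*}(\mu_{k,a,b},\mu_{k',a',b'})\subset \Hom_{K_k^*}\bigl(\chi_{a,b},\ I_{K_{k'}^*}^{K^*}\chi_{a',b'}|_{K_k^*}\bigr),
\]
and Mackey's theorem decomposes the right-hand side over $K_k^*\bs K^*/K_{k'}^*$. Non-vanishing requires a double coset representative $g$ with $\chi_{a,b}=\chi_{a',b'}^g$ on $K_k^*\cap gK_{k'}^*g^{-1}$. The relevant representatives are torus elements $\eta_{j,l}$ (whose action on $\chi_{a,b}$ can be read off directly from the Hilbert symbol, shifting $(a,b)$ diagonally) and the Iwahori Weyl element $\tau=\psmallmatrix{0}{1}{\varpi}{0}$ (whose action swaps the two coordinates, sending $\chi_{a,b}\mapsto\chi_{b,a}$ up to the Kubota cocycle). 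For $k=k'\ge 2$ only the torus elements contribute and the equation matching the Hilbert-symbol characters collapses to $a+b\equiv a'+b'\bmod d$. For $k=k'=1$ the double cosets are $\{1,\tau\}$, giving the two alternatives ``$d\mid a-a'$'' (trivial representative) and ``$d\mid a-b'$'' (the $\tau$-swap), each coupled with the requisite $3\mid a-b\equiv a'-b'$ (or $b'-a'$) condition from matching the symbol exponents mod $3$. For $k=k'=0$, the $\mu_{0,a,b}$ are one-dimensional characters of $K^*$, so the isomorphism criterion is literal equality of these characters, which after the $3\mid a-b$, $d\mid a$ constraint of $(**)$ reduces precisely to $a\equiv b\equiv a'\equiv b'\bmod 3$ together with $d\mid a-a'$.

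The principal obstacle will be bookkeeping the Kubota cocycle and its effect under conjugation by the representatives outside $B(\CO)^*$, particularly the element $\tau$ and the torus elements $\eta_{j,l}$: one must verify that the only twist acquired is the predicted swap/shift of the Hilbert-symbol exponents, with no extraneous Gauss-sum factors obstructing the equality of characters. A secondary subtlety is the interplay with the equivalence relation $(a,b)\sim(a+k,b+k)$ for $d\mid k$ on the index set $(\BZ^2/3\BZ^2)/\!\sim$: this is what forces the divisibility conditions in $a-a'$ (or $a-b'$) rather than in $a$ and $b$ separately, and must be threaded carefully through both the $k=0$ and the $k=1$ case analyses. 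Once these cocycle computations are in hand, the rest of the argument is a routine Mackey calculation.
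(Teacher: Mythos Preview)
Your overall strategy---Frobenius reciprocity plus Mackey theory to compute $\dim\Hom_{K^*}(I_{K_{m'}^*}^{K^*}\chi_{a,b},I_{K_m^*}^{K^*}\chi_{a',b'})$ and then deduce both the isomorphism criterion and the $K_m^*$-invariants by telescoping---is exactly the paper's approach. However, there is a concrete error in your identification of the double coset representatives, and it would derail the computation if followed literally.

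You claim that the relevant representatives for $K_k^*\bs K^*/K_{k'}^*$ are the torus elements $\eta_{j,l}=\diag(\varpi^j,\varpi^l)$ and the affine Weyl element $\tau=\psmallmatrix{0}{1}{\varpi}{0}$. But neither of these lies in $K=\GL_2(\CO)$ (unless $j=l=0$), so they cannot represent double cosets of subgroups of $K^*$. The elements $\eta_{a,b}$ appear in the paper only in the Iwasawa decomposition $\wt{G}=\bigsqcup\wt{B_*}s(\eta_{a,b})K^*$, which is what produces the characters $\chi_{a,b}$ in the first place; and $\tau$ normalizes $K_1$ but sits outside $K$ and is used only later for the $\wt{J}$-type analysis. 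The correct representatives, as the paper uses, are
\[
K=K_{m''}\;\sqcup\;K_{m'}wK_m\;\sqcup\;\bigsqcup_{j=1}^{m''-1}K_{m'}w_jK_m,\qquad w=\psmallmatrix{0}{1}{1}{0},\quad w_j=\psmallmatrix{1}{0}{\varpi^j}{1},
\]
with $m''=\min(m,m')$. The ``swap'' $\chi_{a,b}\mapsto\chi_{b,a}$ that you attribute to $\tau$ is in fact produced by conjugation by $w\in K$, and this is what gives condition (II); the identity coset gives condition (I); and the cosets $w_j$ for $1\le j\le m''-1$ are what furnish the extra $m''-1$ dimensions in case (III) when neither (I) nor (II) holds. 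Once you replace $\eta_{j,l}$ and $\tau$ by $1$, $w$, and the $w_j$, the cocycle bookkeeping is routine (indeed simpler than you anticipate, since $w$ and $w_j$ are in $K$ and the Kubota splitting $\kappa$ is a genuine group homomorphism there), and the rest of your outline goes through.
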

\begin{proof}Take positive integers $m^\prime,m$, set $m^{\prime\prime}=\min\{m^\prime,m\}$ and 
\[K=K_{m^{\prime\prime}}\bigsqcup K_{m^\prime} w K_m \bigsqcup \left(\bigsqcup_{j=1}^{m^{\prime\prime}-1} K_{m^\prime} w_j K_m\right),\quad w=\begin{pmatrix} 0 & 1\\ 1 & 0\end{pmatrix},\ w_j=\begin{pmatrix}1 & 0\\ \varpi^j & 1\end{pmatrix}.\]
By the Frobenius reciprocity law, 
\[\dim\ (I_{K_{m^\prime}^*}^{K^*}\chi_{a,b})^{K_m^*}=\begin{cases} m^{\prime\prime}+1,\quad &\ \text{if $(**)$ holds}; \\
m^{\prime\prime}-1,\quad &\ \text{if $d\mid a+b$, $(**)$ fails};\\ 
0,\quad &\  \text{otherwise}.\end{cases}\]
More generally for $(a,b),(a^\prime,b^\prime)\in(\BZ/3\BZ)^2$, consider 
the following conditions:
\begin{enumerate}
\item[(I)] $a-b\equiv a^\prime-b^\prime \mod 3,\ d\mid a-a^\prime$.
\item[(II)] $a-b\equiv b^\prime-a^\prime \mod 3,\ d\mid a-b^\prime$.
\item[(III)] $a+b\equiv a^\prime+b^\prime \mod d$.
\end{enumerate}
We have
\[\dim \Hom_{K^*}(I_{K_{m^\prime}^*}^{K^*}\chi_{a,b}, I_{K_m^*}^{K^*}\chi_{a^\prime,b^\prime})=\begin{cases} m^{\prime\prime}+1,\quad & \text{if both (I) and (II) hold};\\
m^{\prime\prime},\quad & \text{if only one of  (I) or (II) holds};  \\
m^{\prime\prime}-1,\quad &  
\text{if none of  (I) and (II) holds, but (III) holds};  \\
0, \quad &\text{otherwise.}\end{cases}\]
The stated results follow immediately.
\end{proof}
To determine $\mu_{k,a,b}|_{K_1^*}$, consider the following $K_1^*$-representations:
\begin{itemize}
\item Take $\mu_{0,a,b}^1=0$ and $\mu_{1,a,b}^1=\chi_{a,b}$. Take $\mu_{0,a,b}^2=\chi_{a,b}^w$.
\item For $m\geq2$, let $\mu_{m,a,b}^1$ be the complementary of $I_{K_{m-1}^*}^{K_1^*}\chi_{a,b}$ in $I_{K_{m}^*}^{K_1^*}\chi_{a,b}$.  
\item  For $m\geq1$, let $\mu_{m,a,b}^2$ be the complementary of $I_{K_{1,m-1}^*}^{K_1^*}\chi^w_{a,b}$ in $I_{K_{1,m}^*}^{K_1^*}\chi^w_{a,b}$ with \[\chi_{a,b}^w:\ K_m^*\lra \BC,\quad \kappa\left[\begin{pmatrix} x & n \\ m & y\end{pmatrix}\right]
\mapsto (\varpi^{b+2c(a+b)}, y)_3(\varpi^{a+2c(a+b)},x)_3.\]
 
\end{itemize}

\begin{lem}\label{irreII} All $\mu_{k,a,b}^i$, $i=1,2$, $k\geq1$ are irreducible $K_1^*$-representations. Moreover, we have the following
\begin{itemize}
 \item when $k\geq 2$ or $k=0,1$ and (**) holds, $\mu_{k,a,b}|_{K_1^*}=\mu_{k,a,b}^1\oplus \mu_{k,a,b}^2$.
 \item when $k=1$ and (**) fails, $\mu_{k,a,b}|_{K_1^*}=\mu_{0,a,b}^2\oplus \mu_{k,a,b}^1\oplus \mu_{k,a,b}^2$.
\item $\mu_{k,a,b}^i\cong \mu_{k^\prime,a^\prime,b^\prime}^{i^\prime}$ if and only if 
\begin{itemize}
\item $(k,i)=(k^\prime,i^\prime)=(0,2), (1,1)$ and (I) holds;
 \item $(k,i)=(k^\prime,i^\prime)\neq (0,2), (1,1)$ and $a+b\equiv a^\prime+b^\prime\ \mod d$;
\item $(k,i)=(0,2)$, $(k^\prime,i^\prime)=(1,1)$ and (II) holds.
   \end{itemize}
\item $\dim \mu_{k,a,b}^{i,K_m^*}=\begin{cases} 1, & \text{if } i=1, 2\leq k\leq m,\ d\mid a+b;\\ 
 1, & \text{if } (k,i)=(0,2), (1,1),\ 3\mid a-b,\ d\mid a;\\
0, & \text{otherwise.}\end{cases}$
\end{itemize}
\end{lem}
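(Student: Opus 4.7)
The plan mirrors Lemma \ref{irreI}: compute the relevant Hom-spaces via Mackey theory at Iwahori level, extract irreducibility and isomorphism classes from those dimensions, and deduce the restriction formula by matching $K_m^*$-invariants. First I would record the double coset decompositions
\[K_1^* = K_{m''}^* \sqcup \bigsqcup_{j=1}^{m''-1} K_{m'}^* \kappa(w_j) K_m^*,\qquad w_j = \begin{pmatrix}1 & 0 \\ \varpi^j & 1\end{pmatrix},\ m''=\min\{m,m'\},\]
together with its analogue for $K_{1,m}^*$ versus $K_{1,m'}^*$ and the cross-decomposition of $K_1^*$ via $K_{m'}^*$ and $K_{1,m}^*$. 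The key difference from the $K^*$-level decomposition in Lemma \ref{irreI} is that the long Weyl element $w$ no longer lies in $K_1^*$, so the Mackey sum has a strictly smaller index set and the resulting dimension formulas are correspondingly reduced by one.

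Applying Frobenius reciprocity to these decompositions yields
\[\dim \Hom_{K_1^*}(I_{K_{m'}^*}^{K_1^*}\chi_{a,b},\ I_{K_m^*}^{K_1^*}\chi_{a',b'})\]
and its type-2 and cross-type analogues as explicit step-functions of whether the conditions (I), (II), (**), (III) of Lemma \ref{irreI} hold. The same successive-complementation argument used there then forces the irreducibility of each $\mu_{k,a,b}^i$. Within a fixed type, the roles of (I) and (II) swap because conjugation by $w$ sends $\chi_{a,b}$ to $\chi_{a,b}^w$; across types, the coincidence $\mu_{0,a,b}^2 \cong \mu_{1,a',b'}^1$ under (II) reflects the fact that the two characters $\chi_{a,b}^w$ and $\chi_{a',b'}$ on $K_1^*$ agree precisely when (II) holds. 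The invariant-dimension formula then follows by applying Frobenius reciprocity one more time to each factor, using $K_{m'}^* \cap K_{1,m}^* = K_{1,\max(m,m')}^*$ to see that only the trivial double coset contributes for type $2$.

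For the restriction formula, the decomposition $K = K_1 \sqcup K_1 w K_1$ combined with $K_1 \cap w K_m w^{-1} = K_{1,m}$ gives the Mackey identity $I_{K_m^*}^{K^*}\chi_{a,b}|_{K_1^*} = I_{K_m^*}^{K_1^*}\chi_{a,b} \oplus I_{K_{1,m}^*}^{K_1^*}\chi_{a,b}^w$. Taking the successive complements that define $\mu_{k,a,b}$ on the left and $\mu_{k,a,b}^i$ on the right yields the stated decomposition for $k \geq 2$ and for $k=0,1$ under (**); the $k=1$ case with (**) failing follows by applying the identity at $m=1$ and observing that the vanishing of $\mu_{0,a,b}$ on the left forces the extra $\mu_{0,a,b}^2$ summand on the right. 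The main technical obstacle I anticipate is keeping the bookkeeping of the four boolean conditions (I), (II), (**), (III) coherent through the several nested complementations, and in particular tracking carefully the dichotomy between the $d=1$ and $d=3$ cases in verifying the cross-type isomorphism pattern; once this bookkeeping is set up, the argument is routine Mackey/Clifford theory.
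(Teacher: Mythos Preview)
Your proposal is correct and follows essentially the same route as the paper: both compute the double coset decompositions of $K_1$ by the pairs $(K_{m'},K_m)$, $(K_{1,m'},K_{1,m})$, and the mixed pair, apply Frobenius reciprocity to obtain explicit Hom-space dimensions, and read off irreducibility, isomorphism classes, and invariants from those counts. The paper records exactly the three dimension formulas you anticipate (with (I) governing both the type-1 and type-2 self-Homs and (II) governing the cross-Hom), and your Mackey identity $I_{K_m^*}^{K^*}\chi_{a,b}|_{K_1^*} = I_{K_m^*}^{K_1^*}\chi_{a,b} \oplus I_{K_{1,m}^*}^{K_1^*}\chi_{a,b}^w$ is precisely how the restriction formula is obtained. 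One small caution: your remark that ``the roles of (I) and (II) swap'' within a fixed type is not quite right---condition (I) governs both type-1 and type-2 self-Homs, while (II) appears only in the cross-type Hom---so be careful with that bookkeeping when you write it out.
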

\begin{proof}
Note that $K=K_1\sqcup K_1wK_1$ and $K_1=K_{1,i} K_{j}$ for $i,j\geq1$.
\[K_1=\bigsqcup_{i=1}^m K_mw_iK_m=\bigsqcup_{i=0}^mK_{1,m}w^iK_{1,m},\quad w_i=\begin{pmatrix} 1 & 0\\ \varpi^i & 0\end{pmatrix},\quad w^i=ww_iw.\]
The stated results follow from the following dimension formulas \[\dim\Hom_{K_1^*}(I_{K_m^*}^{K_1^*}\chi_{a,b}, I_{K_{m}^*}^{K_1^*}\chi_{a^\prime,b^\prime})=\begin{cases} m, & \text{if (I) holds};\\
m-1, & \text{if (I) fails but (III) holds};\\
0, & \text{otherwise.}\end{cases}\]
\[\dim\Hom_{K_1^*}(I_{K_m^*}^{K_1^*}\chi_{a,b}, I_{K_{1,m}^*}^{K_1^*}\chi^w_{a^\prime,b^\prime})=\begin{cases} 1, & 
\text{ if (II) holds};\\
0, & \text{otherwise.}\end{cases}\]
\[\dim\Hom_{K_1^*}(I_{K_{1,m}^*}^{K_1^*}\chi^w_{a,b}, I_{K_{1,m}^*}^{K_1^*}\chi^w_{a^\prime,b^\prime})=\begin{cases} m+1, & 
\text{if (I) holds};\\
m, & \text{if (I) fails but (III) holds};\\
0, & \text{otherwise.}\end{cases}\]
\end{proof}
  Extend $\mu_{k,a,b}$ and $I_{K_m^*}^{K^*}\chi_{a,b}$  to  $\wt{Z_*K}$-representations using  the central character of $V(\chi)$. 
\begin{lem}\label{irreIII}$V_{k,a,b}:=I_{\wt{Z_*K}}^{\wt{ZK}}\mu_{k,a,b}$ is irreducible and \[V_{k,a,b}|_{\wt{Z_*K}}=\bigoplus_{i=0}^{d-1}\mu_{k,a+2i,b+2i},\quad \dim V_{k,a,b}^{K_m^*}=\begin{cases} 1, &\ 
\text{if } 2\leq k\leq m;\\ 1, &\ \text{if } k=0,1,\ 3\mid a-b;\\
0,  & \text{otherwise}.\end{cases}\]
Moreover, 
\begin{itemize}
    \item when $k=0,1$, $V_{k,a,b}\cong V_{k,a^\prime,b^\prime}$ if and only if  $a-b\equiv \pm(a^\prime-b^\prime) \mod n$;
    \item when $k\geq2$, $V_{k,a,b}$ is independent of $a,b$.
\end{itemize}

\end{lem}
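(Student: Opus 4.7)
The plan is to apply Mackey's theorem (equivalently, Clifford theory) to the normal finite-index inclusion $\wt{Z_*K}\subset\wt{ZK}$, whose index equals $d=[Z:Z_*]=[F^\times:\Gamma_2]$. Writing $\wt{ZK}=\bigsqcup_{i=0}^{d-1}s(z^i)\wt{Z_*K}$ with $z=\varpi I$, Mackey gives
\[
V_{k,a,b}|_{\wt{Z_*K}}=\bigoplus_{i=0}^{d-1}\mu_{k,a,b}^{z^i},
\]
where $\mu_{k,a,b}^{z^i}$ is the conjugate representation. The first step is a cocycle calculation to identify these conjugates: using $\chi_{a,b}=\chi^{\eta_{a,b}}$ together with the Kubota cocycle formula, in particular the factor $(\det g_1,\det g_2)_3^c$ and the $K$-function terms, one shows that conjugation by $s(z^i)$ sends $\chi_{a,b}$ to $\chi_{a+2i,b+2i}$ (the shift by $2i$, rather than $i$, reflects that $\det(z^i)=\varpi^{2i}$ enters the cocycle). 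This identification propagates to the induced pieces, yielding $\mu_{k,a,b}^{z^i}=\mu_{k,a+2i,b+2i}$ and hence the claimed decomposition.

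Granted the decomposition, irreducibility of $V_{k,a,b}$ follows from Clifford theory provided the $d$ conjugates are pairwise non-isomorphic. For $k\geq 2$, Lemma \ref{irreI} gives the criterion $(a+2i)+(b+2i)\equiv(a+2j)+(b+2j)\mod d$, i.e. $d\mid 4(i-j)$; since $d\in\{1,3\}$ and $\gcd(4,d)=1$, this forces $i\equiv j\mod d$. For $k\in\{0,1\}$, both alternatives in Lemma \ref{irreI} reduce to $d\mid 2(i-j)$, again forcing $i\equiv j\mod d$ since $\gcd(2,d)=1$. The fixed-vector dimensions follow by summing over $i$: for $2\leq k\leq m$ the condition $d\mid a+b+4i$ has (since $\gcd(4,d)=1$) a unique solution in $i$, giving dimension $1$; for $k\in\{0,1\}$ the condition $3\mid(a+2i)-(b+2i)=a-b$ is independent of $i$, while $d\mid a+2i$ again has a unique solution, so the dimension is $1$ precisely when $3\mid a-b$ and $0$ otherwise.

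For the isomorphism classes, $V_{k,a,b}\cong V_{k,a',b'}$ iff some summand $\mu_{k,a+2i,b+2i}$ matches some summand $\mu_{k,a'+2j,b'+2j}$ under the criterion of Lemma \ref{irreI}. For $k\geq 2$ the matching condition becomes $(a+b)+4i\equiv(a'+b')+4j\mod d$, which is solvable for any $a,b,a',b'$, so $V_{k,a,b}$ is independent of $(a,b)$. For $k\in\{0,1\}$, the two alternatives of Lemma \ref{irreI} (with shifted indices) reduce after absorbing the $2i,2j$ into $d$-congruences to $a-b\equiv\pm(a'-b')\mod 3$, matching the stated criterion.

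The main obstacle is the cocycle computation in the first step: one must carefully unwind the definition of $\chi_{a,b}$ as a conjugate of the exceptional character and check that the ambiguities from the Kubota cocycle's $K$-function part combine with the Hilbert-symbol factor $(\varpi^{2i},\varpi^{a+b})_3^c$ to yield exactly the $(a,b)\mapsto(a+2i,b+2i)$ shift, with no leftover $\mu_3$-twist obstructing the identification $\mu_{k,a,b}^{z^i}=\mu_{k,a+2i,b+2i}$. Everything else is then a bookkeeping exercise using Lemma \ref{irreI} and the elementary arithmetic $\gcd(2,d)=\gcd(4,d)=1$.
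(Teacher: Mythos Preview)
Your approach is the same as the paper's: both use Mackey theory for the normal inclusion $\wt{Z_*K}\subset\wt{ZK}$ with coset representatives $s(\eta_{i,i})$, identify the conjugates as $\mu_{k,a+2i,b+2i}$, and then read off irreducibility, the restriction decomposition, the fixed-vector dimensions, and the isomorphism criteria directly from Lemma~\ref{irreI}. Your write-up is considerably more explicit about the Clifford-theoretic bookkeeping (pairwise non-isomorphism of conjugates, solving the congruences using $\gcd(2,d)=\gcd(4,d)=1$) than the paper, which simply records the coset decomposition, asserts the resulting direct sum, and says ``All stated results follow from Lemma~\ref{irreI}.''
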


\begin{proof}Note that 
\[\wt{ZK}=\bigsqcup_{i=0}^{d-1} \wt{Z_*K}s\left[\begin{pmatrix}\varpi^i & 0\\ 0 & \varpi^i\end{pmatrix}\right]\wt{Z_*K}.\]
thus $I_{\wt{Z_*K}}^{\wt{ZK}}I_{K_m^*}^{K^*}\chi_{a,b}|_{\wt{\Gamma_2K}}=\bigoplus_{i=0}^{d-1}I_{K_m^*}^{K^*}\chi_{a+2i,b+2i}$.
All stated result follow from Lemma \ref{irreI}.
\end{proof}
For $k=0,1$, we shall   denote $V_{k,a,b}$ by $V_{k,a-b}$ and for $k\geq2$,   denote $V_{k,a,b}$ by $V_k$. Note that $V_{0,i}=0$ for $i\neq0$. Then except $V_{k,i}\cong V_{k,-i}$, $k=0,1$, these  $V_{k,i}$  and $V_k$ are mutually distinct irreducible $\wt{ZK}$-representations.  We can now determine $V(\chi)|_{\wt{ZK}}$.

\begin{lem}\label{K-type I} $V(\chi)|_{\wt{ZK}}=V_{0,0}\bigoplus\left(\bigoplus_{i\in\BZ/3\BZ}V_{1,i}\right)\bigoplus \left(\bigoplus_{k\geq2}V_k^{\oplus 3}\right)$.
\end{lem}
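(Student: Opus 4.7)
The plan is to combine the already-recorded $K^*$-level decomposition of $V(\chi)$ with Lemma \ref{irreIII} and then count $\wt{Z}/\wt{Z_*}$-orbits on the irreducible $K^*$-summands of $V(\chi)$.

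The first step is to rewrite the $K^*$-level structure. Combining the Iwasawa-Mackey identity $V(\chi)|_{K^*}=\bigoplus_{(a,b)\in(\BZ^2/3\BZ^2)/\sim}I_{B(\CO)^*}^{K^*}\chi_{a,b}$ with the decomposition $I_{B(\CO)^*}^{K^*}\chi_{a,b}=\bigoplus_{k\geq0}\mu_{k,a,b}$ of Casselman, one obtains
\[V(\chi)|_{K^*}=\bigoplus_{(a,b)}\bigoplus_{k\geq0}\mu_{k,a,b}.\]
Because $\chi$ is normalized, the central character of $V(\chi)$ is trivial on $\wt{Z_*}\cap K^*$, so each $\mu_{k,a,b}$ extends uniquely to an irreducible $\wt{Z_*K}$-representation and the same formula gives the $\wt{Z_*K}$-decomposition of $V(\chi)$.

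Next, for any summand $\mu_{k,a,b}\subset V(\chi)|_{\wt{Z_*K}}$ Frobenius reciprocity produces a non-zero $\wt{ZK}$-map $V_{k,a,b}=I_{\wt{Z_*K}}^{\wt{ZK}}\mu_{k,a,b}\to V(\chi)$. By Lemma \ref{irreIII} the source is irreducible, so the map is injective and its image is a $\wt{ZK}$-subrepresentation of $V(\chi)$ whose $\wt{Z_*K}$-restriction equals $\bigoplus_{i=0}^{d-1}\mu_{k,a+2i,b+2i}$. Thus the $V_{k,a,b}$'s attached to indices in the same orbit of the diagonal shift $(a,b)\mapsto(a+2i,b+2i)$ on $(\BZ^2/3\BZ^2)/\sim$ produce the same sub-$\wt{ZK}$-representation; these subrepresentations are pairwise disjoint at the $\wt{Z_*K}$-level and together exhaust the displayed $\wt{Z_*K}$-decomposition. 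Consequently $V(\chi)|_{\wt{ZK}}$ is the direct sum of the $V_{k,a,b}$'s, one for each diagonal-shift orbit with $\mu_{k,a,b}\neq0$.

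The final step is to enumerate the orbits. A direct inspection of $(\BZ^2/3\BZ^2)/\sim$ modulo the diagonal shift shows that in both cases $d=1$ and $d=3$ there are exactly three orbits, indexed by $a-b\in\BZ/3\BZ$. For $k=0$, condition $(**)$ forces $3\mid a-b$ and $d\mid a$, which is met only in the orbit of $(0,0)$; this yields the single summand $V_{0,0}$. For $k=1$ every $\mu_{1,a,b}$ is non-zero, so each of the three orbits contributes a summand $V_{1,i}$, one for each $i\in\BZ/3\BZ$. For $k\geq 2$ every $\mu_{k,a,b}$ is non-zero and, by Lemma \ref{irreIII}, $V_{k,a,b}$ is independent of $(a,b)$; the three orbits therefore each yield a copy of the same $V_k$, giving $V_k^{\oplus 3}$. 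Summing the three families delivers the asserted decomposition. The main bookkeeping point is to check that the two cases $d=1$ (three $(a,b)$-cosets, trivial shift action) and $d=3$ (nine cosets, shift action with orbits of size three) yield the same orbit count and the same multiplicities; this follows immediately once one lists the orbits explicitly.
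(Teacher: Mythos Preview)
Your argument is correct and reaches the same conclusion as the paper, but the bookkeeping is organized differently. The paper proceeds by computing the numbers
\[
\dim\Hom_{\wt{ZK}}\bigl(I_{\wt{Z_*K_m}}^{\wt{ZK}}\chi_{a,b},\,V(\chi)\bigr)=3(m-1)+2
\]
via Frobenius reciprocity and the intertwining formulas from Lemma~\ref{irreI}, and then reads off the multiplicities of the individual $V_{k,a,b}$ by varying $m$ and $(a,b)$ and using Lemma~\ref{irreIII}. You bypass this filtered count and instead decompose $V(\chi)|_{\wt{ZK}}$ directly as a sum over orbits of the diagonal shift $(a,b)\mapsto(a+2i,b+2i)$, each orbit contributing one copy of the corresponding $V_{k,a,b}$. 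Your route is shorter and more transparent; the paper's route has the advantage of being a pure dimension count with no need to track actual subspaces.

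One point in your write-up deserves a sentence of justification. When you say the image of the Frobenius map $V_{k,a,b}\to V(\chi)$ has $\wt{Z_*K}$-restriction \emph{equal to} $\bigoplus_{i=0}^{d-1}\mu_{k,a+2i,b+2i}$ (as opposed to merely isomorphic to it), and that images from distinct orbits are therefore disjoint, you are implicitly using that right translation by $\wt{Z}/\wt{Z_*}$ permutes the concrete subspaces $I_{B(\CO)^*}^{K^*}\chi_{a,b}\subset V(\chi)$ (they are cut out by support on the double cosets $\wt{B_*}s(\eta_{a,b})K^*$, which are shifted by right $\wt{Z}$-translation). Once this is said, the disjointness and exhaustion are immediate, and the orbit count in your final paragraph finishes the proof.
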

\begin{proof}Note that \[V(\chi)|_{K^*}=\bigoplus_{(a,b)\in(\BZ^2/3\BZ^2)/\sim}I_{B(\CO)^*}^{K^*}\chi_{a,b}=\bigoplus_{(a,b)\in(\BZ^2/3\BZ^2)/\sim}\bigoplus_{k\geq0}\mu_{k,a,b}\]
where $(a,b)\sim (a^\prime,b^\prime)$ if $d\mid a-a^\prime=b-b^\prime$. Thus 
\[\dim \Hom_{\wt{ZK}}(I_{\wt{Z_*K_m}}^{\wt{ZK}}\chi_{a,b}, V(\chi))=\sum_{(a^\prime,b^\prime)\in(\BZ/3\BZ)^2/\sim}\dim\Hom_{\wt{Z_*K_m}}(\chi_{a,b}, I_{\wt{Z_*B(\CO)}}^{\wt{Z_*K}}\chi_{a^\prime,b^\prime})=3(m-1)+2.\]
Hence by Lemma \ref{irreIII} and the discussions above, in $V_0(\chi)| _{\wt{ZK}}$, 
\begin{itemize}
    \item $V_k$, $k\geq 2$ occurs precisely $3$-times,
    \item  $V_{k,0}$, $k=0,1$ occurs precisely once,
    \item and $V_{1,i}$, $i\neq0$ occurs precisely twice.
\end{itemize} 
The desired result follows immediately.
\end{proof}

\begin{lem}\label{irreIV}$V_{k,a,b}^i:=I_{\wt{Z_*K}}^{\wt{ZK}}\mu^i_{k,a,b}$ is irreducible and \[V_{k,a,b}^i|_{\wt{Z_*K}}=\bigoplus_{j=0}^{d-1}\mu^i_{k,a+2j,b+2j},\quad \dim V_{k,a,b}^{i,K_m^*}=\begin{cases} 1, &\text{if }i=1,\ 2\leq k\leq m;\\ 1, &\text{if } (k,i)=(0,2),(1,1), 3\mid a-b;\\
0,  & \text{otherwise}.\end{cases}\]
 Moreover, 
\begin{itemize}
    \item when $(k,i)=(0,2),(1,1)$, $V_{k,a,b}^i\cong V_{k,a^\prime,b^\prime}^i$ if and only if  $a-b\equiv a^\prime-b^\prime \mod 3$.
    
    \item when $(k,i)\neq(0,2),(1,1)$, $V^i_{k,a,b}$ is independent of the choice of $a,b$.
    \item for $\{i,i^\prime\}=\{1,2\}$, $V^i_{k,a,b}\cong V^{i^\prime}_{k^\prime,a^\prime,b^\prime}$ if and only $\{(k,i), (k^\prime,i^\prime)\}=\{(0,2),(1,1)\}$, $a-b^\prime\equiv b-a^\prime \mod 3$.
    \item  $V_{k+1,a,b}^{1,\tau}\cong V_{k,a,b+1}^{2}$ for $\tau=s\left[\begin{pmatrix} 0 & 1\\ \varpi & 0\end{pmatrix}\right]$. 
\end{itemize}

\end{lem}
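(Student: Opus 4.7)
The plan is to parallel the proof of Lemma \ref{irreIII}, working one level finer: the representations $V_{k,a,b}^i$ are now induced from the $K_1^*$-level irreducible pieces $\mu_{k,a,b}^i$ classified in Lemma \ref{irreII}, rather than from the $K^*$-level pieces $\mu_{k,a,b}$. First, I would establish the restriction formula $V_{k,a,b}^i|_{\wt{Z_*K}} = \bigoplus_{j=0}^{d-1}\mu_{k,a+2j,b+2j}^i$ by Mackey's theorem, using the double coset decomposition
\[\wt{ZK} = \bigsqcup_{j=0}^{d-1} \wt{Z_*K}\cdot s(\diag\{\varpi^j,\varpi^j\})\]
already exploited in the proof of Lemma \ref{irreIII}, together with the fact that conjugation by $s(\diag\{\varpi^j,\varpi^j\})$ sends $\chi_{a,b}$ to $\chi_{a+2j,b+2j}$. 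Irreducibility of $V_{k,a,b}^i$ then follows from Mackey's criterion, since the $d$ summands are mutually non-isomorphic as $\wt{Z_*K_1}$-representations by the classification in Lemma \ref{irreII} applied to pairs $(a+2j,b+2j)$ and $(a+2j',b+2j')$.

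Second, the $K_m^*$-fixed vector count is obtained by summing $\dim \mu_{k,a+2j,b+2j}^{i,K_m^*}$ over $j$ using the formula from Lemma \ref{irreII}; exactly one value of $j$ modulo $d$ satisfies $d \mid (a+b)+4j$, yielding the stated dimension formula.

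For the isomorphism classification of $V_{k,a,b}^i$, I would apply Frobenius reciprocity combined with the restriction formula above, reducing to counting occurrences of $\mu^{i'}_{k',a',b'}$ in the direct sum decomposition. Under the $\wt{ZK}$-induction, criterion (III) from Lemma \ref{irreII} involving $a+b \bmod d$ is averaged out, because the shift $j \mapsto j+1$ moves $a+b$ by $4$, which ranges over all of $\BZ/d\BZ$; this explains why $V^i_{k,a,b}$ is independent of $(a,b)$ whenever $(k,i) \ne (0,2),(1,1)$. In the exceptional cases $(0,2),(1,1)$ only criterion (I) applies, and the invariant $a-b$ is preserved by the shift $j \mapsto j+1$, giving the stated dependence on $a-b \bmod 3$. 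The mixed isomorphism between types $(0,2)$ and $(1,1)$ comes from criterion (II) of Lemma \ref{irreII} in the same way.

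The final claim $V_{k+1,a,b}^{1,\tau} \cong V_{k,a,b+1}^{2}$ is the main obstacle and is best verified at the $K_1^*$-level, viewed as an isomorphism of $\wt{Z_*K_1}$-representations (since $\tau$ normalizes $\wt{ZK_1}$ but not $\wt{ZK}$). The element $\tau = s\left[\psmallmatrix{0}{1}{\varpi}{0}\right]$ conjugates $K_m^*$ onto $K_{1,m-1}^*$ and, at the character level, should carry $\chi_{a,b}$ to $\chi^w_{b,a+1}$ up to Hilbert-symbol cocycle factors coming from $\sigma_K^{(c)}$. Tracing through the definitions, $\tau$-conjugation then takes the complement $\mu_{k+1,a,b}^1$ (inside $I_{K_{k+1}^*}^{K_1^*}\chi_{a,b}$) to the complement $\mu_{k,b,a+1}^2$ (inside $I_{K_{1,k}^*}^{K_1^*}\chi^w_{b,a+1}$); applying the isomorphism criterion just proved identifies this with $\mu_{k,a,b+1}^2$. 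The delicate point is verifying that the Kubota-cocycle corrections assemble themselves into the precise character $\chi^w_{b,a+1}$ and do not introduce a parasitic twist by a cubic Hilbert symbol; this will require a direct calculation with $\sigma_K^{(c)}$ evaluated on $\tau$, $s(\diag\{\varpi^a,\varpi^b\})$, and elements of $K_m^*$, and is where the genuine nature of the representation is essential.
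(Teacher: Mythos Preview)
Your approach is essentially the same as the paper's. The paper's proof consists of three displayed Mackey-type identities at the level of $\wt{ZK_1}$:
\[
I_{\wt{Z_*K_m}}^{\wt{ZK_1}}\chi_{a,b}\big|_{K_1^*}=\bigoplus_{j=0}^{d-1}I_{K_m^*}^{K_1^*}\chi_{a+2j,b+2j},\qquad
I_{\wt{Z_*K_{1,m}}}^{\wt{ZK_1}}\chi^w_{a,b}\big|_{K_1^*}=\bigoplus_{j=0}^{d-1}I_{K_{1,m}^*}^{K_1^*}\chi^w_{a+2j,b+2j},
\]
\[
\bigl(I_{\wt{Z_*K_m}}^{\wt{ZK_1}}\chi_{a,b}\bigr)^\tau=I_{\wt{Z_*K}_{1,m-1}}^{\wt{ZK_1}}\chi_{a,b+1}^w,
\]
followed by the sentence ``Now all stated results follow from Lemma~\ref{irreII}.'' Your plan unpacks exactly these three steps.

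One point to watch: your preliminary guess that $\tau$-conjugation sends $\chi_{a,b}$ to $\chi^w_{b,a+1}$ disagrees with the paper's answer $\chi^w_{a,b+1}$. For $k\ge 1$ this discrepancy is harmless because $\mu_{k}^2$ depends only on $a+b\bmod d$, but for $k=0$ (i.e.\ the statement $V_{1,a,b}^{1,\tau}\cong V_{0,a,b+1}^2$) the representation $V_{0,\cdot,\cdot}^2$ is sensitive to $a-b\bmod 3$, and $b-(a+1)\not\equiv a-(b+1)$ in general. You correctly flag the cocycle computation as the delicate point; when you carry it out, the contribution from $\sigma_K^{(c)}$ on $s(\eta_{a,b})\cdot\tau$ will shift the indexing to $\chi^w_{a,b+1}$ rather than $\chi^w_{b,a+1}$, so your final step ``applying the isomorphism criterion'' will then be unnecessary.
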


\begin{proof}Note that \[I_{\wt{Z_*K_m}}^{\wt{ZK_1}}\chi_{a,b}|_{K_1^*}=I_{\wt{Z_*K_1}}^{\wt{ZK_1}}I_{K_m^*}^{K_1^*}\chi_{a,b}|_{K_1^*}=\bigoplus_{j=0}^{d-1}I_{K_m^*}^{K_1^*}\chi_{a+2j,b+2j},\]
\[I_{\wt{Z_*K_m}}^{\wt{ZK_1}}\chi_{a,b}^w|_{K_1^*}=I_{\wt{Z_*K_1}}^{\wt{ZK_1}}I_{K_{1,m}^*}^{K_1^*}\chi^w_{a,b}|_{K_1^*}=\bigoplus_{j=0}^{d-1}I_{K_{1,m}^*}^{K_1^*}\chi^w_{a+2j,b+2j},\]
\[(I_{\wt{Z_*K_m}}^{\wt{ZK_1}}\chi_{a,b})^\tau=I_{\wt{Z_*K}_{1,m-1}}^{\wt{ZK_1}}(\chi_{a,b}^\tau)=I_{\wt{Z_*K}_{1,m-1}}^{\wt{ZK_1}}\chi_{a,b+1}^w.\]
Now all stated results follow from Lemma \ref{irreII}. 
\end{proof}
For $(k,i)=(0,2),(1,1)$, we shall   denote $V^i_{k,a,b}$ by $V^i_{k,a-b}$ and in other cases,   denote $V^i_{k,a,b}$ by $V_k^i$. Set $\tilde{V}_{1,i}=V_{0,i}^2\oplus V_{1,i+1}^1$ and $\tilde{V}_{k+1}=V_k^2\oplus V_{k+1}^1$ for $k\geq1$. Then by Lemma \ref{irreIV}, 
\begin{itemize}
\item $\tilde{V}_{1,1}=\tilde{V}_{1,1}^+\oplus \tilde{V}_{1,1}^-$ with  $\tilde{V}_{1,1}^+$ and $ \tilde{V}_{1,1}^-$ non-isomorphic irreducible $\wt{J}$-representations.
    \item Except $\tilde{V}_{1,i}\cong \tilde{V}_{1,2-i}$, all  $\tilde{V}_{k}$, $k\geq 2$ and $\tilde{V}_{1,i}$, $i\neq 1$, $\tilde{V}_{1,1}^\pm$ are irreducible non-isomorphic $\wt{J}$-representations. 
\end{itemize}
\begin{lem}\label{K-type II}$V(\chi)|_{\wt{J}}=\left(\bigoplus_{i=0}^{2} \tilde{V}_{1,i}\right)\bigoplus \left(\bigoplus_{k\geq2}\tilde{V}_k^{\oplus 3}\right)$.
\end{lem}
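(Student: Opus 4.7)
The plan is to restrict the $\wt{ZK}$-decomposition of Lemma \ref{K-type I} further to $\wt{ZK_1} = \wt{ZK} \cap \wt{J}$, and then reassemble the $\wt{ZK_1}$-irreducibles into $\wt{J}$-irreducibles using the $\tau$-twist formula $V^{1,\tau}_{k+1,a,b} \cong V^2_{k,a,b+1}$ of Lemma \ref{irreIV}. This is the natural approach since $\wt{ZK_1}$ is a common subgroup of index two in both $\wt{J}$ and $\wt{ZK}$.

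First, I would combine Lemma \ref{K-type I} with the $\wt{ZK_1}$-restriction of each summand $V_{k,a,b}$ given by Lemma \ref{irreIV} (which in turn uses Lemma \ref{irreII} applied to each $\mu_{k,a+2j,b+2j}|_{K_1^*}$) to obtain an explicit decomposition of $V(\chi)|_{\wt{ZK_1}}$ in terms of the $\wt{ZK_1}$-irreducibles $V^i_{k,a,b}$. The contributions are: $V_{0,0}$ gives $V^2_{0,0}$; each $V_{1,i}$ gives $V^1_{1,i} \oplus V^2_1$, plus additional $V^2_{0,i}$-pieces in cases where the condition $(**)$ fails; and each $V_k^{\oplus 3}$ for $k \geq 2$ gives $V_k^{1,\oplus 3} \oplus V_k^{2,\oplus 3}$.

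Second, since $\wt{J} = \wt{ZK_1} \sqcup \wt{ZK_1}\tau$, each $\wt{ZK_1}$-irreducible either extends to $\wt{J}$ (when it is $\tau$-stable) or combines with its $\tau$-twist into a single induced $\wt{J}$-irreducible. The formula $V^{1,\tau}_{k+1,a,b} \cong V^2_{k,a,b+1}$ identifies the pairings: $V_k^1$ pairs with $V_{k-1}^2$ for $k \geq 2$ to form $\tilde{V}_k$, while $V^1_{1,i+1}$ pairs with $V^2_{0,i}$ to form $\tilde{V}_{1,i}$. By the isomorphism criteria in Lemma \ref{irreIV}, the two components of $\tilde{V}_k$ (for $k \geq 2$) and of $\tilde{V}_{1,i}$ (for $i \neq 1$) are non-isomorphic $\wt{ZK_1}$-modules, so these are irreducible $\wt{J}$-modules. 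In the exceptional case $i = 1$, we have $V^2_{0,1} \cong V^1_{1,2}$ as $\wt{ZK_1}$-modules, so $\tilde{V}_{1,1}$ decomposes further into two non-isomorphic $\wt{J}$-irreducibles $\tilde{V}_{1,1}^\pm$ distinguished by a sign of the $\tau$-eigenvalue, consistent with the definition preceding the lemma.

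Third, I would verify the multiplicities. The $V_k^{2,\oplus 3}$-pieces from $V_k^{\oplus 3}$ pair with the $V_{k+1}^{1,\oplus 3}$-pieces from $V_{k+1}^{\oplus 3}$ to assemble into $\tilde{V}_{k+1}^{\oplus 3}$ for $k \geq 1$. The $V^1_{1,i}$- and $V^2_{0,i-1}$-pieces (coming from the $V_{1,\cdot}$ summands, the summand $V_{0,0}$, and the corrections when $(**)$ fails) assemble into $\bigoplus_i \tilde{V}_{1,i}$ with multiplicity one each. The main obstacle will be the careful bookkeeping at the boundary $k = 1, 2$, where the $V_1^2$-pieces from all three $V_{1,i}$-summands must collectively match the $V_2^1$-pieces from $V_2^{\oplus 3}$ under the $\tau$-pairing, and where the corrections from the $(**)$-failure case must fit exactly into the $\tilde{V}_{1,i}$-slots. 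As a sanity check, one can verify both decompositions produce the same dimensions of $K_m^*$-invariants using the dimension formulas in Lemma \ref{irreI} and Lemma \ref{irreII}.
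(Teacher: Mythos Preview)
Your proposal is correct and follows essentially the same route as the paper: restrict the $\wt{ZK}$-decomposition of Lemma~\ref{K-type I} to $\wt{ZK_1}$ via Lemmas~\ref{irreII} and~\ref{irreIV}, then reassemble into $\wt{J}$-pieces using the $\tau$-twist relation and Frobenius reciprocity (equivalently, Clifford theory for the index-two inclusion $\wt{ZK_1}\subset\wt{J}$). Your outline is in fact more explicit than the paper's terse two-line argument, and your remark about the boundary bookkeeping at $k=1,2$---where the three copies of $V_1^2$ coming from the three $V_{1,i}$ must pair with the three copies of $V_2^1$ from $V_2^{\oplus 3}$---is exactly the point requiring care.
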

\begin{proof}By Lemma \ref{K-type I}, we deduce that  \[V(\chi)|_{\wt{ZK_1}}=\left(\bigoplus_{i=0}^{2}V_{1,i}^1\right)\bigoplus
\left(\bigoplus_{k\geq2}V_k^1\right)\bigoplus \left(\bigoplus_{i=0}^{2}V_{0,i}^2\right)\bigoplus
\left(\bigoplus_{k\geq1}V_k^2\right).\] 
By the above discussion and the Frobenius reciprocity law, one deduce
\[V(\chi)|_{\wt{J}}=\left(\bigoplus_{i=0}^{2} \tilde{V}_{1,i}\right)\bigoplus \left(\bigoplus_{k\geq2}\tilde{V}_k^{\oplus 3}\right).\] 
\end{proof}

\begin{thm}\label{K-type} 
We have
\[V_0(\chi)|_{\wt{ZK}}=V_{0,0}\bigoplus V_{1,1}\bigoplus \left(\bigoplus_{k\geq 2}V_k\right)\]
and for some $*\in\{\pm\}$, \[V_0(\chi)|_{\wt{J}}=\tilde{V}_{1,1}^*\bigoplus \tilde{V}_{1,0}\bigoplus \left(\bigoplus_{k\geq2}\tilde{V}_k\right).\]
\end{thm}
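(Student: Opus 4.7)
Both $\wt{ZK}$ and $\wt{J}$ are compact modulo center, so $V_0(\chi)|_{\wt{ZK}}$ and $V_0(\chi)|_{\wt{J}}$ are semisimple, and the theorem reduces to identifying which irreducible components of $V(\chi)|_{\wt{ZK}}$ and $V(\chi)|_{\wt{J}}$ (listed in Lemmas \ref{K-type I} and \ref{K-type II}) survive in the irreducible quotient $V_0(\chi) = V(\chi)/V_0(\chi^w)$.

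The plan is first to propagate a lower bound by shuttling between the $\wt{ZK}$ and $\wt{J}$ decompositions through their common subgroup $\wt{ZK_1}$. Since $\chi$ is unramified and normalized, $V(\chi)^{K^*}$ is one-dimensional, spanned by the spherical vector $f_0$ with $f_0|_{K^*}\equiv 1$; as $V(\chi)$ is generated by $f_0$ as a $\wt{G}$-module, its image in the irreducible quotient $V_0(\chi)$ is nonzero, yielding $V_{0,0} \subset V_0(\chi)|_{\wt{ZK}}$. Restricting further, $V_{0,0}|_{\wt{ZK_1}} = V_{0,0}^2 \subsetneq \tilde{V}_{1,0}$, and the $\wt{J}$-irreducibility of $\tilde{V}_{1,0}$ forces $\tilde{V}_{1,0} \subset V_0(\chi)|_{\wt{J}}$; in particular $V_{1,1}^1 \subset V_0(\chi)|_{\wt{ZK_1}}$, and the $\wt{ZK}$-irreducibility of $V_{1,1}$ upgrades this to $V_{1,1} \subset V_0(\chi)|_{\wt{ZK}}$. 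Iterating---extract $V_1^2$ from $V_{1,1}|_{\wt{ZK_1}}$, promote via the $\wt{J}$-irreducible $\tilde{V}_2 = V_1^2 \oplus V_2^1$ to get $V_2^1$, then promote via the $\wt{ZK}$-irreducible $V_2$, and so on---gives $V_k \subset V_0(\chi)|_{\wt{ZK}}$ for every $k \geq 2$.

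For the matching upper bound, I plan to count $K_m^*$-fixed vectors. By Lemma \ref{irreIII}, the lower bound just obtained already gives $\dim V_0(\chi)^{K_m^*} \geq m$, and the opposite inequality can be established via the Whittaker model: the Kazhdan--Patterson formula \cite[Theorem I.4.2]{KP84} for the spherical Whittaker function, combined with the dimension of $V_0(\chi)_\psi$ computed in Proposition \ref{Jac} and the constants $C(\eta_{a,b})$ recorded just after it, bounds $\dim V_0(\chi)^{K_m^*}$ from above by $m$. Equality then rules out the remaining irreducible $\wt{ZK}$-components---namely $V_{1,0}$, the extra copy of $V_{1,1}$, and the two extra copies of each $V_k$---which therefore lie entirely in the complementary summand $V_0(\chi^w)$. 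The $\wt{J}$-decomposition is read off by restricting the established $\wt{ZK}$-decomposition to $\wt{ZK_1}$ and regrouping according to Lemma \ref{K-type II}; the only remaining ambiguity is the sign $\ast \in \{\pm\}$ labeling which of $\tilde{V}_{1,1}^\pm$ occurs, which the statement leaves unspecified.

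The main obstacle is the upper bound. The bootstrap produces inclusions very cleanly but does not, by itself, bound multiplicities. A viable alternative to the Whittaker-theoretic route is to run a companion bootstrap on $V_0(\chi^w) \subset V(\chi)$, seeded by a distinguished $\wt{ZK}$-component detectable through its Jacquet module $I^{\wt{T}}_{\wt{T}_*}(\chi\delta^{1/2})$ from Proposition \ref{Jac}; one then needs to verify that the two bootstraps together exhaust $V(\chi)|_{\wt{ZK}}$, which still demands independent dimension bounds or a careful symmetry argument relating $V_0(\chi)$ to $V_0(\chi^w)$.
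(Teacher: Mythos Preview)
Your bootstrap lower bound is correct and is exactly what the paper does: start from the spherical line $V_{0,0}$, shuttle through $\wt{ZK_1}$ between the $\wt{ZK}$- and $\wt{J}$-structures, and climb to $V_{1,1}$ and all $V_k$, $k\ge 2$.

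The gap is in the upper bound. Your claim that the equality $\dim V_0(\chi)^{K_m^*}=m$ ``rules out \ldots\ the extra copy of $V_{1,1}$'' is false as stated: by Lemma~\ref{irreIII} one has $\dim V_{1,1}^{K_m^*}=0$ for every $m$ (since $V_{1,1}=V_{1,a,b}$ with $3\nmid a-b$), so the $K_m^*$-count is completely insensitive to whether $V_0(\chi)$ contains one or two copies of $V_{1,1}\cong V_{1,2}$. It does exclude $V_{1,0}$ and the extra copies of $V_k$ for $k\ge 2$, but not this one. Separately, the Whittaker-theoretic bound $\dim V_0(\chi)^{K_m^*}\le m$ that you invoke is itself a nontrivial statement; the paper in fact deduces it as a \emph{corollary} of Theorem~\ref{K-type}, so using it as input here reverses the intended flow of the argument.

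The paper's upper bound needs no external input and stays inside the same shuttling mechanism. One argues by contradiction: if the $\wt{J}$-lower bound were strict, then $V_0(\chi)|_{\wt{J}}$ contains an additional irreducible piece (a second $\tilde V_k$, or $\tilde V_{1,1}^{\mp}$, or $\tilde V_{1,2}$). Feeding that piece back through $\wt{ZK_1}$ and iterating the bootstrap forces $V_0(\chi)|_{\wt{ZK}}$ and $V_0(\chi)|_{\wt{J}}$ to exhaust the decompositions in Lemmas~\ref{K-type I} and~\ref{K-type II}. Since $\wt{ZK}$ and $\wt{J}$ together generate $\wt{G}$, this would give $V_0(\chi)=V(\chi)$, contradicting the reducibility of $V(\chi)$ at an exceptional character.

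Your route is salvageable along the same lines: a hypothetical second $V_{1,1}$ contributes a second $V_1^2$ to $V_0(\chi)|_{\wt{ZK_1}}$, which via the $\wt{J}$-structure forces a second $\tilde V_2$ and hence a second $V_2$---and \emph{that} would already violate $\dim V_0(\chi)^{K_2^*}=2$. But once you run this step you have essentially reproduced the paper's contradiction argument, and the Whittaker-theoretic bound is no longer doing any work.
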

\begin{proof} By \cite[Theorem I.2.9(e)]{KP84}, $\dim V_0(\chi)^{K^*}=1$. We deduce $V_{0,0}\subset V_0(\chi)|_{\wt{ZK}}$. Note that $V_{0,0}=V_{0,0}^2$ and for $i\neq0$, $V_{1,i}|_{\wt{ZK_1}}=V_{0,i}^2\oplus V_{1,i}^i\oplus V_{1}^2$ by Lemma \ref{irreII} and Lemma \ref{irreIV}. As $V_0(\chi)$ has the structure of $\wt{J}$-representations, we have that $V_{1,1}^1\subset V_0(\chi)|_{\wt{ZK_1}}$ and hence $V_{1,1}\subset V_0(\chi)|_{\wt{ZK}}$ by Lemma \ref{irreIII} and \ref{irreIV}. Inductively by Lemma \ref{irreIV}, we deduce that for some $*\in\{\pm\}$, 
\[V_0(\chi)|_{\wt{J}}\supset \tilde{V}_{1,1}^*\bigoplus \tilde{V}_{1,0}\bigoplus \left(\bigoplus_{k\geq2}\tilde{V}_k\right).\]
The inclusion is actually an equality. Otherwise, we can apply the same argument to deduce 
\[V_0(\chi)|_{\wt{ZK}}=V_{0,0}\bigoplus V_{1,0}\bigoplus \left(\bigoplus_{k\geq2}V_k^{\oplus 3}\right)\bigoplus V_{1,1}^{\oplus2}\] 
from the $\wt{ZK}$-representation structure of $V_0(\chi)$ and  Lemma \ref{irreIII} and \ref{irreIV}. This in turn implies  
\[V(\chi)|_{\wt{J}}=\left(\bigoplus_{i=0}^{2} \tilde{V}_{1,i}\right)\bigoplus \left(\bigoplus_{k\geq2}\tilde{V}_k^{\oplus 3}\right).\] 
Since $\wt{J}$ and $\wt{ZK}$ generate $\wt{G}$, we have $V_0(\chi)=V(\chi)$ by Lemma \ref{K-type I} and \ref{K-type II}. A contradiction. 
Now the stated result follows.
\end{proof}
The following corollary  recovers \cite[Theorem 4.1]{Pat98} when $c=0$ and $n=3$.
\begin{cor} $\dim V_0(\chi)^{K^*}=1$ and
$\dim V_0(\chi)^{K_m^*}=m$ for $m\geq1$.
\end{cor}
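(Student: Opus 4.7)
The plan is to reduce the statement directly to the decomposition in Theorem \ref{K-type} together with the dimension formula in Lemma \ref{irreIII}, so the proof will be essentially a bookkeeping exercise.

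First I would invoke Theorem \ref{K-type}, which gives the $\widetilde{ZK}$-module decomposition
\[
V_0(\chi)|_{\wt{ZK}} \;=\; V_{0,0} \,\oplus\, V_{1,1} \,\oplus\, \bigoplus_{k\geq 2} V_k .
\]
Taking $K_m^*$-invariants commutes with the direct sum, so it suffices to compute $\dim V_{0,0}^{K_m^*}$, $\dim V_{1,1}^{K_m^*}$, and $\dim V_k^{K_m^*}$ for each $k\geq 2$.

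Next I would apply the dimension formula from Lemma \ref{irreIII}. Under the convention recorded just after that lemma, the second subscript in $V_{k,i}$ ($k=0,1$) encodes $a-b \bmod 3$. Hence:
\begin{itemize}
\item For $V_{0,0}$ ($k=0$, $a-b=0$): the condition $3\mid a-b$ is met, so $\dim V_{0,0}^{K_m^*}=1$ for every $m\geq 0$ (in particular for $K^* = K_0^*$).
\item For $V_{1,1}$ ($k=1$, $a-b=1$): the condition $3\mid a-b$ fails, so $\dim V_{1,1}^{K_m^*}=0$ for every $m\geq 0$.
\item For each $V_k$ with $k\geq 2$: the lemma gives $\dim V_k^{K_m^*}=1$ precisely when $2\leq k\leq m$, and $0$ otherwise.
\end{itemize}

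Summing these contributions yields
\[
\dim V_0(\chi)^{K^*} \;=\; 1+0+0 \;=\; 1,
\qquad
\dim V_0(\chi)^{K_m^*} \;=\; 1+0+(m-1) \;=\; m \quad (m\geq 1),
\]
which is the claim. Since all the nontrivial content is already packaged into Theorem \ref{K-type} and Lemma \ref{irreIII}, there is no real obstacle here; the only thing to be careful about is keeping the indexing conventions straight (in particular, that the ``$1$'' in $V_{1,1}$ stands for $a-b\equiv 1\bmod 3$, so this summand contributes nothing to the invariants), and noting that the case $m=0$ of the $K_m^*$ formula recovers the $K^*$ statement, consistent with the input $\dim V_0(\chi)^{K^*}=1$ used in the proof of Theorem \ref{K-type}.
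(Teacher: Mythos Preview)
Your proof is correct and follows exactly the approach implicit in the paper: the corollary is an immediate consequence of the $\wt{ZK}$-decomposition in Theorem \ref{K-type} together with the dimension formula of Lemma \ref{irreIII}, and your bookkeeping is accurate. The only cosmetic point is that the $K^*$ case is already quoted from \cite[Theorem I.2.9(e)]{KP84} in the proof of Theorem \ref{K-type}, so one may simply cite that rather than formally setting $m=0$ in Lemma \ref{irreIII}; either way the conclusion is the same.
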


\subsection{Multiplicity one of   co-periods}\label{Mul-one}
 Let $\chi_\pm$ be $\epsilon^\pm$-genuine characters of $\wt{Z}^{(c)}\wt{T_3}$.  Denote by $\omega_\pm$  the central character of $\pi_\pm:=V(\chi^w_\pm)$. Assume $\chi_{\pm}$ is exceptional and let $\sigma_\pm=V_0(\chi_\pm)$. Let $\sigma^{\pm}$ be the quotient of $\pi_{\pm}$ by $\sigma_{\pm}$. 
 Let $\sigma$ be an irreducible smooth admissible representation of $G$  with central character $\omega$. Recall that the center of $\wt{G}$ is $\wt{Z^{(c)}}$. Clearly unless the $\wt{Z^{(c)}}$-character $\omega\omega_+\omega_-$ is trivial, 
\[\Ext^i_G(\sigma \otimes \sigma_+ \otimes \sigma_-, \BC)=0,\quad \forall\ i\geq0. \]
In this subsection, we shall  assume $\omega\omega_+\omega_- = 1$ and consider \begin{center}
    $\Ext^i_G(\sigma \otimes\sigma_+ \otimes \sigma_-, \BC)$.
\end{center} 
We shall freely use the results on higher extension groups in \cite{Pra18, FP22}.
\begin{prop}\label{ub}Assume  $\sigma$ is essentially discrete. Then $$\dim\Hom_G(\sigma\otimes \sigma_+\otimes \pi_-,\BC)=3|3|^{-1/2}.$$

\end{prop}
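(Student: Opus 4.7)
The plan is to use Frobenius reciprocity to exploit the parabolic structure of $\pi_-$, reduce the problem to a calculation on the Jacquet module of $\sigma\otimes\sigma_+$, and then compute the relevant dimension by a Mackey decomposition. Since $\pi_-=V(\chi_-^w)=I_{\wt{B}_*}^{\wt{G}}(\chi_-^w\delta^{1/2})$ has smooth dual $V(\chi_-^{w,-1})$, Frobenius reciprocity combined with the triviality of the target character on $N$ gives
\[
\Hom_G\!\bigl(\sigma\otimes\sigma_+\otimes\pi_-,\BC\bigr)\;\cong\;\Hom_{\wt{T}_*}\!\bigl((\sigma\otimes\sigma_+)_N,\,\chi_-^{w,-1}\delta^{1/2}\bigr).
\]

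Proposition \ref{Jac of tensor} then supplies the short exact sequence of $\wt{T}$-modules
\[
0\to i_{\wt{Z}}^{\wt{T}}\bigl(\sigma_\psi\otimes\sigma_{+,\bar\psi}\bigr)\to(\sigma\otimes\sigma_+)_N\to\sigma_N\otimes\sigma_{+,N}\to 0,
\]
in which the sub-module is a compact induction because $N$ acts trivially on $\sigma_\psi\otimes\sigma_{+,\bar\psi}$ (as $\psi\bar\psi=1$). Since $\sigma$ is essentially discrete, $\sigma_N$ is either zero (supercuspidal case) or a one-dimensional character of $\wt{T}$ (Steinberg case). I plan to show that $\Hom_{\wt{T}_*}(\sigma_N\otimes\sigma_{+,N},\chi_-^{w,-1}\delta^{1/2})$ vanishes in each case, combining the explicit description $\sigma_{+,N}=I_{w\wt{T}_*w}^{\wt{T}}(\chi_+^w\delta^{1/2})$ from Proposition \ref{Jac} with the central-character constraint $\omega\omega_+\omega_-=1$ to rule out any character match.

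It then suffices to compute $\Hom_{\wt{T}_*}(i_{\wt{Z}}^{\wt{T}}V,\chi_-^{w,-1}\delta^{1/2})$ with $V:=\sigma_\psi\otimes\sigma_{+,\bar\psi}$, which is an irreducible genuine $\wt{Z}$-representation because $\sigma_\psi$ is one-dimensional and $\sigma_{+,\bar\psi}$ is irreducible by Proposition \ref{Jac}. I will realize Hom-elements as $\wt{Z}$- and $\wt{T}_*$-equivariant maps $f:\wt{T}\to V^\vee$; such an $f$ is determined by its values on a set of representatives for the double cosets $\wt{Z}\wt{T}_*\bs\wt{T}$, whose cardinality is
\[
[\wt{T}:\wt{Z}\wt{T}_*]\;=\;\frac{[\wt{T}:\wt{T}_*]}{[\wt{Z}:\wt{Z}_*]}\;=\;\frac{3d|3d|^{-1/2}}{d|d|^{-1/2}}\;=\;3|3|^{-1/2}
\]
by Proposition \ref{Jac} and Stone--von Neumann applied to the Heisenberg-type group $\wt{Z}$. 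At each representative $\gamma$, the compatibility condition forces $f(\gamma)$ into a one-dimensional eigenspace of $V^\vee$: Stone--von Neumann makes each conjugate $({}^\gamma V)|_{\wt{Z}_*}$ the multiplicity-free sum of all characters of $\wt{Z}_*$ with the prescribed central character on $\wt{Z^{(c)}}$, and $\chi_-^{w,-1}\delta^{1/2}|_{\wt{Z}_*}$ is one of these characters by the central-character assumption. Summing these one-dimensional contributions over all orbits yields the desired total.

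The principal obstacle is the Steinberg case---verifying that the quotient piece contributes nothing when $\sigma_N$ is a nonzero character requires an explicit identification of $\sigma_N$ and a careful comparison against the characters of the irreducible $\wt{T}$-representation $\sigma_{+,N}$ from Proposition \ref{Jac}. A secondary technical point in the Mackey step is that $\wt{Z}_*$ is not open in $\wt{T}_*$, so the per-orbit identification must be established by a direct distribution-theoretic argument rather than by appeal to the standard Frobenius reciprocity for open subgroups.
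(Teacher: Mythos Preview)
Your strategy matches the paper's: Frobenius reciprocity to pass to $\wt{T}$ (equivalently $\wt{T}_*$), the short exact sequence from Proposition~\ref{Jac of tensor}, vanishing of the quotient contribution, and a count on the compactly induced sub. Two remarks.

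First, the central-character constraint $\omega\omega_+\omega_-=1$ does \emph{not} rule out the Steinberg contribution---in fact it forces the $\wt{Z^{(c)}}$-characters of $\sigma_N\otimes\sigma_{+,N}$ and of $\chi_-^{w,-1}\delta^{1/2}$ to \emph{agree}. What kills the quotient is the exponent on $\wt{T_3}$: on $s(\diag\{x^3,x^{-3}\})$ one finds $\sigma_N\otimes\sigma_{+,N}$ acts by $|x|^8$ while the target acts by $|x|^4$, using that $\chi_\pm$ are exceptional and $\sigma_N=\xi\circ\det\cdot\delta$ for twisted Steinberg. This mismatch on the center $\wt{Z^{(c)}T_3}$ of $\wt{T}$ also forces $\Ext^1_{\wt{T}}=0$, so the long exact sequence really does give equality and not merely an injection.

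Second, your Mackey argument for the sub is correct but unnecessarily delicate. The paper instead first induces the target up to $\wt{T}$ (so one works with $\Hom_{\wt{T}}(-,\delta^{1/2}I_{\wt{T}_*}^{\wt{T}}\chi_-^{w,-1})$) and then applies Frobenius reciprocity for compact induction $i_{\wt{Z}}^{\wt{T}}$ to reduce to $\Hom_{\wt{Z}}(\sigma_\psi\otimes\sigma_{+,\bar\psi},(I_{\wt{T}_*}^{\wt{T}}\chi_-^{w,-1})|_{\wt{Z}})$. The restriction to $\wt{Z}$ is $3|3|^{-1/2}$ copies of the unique genuine irreducible with the correct $\wt{Z^{(c)}}$-character (Proposition~\ref{Jac}), and $\sigma_\psi\otimes\sigma_{+,\bar\psi}$ is one such copy, giving the count directly. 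This bypasses the double-coset bookkeeping and the distribution-theoretic issue you flag.
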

\begin{proof}
Note that by Frobenius reciprocity law, 
$$\Hom_G(\sigma\otimes \sigma_+\otimes \pi_-,\BC)=\Hom_{\wt{B}}(\sigma\otimes \sigma_+, \delta^{1/2}I_{\wt{T}_*}^{\wt{T}}\chi_-^{-1}).$$
Since  $\sigma$ is essentially discrete, one has \[\Hom_{\wt{T}}(\sigma_N\otimes \sigma_{+,N} , \delta^{1/2}I_{\wt{T}_*}^{\wt{T}}\chi_{-}^{w,-1})=0.\]
Thus by  Proposition \ref{Jac of tensor},
$$\Hom_G(\sigma\otimes \sigma_+\otimes \pi_-,\BC)=\Hom_{\wt{T}}((\sigma\otimes \sigma_+)_N, \delta^{1/2}I_{\wt{T}_*}^{\wt{T}}\chi_{-}^{w,-1})=\Hom_{\wt{T}}(i_{\wt{Z}}^{\wt{T}}\sigma_\psi\otimes\sigma_{+,\bar{\psi}}, \delta^{1/2}I_{\wt{T}_*}^{\wt{T}}\chi_{-}^{w,-1}).$$
By Frobenius reciprocity and Proposition \ref{Jac}, \[\dim\Hom_{\wt{T}}(i_{\wt{Z}}^{\wt{T}}\sigma_\psi\otimes\sigma_{+,\bar{\psi}}, \delta^{1/2}I_{\wt{T}_*}^{\wt{T}}\chi_{-}^{-1})=\dim\Hom_{\wt{Z}}(\sigma_\psi\otimes\sigma_{+,\bar{\psi}}, I_{\wt{T}_*}^{\wt{T}}\chi_{-}^{w,-1})=3|3|^{-1/2}.\]
\end{proof}

\subsubsection{Non-supercuspidal case}\label{Non-supercuspidal}
We first deal with the case $\sigma$ is non-supercuspidal.
 \begin{prop}\label{PS}Assume  $\sigma=I_B^G\theta$ (not necessarily irreducible). Then  for $\sigma_1=\sigma_+$, $\sigma^+$ and $\pi_+$, $\sigma_2=\sigma_-$, $\sigma^-$ and $\pi_-$
 $$\Ext^i_G(\sigma\otimes \sigma_1\otimes \sigma_2,\BC)=0,\quad \forall i\geq2;$$ 
$$\dim\Hom_G(\sigma\otimes\sigma_1\otimes \sigma_2,\BC)=\dim\Ext^1_G(\sigma\otimes\sigma_1\otimes \sigma_2,\BC)=\dim \Hom_{\wt{Z}}(\sigma_{1,\psi}, \sigma_{2,\bar{\psi}}^\vee\otimes\theta^{-1}).$$
\end{prop}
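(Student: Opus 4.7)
The plan is to apply Frobenius reciprocity to move the Ext computation from $G$ down to the torus $\wt{T}$, then use the short exact sequence from Proposition \ref{Jac of tensor} to split off the Whittaker contribution, showing the remaining ``finite-dimensional'' Jacquet piece contributes nothing in any degree.

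Since $\sigma = I_B^G\theta$ has smooth dual $\sigma^\vee \cong I_B^G\theta^{-1}$, the Bernstein second adjunction---which extends to higher Ext because both parabolic induction and the Jacquet functor are exact---yields
\[
\Ext^i_G(\sigma\otimes\sigma_1\otimes\sigma_2,\BC) \cong \Ext^i_G(\sigma_1\otimes\sigma_2, I_B^G\theta^{-1}) \cong \Ext^i_{\wt{T}}\bigl((\sigma_1\otimes\sigma_2)_{\bar N}, \theta^{-1}\bigr).
\]
The analogue of Proposition \ref{Jac of tensor} for the opposite unipotent $\bar N$ produces a short exact sequence of $\wt T$-modules
\[
0 \lra i_{\wt{Z}}^{\wt{T}}(\sigma_{1,\psi^-}\otimes\sigma_{2,\bar\psi^-}) \lra (\sigma_1\otimes\sigma_2)_{\bar N} \lra \sigma_{1,\bar N}\otimes\sigma_{2,\bar N} \lra 0,
\]
to which I apply $\Ext^*_{\wt{T}}(-,\theta^{-1})$, obtaining a long exact sequence that splits into two independent computations.

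The main technical step will be to show $\Ext^i_{\wt{T}}(\sigma_{1,\bar N}\otimes\sigma_{2,\bar N},\theta^{-1}) = 0$ for every $i\geq 0$. By Proposition \ref{Jac}, each $\sigma_{*,\bar N}$ is an irreducible $\wt T$-representation induced from an explicit character of the maximal abelian $\wt{T_*}$ constructed from $\chi_*$. Since $\wt{T_*}$ has finite index in $\wt{T}$, Frobenius reduces the problem to Ext between characters of the abelian group $\wt{T_*}$, and these vanish in all degrees as soon as the characters disagree. The exceptional condition $\chi_\pm(s(\diag\{x^3,x^{-3}\})) = |x|^{\pm 1}$ forces the characters appearing in $\sigma_{1,\bar N}\otimes\sigma_{2,\bar N}$ to take a predetermined value on $s(\diag\{x^3,x^{-3}\})\in\wt{T_3}$ that cannot be matched by $\theta^{-1}$ for any $\theta$ compatible with the central-character constraint $\omega\omega_+\omega_-=1$. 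I expect this character-bookkeeping to be the hardest part, since one must uniformly treat all the subquotient choices $\sigma_1\in\{\sigma_+,\sigma^+,\pi_+\}$ and $\sigma_2\in\{\sigma_-,\sigma^-,\pi_-\}$.

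Once the Jacquet quotient is eliminated, Frobenius for compact induction handles the Whittaker piece: after identifying $\sigma_{*,\psi^-}$ with $\sigma_{*,\psi}$ via the long Weyl element, we obtain
\[
\Ext^i_{\wt{T}}\bigl(i_{\wt{Z}}^{\wt{T}}(\sigma_{1,\psi}\otimes\sigma_{2,\bar\psi}),\theta^{-1}\bigr) \cong \Ext^i_{\wt{Z}}\bigl(\sigma_{1,\psi}\otimes\sigma_{2,\bar\psi}, \theta^{-1}|_{\wt{Z}}\bigr),
\]
where we also use that $\delta|_Z$ is trivial. Because $\wt{Z}$ is a central extension of $F^\times$ by the finite $\mu_3$, its smooth cohomological dimension equals one, so the right-hand side vanishes for $i\geq 2$, yielding $\Ext^i_G(\sigma\otimes\sigma_1\otimes\sigma_2,\BC) = 0$ for $i\geq 2$; for $i=0$ it returns precisely $\Hom_{\wt{Z}}(\sigma_{1,\psi}, \sigma_{2,\bar\psi}^\vee\otimes\theta^{-1})$. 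Finally, the equality $\dim\Hom = \dim\Ext^1$ will follow from the vanishing of the Euler--Poincar\'e characteristic via Schneider--Stuhler duality, which applies because $\sigma$ is parabolically induced and hence non-cuspidal.
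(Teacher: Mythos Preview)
Your overall architecture matches the paper's: reduce to the torus via Frobenius, split off the Whittaker piece using Proposition~\ref{Jac of tensor}, and show the finite-dimensional Jacquet-module piece contributes nothing. The gap is in this last step.

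You assert that the exceptional condition on $\chi_\pm$ forces the characters appearing in $\sigma_{1,\bar N}\otimes\sigma_{2,\bar N}$ to be unmatchable by $\theta^{-1}$. This is not true. First, both $\chi_+$ and $\chi_-$ are exceptional, so both satisfy $\chi_\pm(s(\diag\{x^3,x^{-3}\}))=|x|$; your $|x|^{\pm 1}$ is a slip. Second, the central-character constraint $\omega\omega_+\omega_-=1$ only pins down $\theta_1\theta_2$ on $\wt{Z^{(c)}}$ and says nothing about $\theta_1\theta_2^{-1}$. The paper works out that the matching condition $\chi_+^?\chi_-^?\theta\delta^{1/2}=1$ on $\wt{Z^{(c)}}\wt{T_3}$ (where $?\in\{w,\emptyset\}$ according to which subquotient $\sigma_1,\sigma_2$ you pick) does occur, precisely when $\theta(\diag\{x^{-3},x^3\})=|x|^5,|x|^3,$ or $|x|$. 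For these $\theta$, the Jacquet piece is \emph{not} trivially zero, and your argument breaks down.

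The paper's fix, which you are missing, is to observe that in each of these problematic cases $I_B^G\theta$ is irreducible and hence isomorphic to $I_B^G\theta^w$; re-running the entire computation with $\theta$ replaced by $\theta^w$ swaps the roles so that the Jacquet piece now vanishes. Without this model-switching trick the argument is incomplete.

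Two smaller points. The adjunction you want is ordinary Frobenius reciprocity $\Hom_G(\sigma_1\otimes\sigma_2, I_B^G\theta^{-1})\cong\Hom_T((\sigma_1\otimes\sigma_2)_N,\delta^{1/2}\theta^{-1})$, using the Jacquet functor for the same Borel; Bernstein's second adjunction goes the other way (induced representation as source). You also dropped the $\delta^{1/2}$ on the torus side. These are easily repaired, but the model-switching step is the missing idea.
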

\begin{proof} Note that  \begin{align*}
\Hom_G(\sigma\otimes\sigma_1\otimes \sigma_2,\BC) =\Hom_{G}( \sigma_1\otimes \sigma_2, \sigma^\vee)=\Hom_{B}(\sigma_1\otimes \sigma_2, \delta^{1/2}\theta^{-1})=\Hom_{T}( \big(\sigma_1\otimes \sigma_2\big)_N,\delta^{1/2}\theta^{-1})
\end{align*}
By Proposition \ref{Jac of tensor}, we have the long exact sequence
\begin{align*}
    0&\to \Hom_T(\sigma_{1,N}\otimes \sigma_{2,N},\delta^{1/2}\theta^{-1})\to \Hom_{T}( \big(\sigma_1\otimes\sigma_2\big)_N,\delta^{1/2}\theta^{-1})\to \Hom_T(i_Z^T\sigma_{1,\psi}\otimes \sigma_{2,\bar{\psi}}, \delta^{1/2}\theta^{-1})\\
  & \to \Ext^1_T(\sigma_{1,N}\otimes \sigma_{2,N}, \delta^{1/2}\theta^{-1})\to \Ext^1_{T}( \big(\sigma_1\otimes\sigma_2\big)_N,\delta^{1/2}\theta^{-1}) \to \Ext^1_T(i_Z^T\sigma_{1,\psi}\otimes \sigma_{2,\bar{\psi}}, \delta^{1/2}\theta^{-1})\\
 & \to \Ext^2_T(\sigma_{1,N}\otimes \sigma_{2,N}, \delta^{1/2}\theta^{-1})
     \to \Ext^2_{T}(\big(\sigma_1\otimes\sigma_2\big)_N,\delta^{1/2}\theta^{-1})
     \to \Ext^2_T(i_Z^T\sigma_{1,\psi}\otimes \sigma_{2,\bar{\psi}}, \delta^{1/2}\theta^{-1})\to0.
\end{align*}
  By \cite[Prop 2.8]{Pra18},$$\Ext^i_T(i_Z^T\left(\sigma_{2,\psi}\otimes \sigma_{2,\bar{\psi}}\right), \delta^{1/2}\theta^{-1})\cong\Ext^i_Z(\sigma_{1,\psi}\otimes \sigma_{2,\bar{\psi}},\theta^{-1}),$$ which is zero when $i=2$ by \cite[Prop 2.9]{Pra18}. Together with the Schneider-Stuhler duality, we have
\begin{align*}\dim\Hom_T(i_Z^T\left(\sigma_{1,\psi}\otimes \sigma_{2,\bar{\psi}}\right), \delta^{1/2}\theta^{-1})=\dim\Ext^1_T(i_Z^T\left(\sigma_{1,\psi}\otimes \sigma_{2,\bar{\psi}}\right), \delta^{1/2}\theta^{-1})=\dim\Hom_{\wt{Z}}(\sigma_{1,\psi}, \sigma_{2,\bar{\psi}}^\vee\otimes\theta^{-1}).
   \end{align*}
 Again by Schneider-Stuhler duality theorem, one has 
$$\dim\Hom_T(\sigma_{1,N}\otimes\sigma_{2,N}, \theta^{-1}\delta^{1/2})=\dim\Ext^2_T(\sigma_{1,N}\otimes\sigma_{2,N}, \theta^{-1}\delta^{1/2}).$$
 By \cite[Prop 2.9]{Pra18}, we have
 \begin{align*} 
    \dim\Ext^1_T(\sigma_{1,N}\otimes\sigma_{2,N}, \theta^{-1}\delta^{1/2})=\dim\Hom_T(\sigma_{1,N}\otimes\sigma_{2,N}, \theta^{-1}\delta^{1/2})+\dim\Ext^2_T(\sigma_{1,N}\otimes\sigma_{2,N}, \theta^{-1}\delta^{1/2}).
\end{align*}
By Proposition \ref{Jac}, one has 
$$\dim\Hom_T(\sigma_{1,N}\otimes\sigma_{2,N}, \theta^{-1}\delta^{1/2})
=\dim\Hom_{\wt{T}}(\sigma_{1,N},\sigma_{2,N}^\vee\otimes\theta^{-1}\delta^{1/2})$$
which is non-zero only if on $\wt{Z^{(c)}}\wt{T_3}$, some of the following equalities hold \[\chi_+^w\chi_{-}^w\theta\delta^{1/2}=1,\quad \chi_+\chi_{-}\theta\delta^{1/2}=1,\quad \chi_+^w\chi_{-}\theta\delta^{1/2}=1,\quad \chi_+\chi_{-}^w\theta\delta^{1/2}=1\]
Since $\chi_{\pm}$ is exceptional, this holds only if 
\[\theta(\diag\{x^{-3},x^{3}\})=|x|^{3+2},\ |x|^3,\ |x|^{3-2}\]
In all these cases, $\sigma=I_B^G\theta$ is irreducible and isomorphic to $I_B^G\theta^w$. By considering the model $I_B^G\theta^w$, we find 
$\dim\Ext^i_T(\sigma_{1,N}\otimes\sigma_{2,N},\theta^{-1}\delta^{1/2})=0$ and the stated result follows from the long exact sequence.
\end{proof}

By Proposition \ref{Jac}, we have that 
 $\dim\Hom_{\wt{Z}}(\pi_{+,\psi},\pi_{-,\bar{\psi}}^\vee\otimes\theta^{-1})=9|3|^{-1}$ and \[\dim\Hom_{\wt{Z}}(\sigma_{+,\psi},\pi_{-,\bar{\psi}}^\vee\otimes\theta^{-1})=3|3|^{-1/2},\quad \dim\Hom_{\wt{Z}}(\sigma^+_{\psi},\pi_{-,\bar{\psi}}^\vee\otimes\theta^{-1})=9|3|^{-1}-3|3|^{-1/2},\]
 \[\dim\Hom_{\wt{Z}}(\sigma_{+,\psi},\sigma_{-,\bar{\psi}}^\vee\otimes\theta^{-1})=1,\quad \dim\Hom_{\wt{Z}}(\sigma^+_{\psi},\sigma^{-,\vee}_{\bar{\psi}}\otimes\theta^{-})=9|3|^{-1}-6|3|^{-1/2}+1.\]

\begin{prop}\label{St} Assume  $\sigma$ is  special and realize it as  the quotient of $I_B^G\theta$ for some character $\theta$. Then for $\sigma_1=\sigma_+$, $\sigma^+$ and $\pi_+$, $\sigma_2=\sigma_-$, $\sigma^-$ and $\pi_-$,
 $$\Ext^i_G(\sigma\otimes \sigma_1\otimes \sigma_2,\BC)=0,\quad \forall i\geq2;$$ 
$$\dim\Hom_G(\sigma\otimes\sigma_1\otimes \sigma_2,\BC)=\dim\Ext^1_G(\sigma\otimes\sigma_1\otimes \sigma_2,\BC)=\dim \Hom_{\wt{Z}}(\sigma_{1,\psi}, \sigma_{2,\bar{\psi}}^\vee\otimes\theta^{-1}).$$
\end{prop}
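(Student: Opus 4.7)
The plan is to reduce Proposition~\ref{St} to the principal series case already established in Proposition~\ref{PS} via a long exact sequence. Since $\sigma$ is special, it fits in a short exact sequence $0 \to \chi \to I_B^G \theta \to \sigma \to 0$ of $G$-representations, with $\chi$ the one-dimensional character subrepresentation realizing the reducibility of the parabolic induction. Tensoring with $\sigma_1 \otimes \sigma_2$ is exact, and applying $\Hom_G(-, \BC)$ produces a long exact sequence relating the desired $\Ext^i_G(\sigma \otimes \sigma_1 \otimes \sigma_2, \BC)$, the already-known $\Ext^i_G(I_B^G\theta \otimes \sigma_1 \otimes \sigma_2, \BC)$, and the character terms $\Ext^i_G(\chi \otimes \sigma_1 \otimes \sigma_2, \BC) = \Ext^i_G(\sigma_1 \otimes \sigma_2, \chi^{-1})$. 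By Proposition~\ref{PS} the middle terms vanish for $i \geq 2$ and satisfy $\dim\Hom = \dim\Ext^1 = \dim\Hom_{\wt{Z}}(\sigma_{1,\psi}, \sigma_{2,\bar\psi}^\vee \otimes \theta^{-1})$, so the proof reduces to showing that each character term vanishes.

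For this I would exploit the Schneider-Stuhler duality theorem: since $G$ modulo its center has $F$-split rank one, it provides the identical vanishing of $\Ext^i_G$ for $i \geq 2$, together with an isomorphism
\[\Ext^1_G(\sigma_1 \otimes \sigma_2, \chi^{-1}) \cong \Hom_G(\chi^{-1}, \sigma_1^\vee \otimes \sigma_2^\vee)^\vee\]
(up to the Aubert-Zelevinsky twist, which preserves dimensions for characters). Both the $\Hom$ and $\Ext^1$ thus reduce to showing that $\chi^{-1}$ appears neither as a $G$-quotient of $\sigma_1 \otimes \sigma_2$ nor as a $G$-subrepresentation of $\sigma_1^\vee \otimes \sigma_2^\vee$. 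I would verify this by descending to Jacquet modules: since a character equals its own Jacquet module, any such $G$-morphism yields a nonzero $T$-morphism on $N$-coinvariants. Applying the exact sequence of Proposition~\ref{Jac of tensor} together with the explicit description of $\sigma_{\pm, N}$ in Proposition~\ref{Jac} reduces the question to an exponent comparison analogous to the final paragraph of the proof of Proposition~\ref{PS}, now with $\chi|_T$ in place of $\delta^{1/2}\theta^{-1}$; the central-character matching for the compactly induced piece $i_{\wt{Z}N}^{\wt{B}}(\sigma_{1,\psi} \otimes \sigma_{2,\bar\psi})$ is handled simultaneously by Frobenius reciprocity.

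Once the character vanishing is established, the long exact sequence collapses into isomorphisms $\Ext^i_G(\sigma \otimes \sigma_1 \otimes \sigma_2, \BC) \cong \Ext^i_G(I_B^G\theta \otimes \sigma_1 \otimes \sigma_2, \BC)$ in every degree, yielding the stated formulas. The main technical difficulty is the exponent comparison: whereas in Proposition~\ref{PS} the three forbidden exponents $|x|^{3 \pm 2}, |x|^3$ were ruled out by invoking the irreducibility of $I_B^G\theta$, here $I_B^G\theta$ is reducible, so one must verify directly that a special $\theta = \chi'\delta^{\pm 1/2}$ produces $\theta(\diag\{x^{-3}, x^3\}) = |x|^{\mp 3}$, which is distinct from all three forbidden values; additional care is required because the comparison takes place on $\wt{T_3}$ rather than the full torus. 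Executing this finite exponent check closes the character vanishing and hence the proof.
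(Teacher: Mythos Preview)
Your long exact sequence setup is sound, but the heart of your argument---that the character terms $\Ext^i_G(\chi\otimes\sigma_1\otimes\sigma_2,\BC)$ all vanish---is false in general, and the exponent check you outline does not detect the obstruction.

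Concretely: take $\sigma_1=\sigma_+$, $\sigma_2=\sigma_-$, write $\chi=\xi\circ\det$ for the one-dimensional kernel. Then $\Hom_G(\chi\otimes\sigma_+\otimes\sigma_-,\BC)=\Hom_{\wt G}(\sigma_+,\sigma_-^\vee\otimes\xi^{-1}\circ\det)$, which is nonzero precisely when $\chi_+\chi_-^w\cdot\xi\circ\det=1$ on $\wt{Z^{(c)}}\wt{T_3}$ (this is exactly the computation recorded in Lemma~\ref{one-dim}). That condition is perfectly compatible with all the standing hypotheses: for instance, with $\xi=1$ one may simply take $\chi_+=(\chi_-^{-1})^w$, which is exceptional, $\epsilon^+$-genuine, and satisfies $\omega_+\omega_-=1$. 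Your Jacquet-module check only controls the finite piece $\sigma_{1,N}\otimes\sigma_{2,N}$; the compactly induced piece $i_{\wt Z}^{\wt T}(\sigma_{1,\psi}\otimes\sigma_{2,\bar\psi})$ always produces a nonzero $\wt Z$-Hom (the central characters match by $\omega\omega_+\omega_-=1$), and in the situation just described this lifts to a genuine nonzero $G$-Hom. So the long exact sequence does not collapse, and your argument cannot conclude.

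The paper takes a different route that avoids the character terms altogether. Since a special $\sigma$ is essentially discrete, Proposition~\ref{ub} applies and gives directly $\dim\Hom_G(\sigma\otimes\sigma_+\otimes\pi_-,\BC)=3|3|^{-1/2}$. The surjection $I_B^G\theta\twoheadrightarrow\sigma$ together with Proposition~\ref{PS} gives the upper bounds
\[
\dim\Hom_G(\sigma\otimes\sigma_+\otimes\sigma_-,\BC)\le\dim\Hom_{\wt Z}(\sigma_{+,\psi},\sigma_{-,\bar\psi}^\vee\otimes\theta^{-1}),\qquad
\dim\Hom_G(\sigma\otimes\sigma_+\otimes\sigma^-,\BC)\le\dim\Hom_{\wt Z}(\sigma_{+,\psi},\sigma^{-,\vee}_{\bar\psi}\otimes\theta^{-1}),
\]
and by the numerology preceding Proposition~\ref{St} these two upper bounds sum to $3|3|^{-1/2}$. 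The filtration $0\to\sigma_-\to\pi_-\to\sigma^-\to 0$ then forces both inequalities to be equalities. This sandwich argument is what you are missing; it replaces the (false) vanishing claim with an exact count coming from Proposition~\ref{ub}.
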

\begin{proof}Since  $\sigma$ is the quotient of $I_B^G\theta$,  we deduce (together with Proposition \ref{PS})
\[\dim \Hom_G(\sigma_+\otimes \sigma^-\otimes \sigma, \BC)\leq \dim\Hom(\sigma_{+,\psi},\sigma^{-,\vee}_{\bar{\psi}}\otimes\theta^{-1}),\] \[\dim \Hom_G(\sigma_+\otimes \sigma_-\otimes \sigma, \BC)\leq\dim\Hom(\sigma_{+,\psi},\sigma^{\vee}_{-,\bar{\psi}}\otimes\theta^{-1}).\]
Thus by Proposition \ref{ub} and the numerology above, we deduce all the inequalities are equalities and the stated result follows.
\end{proof}

 Summarizing Proposition \ref{PS} and \ref{St}, we deduce
 \begin{prop}\label{n-SC} When $\sigma$ is non-supercuspidal and generic, 
 \[\dim \Hom_G(\sigma\otimes\sigma_+\otimes\sigma_-,\BC)=1.\]
 \end{prop}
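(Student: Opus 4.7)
The plan is to combine Propositions \ref{PS} and \ref{St} with the explicit numerology displayed just before Proposition \ref{St}. Since $\sigma$ is generic and non-supercuspidal, it falls into exactly one of two classes: either $\sigma$ is an irreducible principal series $I_B^G\theta$, or $\sigma$ is special and appears as the unique infinite-dimensional irreducible quotient of some reducible $I_B^G\theta$. These two classes are covered respectively by Proposition \ref{PS} (with $\sigma_1=\sigma_+$, $\sigma_2=\sigma_-$) and Proposition \ref{St}. In both cases the conclusion gives the same identity
\[
\dim\Hom_G(\sigma\otimes\sigma_+\otimes\sigma_-,\BC)=\dim\Hom_{\wt{Z}}(\sigma_{+,\psi},\sigma_{-,\bar\psi}^\vee\otimes\theta^{-1}),
\]
so the problem reduces to computing the right-hand side.

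To evaluate the right-hand side, I would invoke Proposition \ref{Jac}, which asserts that both $\sigma_{+,\psi}=V_0(\chi_+)_\psi$ and $\sigma_{-,\bar\psi}=V_0(\chi_-)_{\bar\psi}$ are irreducible genuine $\wt{Z}$-representations; the twist by the (one-dimensional) character $\theta^{-1}$ preserves irreducibility. Since $\wt{Z}$ has center $\wt{Z^{(c)}}$ on which irreducible genuine representations must have matching central characters for the Hom space to be nonzero, the dimension is at most one by a Schur's lemma type argument (indeed the numerology recorded between Propositions \ref{PS} and \ref{St} already tabulates the answer as $1$).

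It remains to verify that the $\wt{Z^{(c)}}$-central characters of $\sigma_{+,\psi}$ and of $\sigma_{-,\bar\psi}^\vee\otimes\theta^{-1}$ do match. This is where the global assumption $\omega\omega_+\omega_-=1$ enters: the central character of $\sigma=I_B^G\theta$ is determined by $\theta$, and the exceptional characters $\chi_\pm$ determine the central characters $\omega_\pm$ of $\sigma_\pm$. The compatibility $\omega\omega_+\omega_-=1$ then precisely forces the twist $\sigma_{-,\bar\psi}^\vee\otimes\theta^{-1}$ to realize the same $\wt{Z^{(c)}}$-character as $\sigma_{+,\psi}$, so the Hom space is nonzero and hence exactly one-dimensional.

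The main obstacle — or rather the content that has already been handled by the earlier propositions — is the vanishing of the higher $\Ext$-contributions and of the $\sigma_N\otimes\sigma_{\pm,N}$-terms in the long exact sequence coming from Proposition \ref{Jac of tensor}. Once those are dispensed with by Propositions \ref{PS} and \ref{St}, the present statement is essentially a bookkeeping consequence of the irreducibility in Proposition \ref{Jac} and the central character matching. Thus the proof will be a short paragraph that dispatches the two cases and appeals to the tabulated dimension count.
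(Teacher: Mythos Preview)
Your proposal is correct and follows the same approach as the paper: the paper's proof is literally the one-line remark ``Summarizing Proposition \ref{PS} and \ref{St}, we deduce\ldots'', relying on the already-displayed numerology $\dim\Hom_{\wt{Z}}(\sigma_{+,\psi},\sigma_{-,\bar\psi}^\vee\otimes\theta^{-1})=1$, and your write-up simply unpacks why that equality holds (irreducibility from Proposition \ref{Jac} plus the Stone--von Neumann determination by the $\wt{Z^{(c)}}$-character, matched by the standing hypothesis $\omega\omega_+\omega_-=1$). One cosmetic slip: you call $\omega\omega_+\omega_-=1$ a ``global assumption'', but in this section it is the local standing hypothesis.
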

 For completeness, we record:
 \begin{lem}\label{one-dim}Let $\xi$ be a character of $F^\times$. Then $$\Ext^i_G(\xi\circ\det\otimes \sigma_+\otimes \sigma_-,\BC)=0,\quad \forall i\geq2.$$ 
$$\dim\Hom_G(\xi\circ\det\otimes \sigma_{+}\otimes \sigma_{-},\BC)=\dim\Ext^1_G(\xi\circ\det\otimes \sigma_{+}\otimes \sigma_{-},\BC)\leq1.$$
where the equality holds if and only if on $\wt{Z^{(c)}}\wt{T_3}$, $\chi_+\chi_{-}^w\cdot\xi\circ\det=1$. 
\end{lem}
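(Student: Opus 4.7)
The plan is to bootstrap from Propositions \ref{PS} and \ref{St} by embedding $\xi\circ\det$ in the standard short exact sequence
\[0 \lra \xi\circ\det \lra \rho \lra \St\otimes\xi \lra 0, \qquad \rho := I_B^G\bigl(\xi|\cdot|^{-1/2}\otimes\xi|\cdot|^{1/2}\bigr),\]
in which $\xi\circ\det$ appears as the unique irreducible subrepresentation of the reducible principal series $\rho$. Tensoring with $\sigma_+\otimes\sigma_-$ and applying $\Ext^\bullet_G(-,\BC)$ yields a long exact sequence connecting the three Ext-complexes. By Propositions \ref{PS} and \ref{St}, the outer two contributions vanish in all degrees $\geq 2$, so squeezing forces $\Ext^i_G(\xi\circ\det\otimes\sigma_+\otimes\sigma_-,\BC)=0$ for $i\geq 2$. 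Moreover, both propositions identify their $\Hom$ and $\Ext^1$ dimensions with the same quantity $d=\dim\Hom_{\wt Z}(\sigma_{+,\psi},\sigma_{-,\bar\psi}^\vee\otimes\theta^{-1}|_{\wt Z})$, since the two inducing characters (for $\rho$ and for the principal series with $\St\otimes\xi$ as quotient) are Weyl conjugates and agree on the center, restricting to $\xi^{-2}$ on $\wt Z$. Because $\sigma_{\pm,\psi}$ are irreducible $\wt Z$-representations by Proposition \ref{Jac}, one has $d\leq 1$. An Euler-characteristic count along the six-term segment gives $\dim\Hom=\dim\Ext^1$, and the observation that the initial injection $\Hom_G(\St\otimes\xi\otimes\cdots)\hookrightarrow\Hom_G(\rho\otimes\cdots)$ between two spaces of dimension $\leq 1$ must be an isomorphism then forces $\dim\Hom\leq\dim\Ext^1_G(\St\otimes\xi\otimes\cdots)\leq 1$.

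For the precise non-vanishing condition, I would argue directly rather than try to compute the delicate connecting map in the LES. Rewrite
\[\Hom_G(\xi\circ\det\otimes\sigma_+\otimes\sigma_-,\BC)\;=\;\Hom_{\wt G}\bigl(\sigma_+,\;\sigma_-^\vee\otimes\xi^{-1}\circ\det\bigr).\]
Using the duality criterion $V_0(\chi)^\vee\cong V_0(\chi^{-w})$ recalled in the excerpt, $\sigma_-^\vee=V_0(\chi_-^{-w})$; twisting by $\xi^{-1}\circ\det$ preserves the exceptional condition (since $\det$ is trivial on $s(\diag\{x^3,x^{-3}\})$), producing the exceptional representation $V_0(\chi_-^{-w}\cdot\xi^{-1}\circ\det)$. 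Schur's lemma then bounds the dimension by $1$, with equality if and only if $V_0(\chi_+)\cong V_0(\chi_-^{-w}\cdot\xi^{-1}\circ\det)$ as $\wt G$-representations.

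The main obstacle is translating this isomorphism of exceptional representations into the stated condition. Two exceptional representations $V_0(\chi_1)$ and $V_0(\chi_2)$ are isomorphic if and only if $\chi_1=\chi_2$ on $\wt{Z^{(c)}}\wt{T_3}$: by Proposition \ref{Jac} their Jacquet modules are the irreducible genuine $\wt T$-representations $I_{w\wt{T_*}w}^{\wt T}(\chi_i^w\delta^{1/2})$, which by the Stone--von Neumann theory recalled in the excerpt are parametrized by their restriction to the center $\wt{Z^{(c)}}\wt{T_3}$. Applying this with $\chi_1=\chi_+$ and $\chi_2=\chi_-^{-w}\cdot\xi^{-1}\circ\det$ gives exactly $\chi_+\chi_-^w\cdot\xi\circ\det=1$ on $\wt{Z^{(c)}}\wt{T_3}$. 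Once this isomorphism criterion is cleanly in place, all remaining steps are formal.
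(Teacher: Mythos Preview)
Your argument is correct and follows the paper closely: same short exact sequence, same use of Propositions~\ref{PS} and~\ref{St} for the vanishing of $\Ext^{\geq 2}$, and the same direct rewriting $\Hom_G(\xi\circ\det\otimes\sigma_+\otimes\sigma_-,\BC)=\Hom_{\wt G}(\sigma_+,\sigma_-^\vee\otimes\xi^{-1}\circ\det)$ for the iff condition (which the paper records simply as ``easy to see''). The one real difference is how you obtain $\dim\Hom=\dim\Ext^1$: you take the Euler characteristic of the six-term exact sequence (the four outer terms all have dimension $d$ by Propositions~\ref{PS} and~\ref{St}, forcing equality of the middle two), whereas the paper invokes Schneider--Stuhler duality to rewrite $\Ext^1_G(\xi\circ\det\otimes\sigma_+\otimes\sigma_-,\BC)$ as $\Ext^1_G(\St,\xi\circ\det\otimes\sigma_+\otimes\sigma_-)$ and then descends to $Z\bs G$ to deduce $\dim\Ext^1\geq\dim\Hom$. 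Your Euler-characteristic route is a bit cleaner here and yields the equality in one stroke, while the paper's chain as written only produces the inequality. One minor wording point: ``an injection between two spaces of dimension $\leq 1$ must be an isomorphism'' is not quite the right justification---what you actually use is that Propositions~\ref{PS} and~\ref{St} give $\dim\Hom_G(\St\otimes\xi\otimes\cdots)=\dim\Hom_G(\rho\otimes\cdots)=d$ \emph{exactly}, so the injection is between spaces of the same finite dimension.
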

\begin{proof}Easy to see that$$\dim\Hom_{\wt{G}}(\xi\circ\det\otimes \sigma_{+}\otimes \sigma_{-},\BC)\leq1$$
with the equality holds if and only if  $\chi_+\chi_{-}^w\cdot \xi\circ\det=1$ on $\wt{Z^{(c)}}\wt{T_3}$. By the short exact sequence $$0\lra \xi\circ\det\lra I_B^G\delta^{-1/2}\otimes\xi\circ\det\lra\St\otimes\xi\circ\det\lra0$$
and Proposition  \ref{PS}, one has  that 
$$\Ext^i_G(\xi\circ\det\otimes \sigma_{+}\otimes \sigma_{-},\BC)=0,\ \forall\ i\geq2$$
and $\dim\Ext^1_G(\xi\circ\det\otimes \sigma_{+}\otimes \sigma_{-},\BC)\leq1$. Moreover by Schneider-Stuhler duality, one has 
\begin{align*}
    \dim\Ext^1_G(\xi\circ\det\otimes \sigma_{+}\otimes \sigma_{-},\BC)&=\dim\Ext^1_G(\St, \xi\circ\det\otimes  \sigma_{+}\otimes \sigma_{-})\\
   &\geq \dim\Ext^1_{Z\bs G}(\St, \big(\xi\circ\det\otimes \sigma_{+}\otimes \sigma_{-}\big)^Z)\\
 &=\dim\Hom_{Z\bs G}(\big(\xi\circ\det\otimes  \sigma_{+}\otimes \sigma_{-}\big)^Z,\BC)\\
  &=\dim\Hom_G(\xi\circ\det\otimes \sigma_{+}\otimes \sigma_{-},\BC)\\
 &=\dim\Hom_{\wt{G}}(\sigma_+, \sigma_{-}^\vee\otimes \xi^{-1}\circ\det).
\end{align*}
\end{proof}
\subsubsection{Supercuspidal case}\label{supercuspidal case}
Now we  deal with the case $\sigma$ is supercuspidal under the assumption $|3|=1$. We will use notations in  Subection \ref{K-type Section} and  add the superscript $\pm$ to all components of the restriction of $\sigma_\pm$ to $\CK=ZK$ or $J$. We shall freely use the theory of Kutzko (see \cite[Section 6]{Pra90} for details) on $K$-types. 
\begin{lem}\label{very cuspidal}For any very cuspidal $\CK$-representation $\tau$ and any character $\xi$ of $B(\CO)$ of level $\leq1$, 
\[\Hom_{K_1}(I_{K_m}^{K_1}\xi,\tau)=\Hom_{K_1}(I_{K_{1,m}}^{K_1}\xi,\tau)=0.\]
\end{lem}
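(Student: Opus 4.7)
The plan is to apply Frobenius reciprocity to convert each vanishing statement into a branching question for $\tau$, and then invoke the defining property of ``very cuspidal''.

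Since $K_1$ is compact and $K_m, K_{1,m}$ are open subgroups of $K_1$, Frobenius reciprocity gives
\[
\Hom_{K_1}(I_{K_m}^{K_1}\xi,\tau) \cong \Hom_{K_m}(\xi, \tau|_{K_m}), \qquad \Hom_{K_1}(I_{K_{1,m}}^{K_1}\xi,\tau) \cong \Hom_{K_{1,m}}(\xi, \tau|_{K_{1,m}}).
\]
Here $\xi$ is extended from $B(\CO)$ to $K_m$ (resp.\ $K_{1,m}$) via the Iwahori factorization $K_m = (K_m \cap N^-(\CO)) \cdot T(\CO) \cdot N(\CO)$ by declaring $\xi$ trivial on the lower-unipotent factor. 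The question becomes whether such a character appears in the restriction of $\tau$.

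Next, I would invoke the Kutzko very cuspidal hypothesis on $\tau$ in each of the two cases $\CK = ZK$ and $\CK = J$. For $\CK = ZK$, the representation $\tau$ descends modulo $K_1$ (up to central twist) to an irreducible cuspidal representation of $\GL_2(k) = K/K_1$. Cuspidal representations of $\GL_2(k)$ have vanishing $N(k)$-Jacquet module and contain no character of the finite Borel, which yields the $K_m$ vanishing for $\xi$ of level $0$; the level $1$ case follows by the same cuspidality input applied after reduction mod $K_2$ and comparing on the pro-$p$ radical. The $K_{1,m}$ statement follows by the same argument after conjugation by the Weyl element interchanging $K_m$ and $K_{1,m}$ inside $K_1$. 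For $\CK = J$, I would use Kutzko's explicit realization of $\tau$: the restriction $\tau|_{ZK_1}$ decomposes into Heisenberg-type pieces whose central characters on the pro-$p$ radical of $K_1$ are of the generic (non-parabolic) type, while a level-$\leq 1$ character of $B(\CO)$ extended to $K_m$ or $K_{1,m}$ is of parabolic type, so no intertwining is possible.

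The main obstacle will be the precise bookkeeping in the $\CK = J$ case, where the finite-reductive-quotient interpretation is less transparent than for $\CK = ZK$; one must directly trace through Kutzko's explicit construction for supercuspidals of even conductor, making sure the ``level'' convention used by the paper for characters of $B(\CO)$ matches the depth convention in Kutzko's theory so that the Mackey comparison goes through in both the level $0$ and level $1$ cases simultaneously.
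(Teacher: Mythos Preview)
Your Frobenius-reciprocity reduction is exactly how the paper begins, and for $I_{K_m}^{K_1}\xi$ your plan is right and matches the paper: a nonzero element of $\Hom_{K_m}(\xi,\tau)$ yields an $N(\CO)$-fixed line in $\tau$ (since $N(\CO)\subset K_m$ and every character of $B(\CO)$ is trivial on $N(\CO)$), contradicting very-cuspidality. The paper does this in one sentence, uniformly in $\CK$; your case split $\CK=ZK$ versus $\CK=J$ and the passage to $\GL_2(k)$ are unnecessary. Note also a slip: $K_1$ is the Iwahori subgroup, not the principal congruence subgroup, so $K/K_1\cong\BP^1(k)$ is not a group and $\tau$ does not ``descend modulo $K_1$''.

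The genuine gap is in the $K_{1,m}$ half. There is no element ``interchanging $K_m$ and $K_{1,m}$ inside $K_1$'': the Weyl element $w$ does not normalize $K_1$ (it sends the Iwahori to its opposite), while the Atkin--Lehner element $\begin{pmatrix}0&1\\\varpi&0\end{pmatrix}$ does normalize $K_1$ and conjugates $K_{m+1}$ to $K_{1,m}$, but it lies in $J$ and not in $ZK$, so for $\CK=ZK$ it does not fix $\tau$. If instead you push Frobenius all the way down to $\Hom_{K_{1,m}}(\xi,\tau)$, you are stuck: $K_{1,m}$ contains only $N(\varpi^m\CO)$ and $N^-(\varpi\CO)$, and very-cuspidality says nothing about fixed vectors under those smaller unipotent subgroups (for instance a depth-zero $\tau$ is trivial on the whole principal congruence subgroup). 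The paper handles this half by a different idea: it does not try to reduce to the $K_m$ case, but asserts that every irreducible $K_1$-constituent of $I_{K_{1,m}}^{K_1}\xi$ already carries an $N(\CO)$-fixed vector, so none of them can embed in $\tau|_{K_1}$. That structural statement about the induced representation, not a conjugation trick, is what your proposal is missing.
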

\begin{proof}As $\tau$ has no $N(\CO)$-fixed vector, one deduce by Frobenius reciprocity law that
\[\Hom_{K_1}(I_{K_m}^{K_1}\xi,\tau)=\Hom_{K_m}(\xi,\tau)=0\]
As each irreducible factor of $I_{K_{1,m}}^{K_1}\xi$ has $N(\CO)$-fixed vectors,we deduce  $\Hom_{K_1}(I_{K_{1,m}}^{K_1}\xi,\tau)=0$.
\end{proof}
\begin{thm}\label{sc}Assume $|3|=1$ and $\omega\omega_+\omega_{-}=1$. When $\sigma$ is supercuspidal, 
\[\dim\Hom_G(\sigma\otimes\sigma_+\otimes\sigma_-,\BC)=1.\]
\end{thm}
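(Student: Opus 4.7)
The plan is to transfer the problem from $G$ to the compact-modulo-center subgroup $\wt{\CK}$ via compact induction and Frobenius reciprocity, and then exploit the explicit $\wt{\CK}$-type decompositions of $\sigma_\pm$ and $\pi_\pm$ from Section \ref{K-type Section} together with the upper bound supplied by Proposition \ref{ub}. This is precisely the strategy sketched at the end of the introduction.

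Using the Bushnell--Kutzko construction, we write $\sigma\cong i_\CK^G\tau^\vee$ for some $\CK\in\{ZK,J\}$ and some very cuspidal representation $\tau$ of $\CK$. The projection formula $i_\CK^G(\tau^\vee)\otimes V\cong i_\CK^G(\tau^\vee\otimes V|_{\wt\CK})$ combined with Frobenius reciprocity for compact induction gives
\begin{equation*}
\Hom_G(\sigma\otimes\sigma_+\otimes\sigma_-,\BC)\;\cong\;\Hom_{\wt\CK}\bigl((\sigma_+\otimes\sigma_-)|_{\wt\CK},\;\tau\bigr),
\end{equation*}
and the analogous identity with $\pi_-$ in place of $\sigma_-$. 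Since $|3|=1$, Proposition \ref{ub} yields $\dim\Hom_{\wt\CK}((\sigma_+\otimes\pi_-)|_{\wt\CK},\tau)=3$, which will serve as our upper bound. Because $\wt\CK/\wt{Z^{(c)}}$ is compact, the category of $\wt\CK$-representations with a fixed central character is semisimple, so we may split $\pi_-|_{\wt\CK}=\sigma_-|_{\wt\CK}\oplus X$; by Theorem \ref{K-type} together with Lemmas \ref{K-type I}--\ref{K-type II}, the complement $X$ is the sum of the low-level summands $V_{1,0}^-$ and $V_{1,2}^-$ (resp.\ of $\tilde V_{1,0}^-$ and the other $\tilde V_{1,1}^{\pm,-}$ when $\CK=J$) together with two extra copies of each deep summand $V_k^-$ (resp.\ $\tilde V_k^-$) for $k\geq 2$.

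The crux is to show $\Hom_{\wt\CK}(V^+\otimes V^-,\tau)=0$ whenever at least one of the irreducible summands $V^\pm$ is a low-level component $V_{0,0}$ or $V_{1,i}$ (resp.\ $\tilde V_{1,i}$, $\tilde V_{1,1}^\pm$). Granting this, the three-dimensional space on the $\pi_-$-side reduces to deep contributions only, yielding
\begin{equation*}
3\;=\;3\sum_{k,l\geq 2}\dim\Hom_{\wt\CK}\bigl(V_k^+\otimes V_l^-,\;\tau\bigr),
\end{equation*}
where the factor of $3$ on the right reflects the multiplicity with which each deep $V_l^-$ occurs in $\pi_-|_{\wt\CK}$. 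Hence there is a unique pair $(k_0,l_0)$ with $\dim\Hom_{\wt\CK}(V_{k_0}^+\otimes V_{l_0}^-,\tau)=1$ and all other deep pairs contributing zero. Since $V_{l_0}^-$ appears with multiplicity exactly one in $\sigma_-|_{\wt\CK}$ by Theorem \ref{K-type}, the analogous sum for $\sigma_-$ picks up precisely this one-dimensional contribution, giving $\dim\Hom_{\wt\CK}((\sigma_+\otimes\sigma_-)|_{\wt\CK},\tau)=1$ as desired.

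The main obstacle is the vanishing claim in the previous paragraph: extending Lemma \ref{very cuspidal} from the atomic inductions $I_{K_m^*}^{K_1^*}\xi$ and $I_{K_{1,m}^*}^{K_1^*}\xi$ to arbitrary tensor products $V^+\otimes V^-$ in which at least one factor is a low-level summand. This will require a Mackey-theoretic reduction that uses the explicit $K_1^*$- and $K_{1,m}^*$-block structure from Lemmas \ref{irreII} and \ref{irreIV}, showing that every such tensor product remains a direct sum of representations induced from $K_m^*$- or $K_{1,m}^*$-characters with $m\leq 1$, none of which can admit the very cuspidal $\tau$ as a quotient.
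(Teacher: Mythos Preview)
Your outline is exactly the paper's approach: compact induction, Frobenius reciprocity, the upper bound $3$ from Proposition~\ref{ub}, and the multiplicity-three-versus-one comparison between $\pi_-|_{\wt\CK}$ and $\sigma_-|_{\wt\CK}$ coming from Lemmas~\ref{K-type I}, \ref{K-type II} and Theorem~\ref{K-type}.

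The ``obstacle'' you flag is milder than you make it sound, and the paper disposes of it in one sentence by invoking Lemma~\ref{very cuspidal} and ``the very definition'' of the $V_{k,i}$. The key observation is not that $V^+\otimes V^-$ is itself induced from a small-level subgroup; rather, if $V^+$ is a low-level summand then $V^+$ is, by construction, a constituent of $I_{\wt{Z_*}K_1^*}^{\wt{ZK}}\chi_{a,b}$ for some level-$\leq 1$ character $\chi_{a,b}$ (this is immediate from the definitions preceding Lemma~\ref{irreI}: $\mu_{0,a,b}$ and $\mu_{1,a,b}$ are by definition subrepresentations of $I_{K_1^*}^{K^*}\chi_{a,b}$). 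By the projection formula and Frobenius reciprocity,
\[
\Hom_{\wt{ZK}}\bigl(I_{\wt{Z_*}K_1^*}^{\wt{ZK}}\chi_{a,b}\otimes V^-,\ \tau\bigr)
\;=\;\Hom_{\wt{Z_*}K_1^*}\bigl(\chi_{a,b}\otimes V^-|_{\wt{Z_*}K_1^*},\ \tau\bigr),
\]
and now the left tensor factor is a \emph{character}. By Lemma~\ref{irreII}, every constituent of $V^-|_{K_1^*}$ sits inside some $I_{K_m^*}^{K_1^*}\xi$ or $I_{K_{1,m}^*}^{K_1^*}\xi^w$ with $\xi$ of level $\leq 1$; tensoring by the level-$\leq 1$ character $\chi_{a,b}$ keeps the inducing datum of level $\leq 1$ (while $m$ stays arbitrary). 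Lemma~\ref{very cuspidal} then applies verbatim. So your final sentence is slightly off: you do not need to force $m\leq 1$ in the induction, only to keep the inducing character of level $\leq 1$, which is automatic here. The symmetric argument handles the case where $V^-$ is low-level, and your counting $3=3\sum_{k,l\geq 2}\dim\Hom_{\wt\CK}(V_k^+\otimes V_l^-,\tau)$ then goes through exactly as written. The $\CK=J$ case is identical with $\tilde V$'s in place of $V$'s.
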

\begin{proof}By \cite[Theorem 6,1]{Pra90}, $\sigma=I_{\CK}^G\tau^\vee$ for some very cuspidal representation $\tau^\vee$ of $\CK=J$ or $ZK$. Thus by Frobenius reciprocity and Proposition \ref{ub},
\[\Hom_G(\sigma\otimes\pi_+\otimes\sigma_-,\BC)=\Hom_{\wt{ZK}}(\pi_+\otimes\sigma_-,\tau)=3.\]
By the very definition of $V_{1,i}$ $V_{0,i}$, $V_{k}$ $k\geq 2$ (resp. $\wt{V}_{1,i}$. $\wt{V}_k$ $k\geq2$), we deduce from Lemma \ref{very cuspidal} and Lemma \ref{K-type I}, \ref{K-type II} that for some $k,k^\prime\geq2$, 
$\Hom_{\wt{ZK}}(V_k^+\otimes V_{k^\prime}^{-},\tau)\neq0$ (resp. $\Hom_{\wt{J}}(\wt{V}_k^+\otimes \wt{V}_{k^\prime}^{-},\tau)\neq0$). By Theorem \ref{K-type},  the multiplicity is one and   for all irreducible factor $U\subset \sigma_+|_{\CK}$ and $U^\prime\subset \pi_-|_{\CK}$ such that $(U,U^\prime)\neq (V_k^+,V_{k^\prime}^-)$, $\Hom_{\CK}(U\otimes U^\prime,\tau)=0$. From this, we immediately deduce
\[\dim \Hom_G(\sigma\otimes\sigma_+\otimes\sigma_-,\BC)=\Hom_{\wt{ZK}}(\sigma_+\otimes\sigma_-,\tau)=1.\]
\end{proof}
\subsection{Local  co-period: non-archimedean case}\label{Tri}
 Pertain the notations in Subection \ref{Mul-one}. In this subsection, we   explicitly construct distinguished linear functionals in $\Hom_G(\sigma\otimes\sigma_+\otimes\sigma_-,\BC)$ under the central character assumption $\omega\omega_+\omega_-=1$. Without lose of generality, we shall assume the central characters $\omega_*$, $*=\pm, \emptyset$ are all unitary. Let $\lambda(\sigma)=0$ when $\sigma$ is discrete and $\lambda(\sigma)$ be the real number such that $\theta\delta^{-\lambda(\sigma)}$ is unitary.  Note that $\sigma$ is tempered if and only $\lambda(\sigma)=0$ and if $\sigma$ is a component of a cuspidal automorphic $\GL_2(\BA)$ representation, $|\lambda(\sigma)|<1/6$ by \cite{KS99}. \textit{We shall assume $-1/6<\lambda(\sigma)<1/6$ in the following}.
 
For $*=\emptyset,\pm$, take a non-degenerate $\wt{G}$-invariant pairing $(-,-)_*$ on $\sigma_*$ and consider the  integration of matrix coefficient 
 \[I(W\otimes W_+\otimes W_-, W^\vee\otimes W_+^\vee\otimes W_-^\vee):=\int_{Z^{(c)}N\bs G}( \sigma(g)W, W^\vee) ( \sigma_+(g)W_+, W_+^\vee)_+ (\sigma_-(g)W_-, W_-^\vee)_- dg\]
 for $W_*\in\sigma_*$ and $W_*^\vee\in\sigma_*^\vee$. The main result of this subsection is:
 \begin{thm}\label{Canonical local period}
   The integration converges absolutely. The element \[I\in \Hom_{G}(\sigma\otimes\sigma_+\otimes\sigma_-,\BC)\times \Hom_{G}(\sigma^\vee\otimes\sigma_+^\vee\otimes\sigma_-^\vee,\BC)\] is  a generator when  $|3|=1$ or $\sigma$ is non-supercuspidal.
\end{thm}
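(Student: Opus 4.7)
\medskip

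\noindent\textbf{Proof proposal.}
My plan is to handle convergence and non-vanishing separately, with multiplicity one (from Proposition \ref{n-SC} and Theorem \ref{sc}) providing the bridge from non-vanishing to the generator property.

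For absolute convergence, I would apply the Iwasawa-type formula in Lemma \ref{int} to reduce the integral over $Z^{(c)}N\backslash G$ to an integral over $K$ and over $F^\times$ (parametrizing $\diag\{y,1\}$). Uniformly in $K$, the matrix coefficient of $\sigma$ is bounded by a constant times $\delta^{1/2+|\lambda(\sigma)|}\cdot\Xi$ for a Harish--Chandra type function, and the matrix coefficients of $\sigma_\pm$ satisfy the gauge estimate in \cite[Theorem I.4.1]{KP84}, which gives decay $|y|^{1}$ times a Schwartz factor along the torus direction. Multiplying the three asymptotic bounds, the resulting one-dimensional integrand on $F^\times$ is controlled by $|y|^{\alpha}$ for some $\alpha$ that is positive whenever $|\lambda(\sigma)|<1/6$, which is precisely our standing hypothesis. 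A standard majorization then yields the absolute convergence and, by the same bound, the convergence of the dual integration.

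To show $I$ is a generator, I would adapt the unfolding technique in \cite[Proposition 5.2]{Hsi21} to factor $I$ as the product of two ``local co-periods'' analogous to local Rankin--Selberg zeta integrals. Concretely, realize $\sigma_\pm$ and $\sigma_\pm^\vee$ in the Whittaker models $\CW(\sigma_\pm,\psi)$, $\CW(\sigma_\pm^\vee,\bar\psi)$ via Lemma \ref{Kiri}, with the $\wt{G}$-invariant pairings $(-,-)_\pm$ given by the explicit sum-integral formula of Subsection \ref{InvP} (whose $\wt{G}$-invariance is ensured by Proposition \ref{Inv}). Substituting these explicit pairings into $I$, collapsing one Iwasawa integration against the $F^\times$-integral coming from the pairing, and switching the order of integration, the triple integral decomposes as a product of two local co-period integrals $Z^\pm(W,W_+,W_-;\cdots)$, each of which is an integral over $Z_*N\backslash G$ of a product of three Whittaker functions. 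These factors are precisely of the form studied analogously to the Shimura-type construction of \cite{BG92,BGH96}, and can be shown to satisfy local multiplicity one with an explicit non-vanishing test vector choice; moreover in the unramified case they will match the desired local $L$-factor ratio (as indicated in Theorem \ref{Local-Unr}). Taking test vectors for which each factor is non-zero produces a non-zero value of $I$; combined with $\dim\Hom_G(\sigma\otimes\sigma_+\otimes\sigma_-,\BC)=1$, this is exactly what is needed.

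The main obstacle will be the unfolding step: one must keep careful track of the Kubota cocycle, the choice of preferred section $s$, and the extension of characters from $\wt{Z^{(c)}}\wt{T_3}$ to $\wt{T_*}$ throughout the manipulations, and one has to ensure that the sums over $\wt{Z_*}\backslash \wt{Z}$ (which appear because the exceptional representations may carry several Whittaker functionals depending on the Kazhdan--Patterson parameter $c$) assemble into a single clean integral rather than a degenerate combination. A related subtlety is that the unfolded co-period integrals must be shown to converge in the same range $|\lambda(\sigma)|<1/6$, which I expect to follow from the same gauge estimate once the Whittaker expansion is used. The contragredient statement is symmetric: applying the same unfolding to $(\sigma^\vee,\sigma_+^\vee,\sigma_-^\vee)$ and using that each factored local co-period is non-vanishing on appropriate test data shows that $I$ simultaneously generates both Hom-spaces.
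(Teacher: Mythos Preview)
Your convergence argument and the overall architecture (unfold \`a la \cite[Proposition~5.2]{Hsi21}, produce nonzero test data, invoke multiplicity one) are the same as the paper's. The gap is in the mechanics of the unfolding.

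The Hsieh trick factors $I$ by writing \emph{one} of the three matrix coefficients as an induced-model pairing (a $K$-integral), then collapsing the outer $Z^{(c)}\backslash G$-integral against that $K$-integral via Iwasawa. A Whittaker pairing, being an $F^\times$-integral along $A$, does not serve this purpose, so placing both $\sigma_\pm$ in Whittaker models and hoping for ``a product of three Whittaker functions'' does not directly yield the factorization; one of the three representations must be realized inside a principal series.

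When $\sigma$ is non-supercuspidal one can put $\sigma\subset I_B^G\theta$ in the induced model (this is Proposition~\ref{Tri-II}), obtain the single-product decomposition $I=\zeta_F(1)\Psi\cdot\Psi^\vee$, and finish as you describe. For supercuspidal $\sigma$ (the case $|3|=1$ in the theorem) this route is unavailable, and the paper instead embeds $\sigma_-\hookrightarrow\pi_-=I_{\wt{B_*}}^{\wt{G}}\chi_-^w$ and uses the induced pairing on $\pi_-\times\pi_-^\vee$ (Proposition~\ref{Tri-I}). The cost is that the decomposition becomes a \emph{sum}
\[
I=\zeta_F(1)\sum_{t\in\wt{T_*}\backslash\wt{T'}}\Psi_t\,\Psi_t^\vee,
\]
which does not collapse to a single term; this is exactly the obstacle you flagged, but it is not resolved by the terms ``assembling into a single clean integral''. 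Nonvanishing is obtained instead by a diagonalization: fix $t$ with $\Psi_t|_{\sigma\otimes\sigma_+\otimes\sigma_-}\neq 0$, then choose dual data with $f^\vee_{t,-}\in I_{\wt{B_*}}^{\wt{G}}\chi_-^{w,-}$ supported on the single coset $\wt{B_*}tU$ so that $\Psi^\vee_{t'}$ vanishes for all $t'\neq t$. This kills the cross terms in the sum and forces $I\neq 0$.
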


Embed $\sigma$ into $I_{ZN}^G\omega\boxtimes\bar{\psi}$
and consider the pairing 
\[(-,-):\ \sigma\times\sigma^\vee\lra\BC,\quad (W,W^\vee)\mapsto\int_{F^\times}WW^\vee\left[\begin{pmatrix}y & 0\\ 0 & 1\end{pmatrix}\right]d^\times y.\] 
Extend $\omega_{\pm}$ to $\wt{Z_*}$. Embed $\sigma_+$ (resp. $\sigma_+^\vee$) into $I_{\wt{Z_*}N}^{\wt{G}}\omega_{+}\boxtimes\psi$ (resp. $I_{\wt{Z_*}N}^{\wt{G}}\omega_{+}^{-1}\boxtimes\bar{\psi}$).
Consider the non-degenerate pairing \[(-,-)_+:\ \sigma_+\times\sigma_+^\vee\lra\BC, \quad (W_+,W_+^\vee)\mapsto\int_{F^\times}\sum_{z\in\wt{Z_*}\bs\wt{Z}}WW^\vee\left[z\begin{pmatrix}y &0 \\ 0 & 1\end{pmatrix}\right]d^\times y.\] on $\sigma_{+}\times\sigma_{+}^\vee$. 
Embed $\sigma_-$ into $I_{\wt{B}_*}^{\wt{G}}\chi_-^w$ and consider the pairing $(-,-)_-$ induced from the pairing 
\[I_{\wt{B_*}}^{\wt{G}}\chi_-^w\times I_{\wt{B_*}}^{\wt{G}}\chi_-^{w,-}\lra\BC,\quad (f,f^\vee)\mapsto\int_{K}\sum_{t\in \wt{T_*}\bs \wt{T}}\delta^{-1}(t)f(tk)f^\vee(tk)dk.\]
As explained in Subsubsection \ref{InvP}, all these pairings are $\wt{G}$-invariant.
Let $T^\prime=\{(a,b)\in T| ab\in\Gamma\}$. Then $\wt{T^\prime}\subset\wt{T}$ is the subgroup fixing $\wt{Z_*}$ for the conjugation action.  
\begin{prop}\label{Tri-I}
For $*=\emptyset$ or $\vee$, the trilinear forms 
\[\Psi_t^*:\ \sigma^*\otimes\sigma_+^*\times (I_{\wt{B_*}}^{\wt{G}}\chi_-^{w})^*\to\BC,\quad (W^*,W_+^*,f_-^*)\mapsto\delta^{-1/2}(t)\int_{ZN\bs G}\sum_{z\in \wt{Z_*}\bs\wt{Z}}W^*(zg)W_+^*(zg)f_-^*(tzg)dg\]
forms a basis of $\Hom_{G}(\sigma^*\otimes\sigma_+^*\otimes (I_{\wt{B_*}}^{\wt{G}}\chi^{w}_-)^*,\BC)$ when $t$ running over a complete set  of representatives $\wt{T_*}\bs \wt{T'}$. Moreover, if one defines $I$ using $(-,-)_*$ for $*=\pm,\emptyset$ as above, then 
\[I(W\otimes W_+\otimes f_-, W^\vee\otimes W_+^\vee\otimes f_-^\vee)=\zeta_F(1)\sum_{t\in \wt{T_*}\bs \wt{T^\prime}}\Psi_t(W,W_+,f_-)\Psi_t^\vee(W^\vee,W_+^\vee,f_-^\vee)\]
for any $W^* \in \sigma^*$, $W_+^* \in \sigma_+^*$
and $f_-^* \in (I_{\wt{B_*}}^{\wt{G}}\chi^{w}_-)^*$ with $* = 
\emptyset$ and $\vee$.
\end{prop}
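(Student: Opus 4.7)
The plan is to treat the two assertions in turn. For the basis claim, I would first apply Frobenius reciprocity in the induced model of the third factor:
\[\Hom_G\bigl(\sigma^*\otimes\sigma_+^*\otimes (I_{\wt{B_*}}^{\wt{G}}\chi_-^w)^*, \BC\bigr) \cong \Hom_{\wt{B_*}}\bigl((\sigma^*\otimes\sigma_+^*)_N, \delta^{1/2}(\chi_-^w)^{*,-1}\bigr).\]
I would then invoke the Jacquet module short exact sequence of Proposition \ref{Jac of tensor}, together with Proposition \ref{Jac}, arguing as in the proof of Proposition \ref{ub} that the quotient $\sigma_N^*\otimes\sigma_{+,N}^*$ contributes nothing under the standing hypothesis $-1/6<\lambda(\sigma)<1/6$ (the $\wt{T}$-character exponents on either side being incompatible). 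The remaining contribution from the compactly induced piece $i_{\wt{Z}N}^{\wt{B}}(\sigma_\psi^*\otimes\sigma_{+,\bar\psi}^*)$ reduces by a second Frobenius reciprocity to $\Hom_{\wt{Z}}\bigl(\sigma_\psi^*\otimes\sigma_{+,\bar\psi}^*, (\chi_-^w)^{*,-1}|_{\wt{Z}}\bigr)$, which by Stone--von Neumann has dimension $[\wt{T}:\wt{T_*}]$, matching $|\wt{T_*}\bs\wt{T'}|$.

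Next, for each representative $t\in\wt{T'}$, I would verify directly that $\Psi_t^*$ is well-defined: absolute convergence follows from the gauge estimate \cite[Theorem I.4.1]{KP84} for Whittaker functions on exceptional representations combined with the bound $|\lambda(\sigma)|<1/6$, and the $G$-invariance is a standard unfolding check using the transformation laws of the three models. The normalization $\delta^{-1/2}(t)$ together with the $\chi_-^w$-covariance of $f_-^*$ ensures that $\Psi_t^*$ depends only on the coset $\wt{T_*}t\in\wt{T_*}\bs\wt{T'}$. Linear independence of the resulting family would be checked by selecting $f_-^*$ whose support modulo $\wt{B_*}$ localizes to a prescribed Bruhat cell, so that $\Psi_t^*$ becomes non-zero only for the chosen coset; a dimension count then shows that the $\Psi_t^*$ constitute a basis.

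For the decomposition formula, the plan is an explicit unfolding. Substituting the integral representations of $(-,-)$, $(-,-)_+$, and $(-,-)_-$ into the definition of $I$ and applying Lemma \ref{int} to parametrize $Z^{(c)}N\bs G$ via Iwasawa, one obtains a multiple integral carrying two summations: one over $\wt{Z_*}\bs\wt{Z}$ coming from $(-,-)_+$, and one over $\wt{T_*}\bs\wt{T}$ coming from $(-,-)_-$. The key manipulation is to split the latter as $(\wt{T_*}\bs\wt{T'})\times(\wt{T'}\bs\wt{T})$ and to absorb the $\wt{T'}\bs\wt{T}$ portion via a change of variables into the outer integral on $Z^{(c)}N\bs G$ together with the $\wt{Z_*}\bs\wt{Z}$ sum. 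After this collapse, each residual coset $t\in\wt{T_*}\bs\wt{T'}$ produces exactly a factor $\Psi_t(W,W_+,f_-)\cdot\Psi_t^\vee(W^\vee,W_+^\vee,f_-^\vee)$, with the overall constant $\zeta_F(1)$ emerging from the normalization $d^\times y=\zeta_F(1)\,dy/|y|$.

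The main technical obstacle will be the measure bookkeeping in this last step: verifying that the splitting of the $\wt{T_*}\bs\wt{T}$ sum is compatible with the Haar measures on $Z^{(c)}\bs Z$ and on $\wt{Z_*}\bs\wt{Z}$, so that precisely the claimed constant $\zeta_F(1)$ appears and no spurious factor of $[\wt{Z}:\wt{Z_*}]$ or $[\wt{T}:\wt{T'}]$ is introduced. Parallel to this, one must rigorously justify absolute convergence at every intermediate rearrangement of the multi-sum-integral, again using the gauge estimates on exceptional Whittaker functions together with $|\lambda(\sigma)|<1/6$.
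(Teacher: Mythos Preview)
Your overall strategy matches the paper's: establish the dimension of the Hom-space via Frobenius reciprocity and Proposition~\ref{ub}, exhibit explicit test vectors to prove linear independence of the $\Psi_t^*$, and obtain the decomposition formula by the unfolding argument of \cite[Proposition~5.2]{Hsi21}. Two points deserve correction.

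First, your Frobenius reduction and dimension count are misstated. The reciprocity should land over $\wt{B}$, not $\wt{B_*}$: writing $I_{\wt{B_*}}^{\wt{G}}\chi_-^w=I_{\wt{B}}^{\wt{G}}\bigl(I_{\wt{T_*}}^{\wt{T}}\chi_-^w\bigr)$, one obtains $\Hom_{\wt{T}}\bigl((\sigma\otimes\sigma_+)_N,\,\delta^{1/2}I_{\wt{T_*}}^{\wt{T}}\chi_-^{w,-1}\bigr)$, and after the second reduction the target is the restriction to $\wt{Z}$ of the induced $\wt{T}$-representation, not of the character $\chi_-^{w,-1}$ itself (note $\wt{Z}\not\subset\wt{T_*}$). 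Consequently the dimension is $3|3|^{-1/2}=[\wt{T'}:\wt{T_*}]$, not $[\wt{T}:\wt{T_*}]=3d|3d|^{-1/2}$; these differ whenever $d=3$. Your sentence is internally inconsistent, since you assert both numbers are equal.

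Second, your linear-independence argument is underspecified: localizing only $f_-^*$ is not enough, because the integral over $ZN\bs G$ still ranges over all of $K$. The paper chooses test vectors in \emph{all three} factors simultaneously---$W_{t'}$ and $W_{+,t'}$ with Whittaker restriction supported near the identity in $A$ (the latter possible because $i_{\wt{Z_*}N}^{\wt{B}}\omega_+\boxtimes\psi\subset\sigma_+$), together with $f_{-,t'}$ supported on $\wt{B_*}t'U$ for small $U\subset K$---so that $\Psi_t(W_{t'},W_{+,t'},f_{-,t'})\neq 0$ iff $t=t'$. You should do the same.
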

\begin{proof}By the gauge estimation in \cite[Theorem I.4.1]{KP84}, the integration converges absolutely and the trilinear form $\Psi_t$(resp. $\Psi_t^\vee$) $t\in \wt{T_*}\bs \wt{T}$ is well-defined. By construction, all $\Psi_t$ belong to $\Hom_G(\sigma\otimes\sigma_+\otimes I_{\wt{B_*}}^{\wt{G}}\chi_-^w,\BC)$. Applying this invariance property to $\wt{Z_*}$, we find $\Psi_t=0$ unless $t\in \wt{T^\prime}$. Take $U\subset K$ small enough such that the covering map $\wt{G}\to G$ admits a splitting $\kappa$ on $U$. Choose $W_{t^\prime}\in \sigma$ fixed by $U$ and when restricted to $\diag\{F^\times,1\}$, $W_{t^\prime}$ is the characteristic function on $\diag\{V,1\}$ where $V\subset \CO^\times$ is a small open subgroup. Since $i_{\wt{Z_*}N}^{\wt{B}}\omega_+\boxtimes\psi\subset \sigma_{+,\psi}$, we can choose $W_{+,t^\prime}\in\sigma_+$ fixed by $\kappa(U)$ and when restricted to $\wt{T}$ is supported on $s(\diag\{V,1\})\wt{Z_*}$. Choose $f_{-,t^\prime}\in I_{\wt{B_*}}^{\wt{G}}\chi_-^w$ such that $f_{-,t^\prime}$ is supported on $\wt{B_*}t^\prime U$, fixed by $U$ and $f_{-,t^\prime}(t^\prime)=1$. All these functions exists when $U$ is small. 
Since $\wt{G}=\sqcup_{t\in \wt{T_*}\bs \wt{T}} \wt{B_*} t \wt{K}$, we find $\Psi_t(W_{t^\prime},W_{+,t^\prime},f_{-,t^\prime})\neq0$ if and only $t=t^\prime$. By Proposition \ref{ub}, we have \[[\wt{T^\prime}:\wt{T_*}]=\sqrt{[F^\times:F^{\times,3}]}=\dim \Hom_G(\sigma\otimes\sigma_+\otimes I_{\wt{B_*}}^{\wt{G}}\chi_-^w,\BC).\]
Thus $\{\Psi_{t}, t\in \wt{T_*}\bs \wt{T^\prime}\}$ forms a 
basis of $\wt{G}$-invariant trilinear forms. Similar results holds for $\Psi_t^*$.

 To obtain the decomposition formula, one can  adapt the argument in  \cite[Proposition 5.2]{Hsi21}.
\end{proof}
Now we can prove Theorem \ref{Canonical local period}.
\begin{proof}[Proof of Theorem \ref{Canonical local period}] By the multiplicity one property of $\Hom_G(\sigma\otimes\sigma_+\otimes\sigma_-,\BC)$, we only need to show $I$ is non-zero. Choose $t$ such that $\Psi_t$ defines a non-zero element in $\Hom_G(\sigma\otimes\sigma_+\otimes\sigma_-,\BC)$ and choose $W\in\sigma$, $W_+\in\sigma_+$,$f_-\in\sigma_-$ such that $\Psi_t(W,W_+,f_-)\neq0$. As in the proof of Proposition \ref{Tri-I}, we can choose $W^\vee_t\in\sigma^\vee$, $W_{t,+}^\vee\in\sigma_+^\vee$, $f_{t,-}^\vee\in I_{\wt{B_*}}^{\wt{G}}\chi_-^{w,-}$ such that $\Psi_{t^\prime}^\vee(W^\vee_t,W_{t,+}^\vee,f_{t,-}^\vee)\neq0$ if and only if $t=t^\prime$. By the decomposition formula, 
$I(W\otimes W_+\otimes f_-,W^\vee_t,W_{t,-}^\vee,f_{t,-}^\vee)\neq0$. Together with the description as integration of matrix coefficients, we find $I\in  \Hom_{G}(\sigma\otimes\sigma_+\otimes\sigma_-,\BC)\times \Hom_{G}(\sigma^\vee\otimes\sigma_+^\vee\otimes\sigma_-^\vee,\BC)$ is non-zero.
\end{proof}
When $\sigma$ itself is non-supercuspidal, we have another decomposation of the integration of matrix coefficients $I$, which is more suitable for unramified calculations. Assume $\sigma\subset I_B^G\theta$ is non-supercuspidal.  Consider the $G$-invariant pairing $(-,-)$ on $\sigma\times\sigma^\vee$ induced from the pairing \[I_B^G\theta\times I_B^G\theta^{-1}\lra\BC,\quad (f,f^\vee)\mapsto\int_{K}f(k)f^\vee(k)dk.\]
Embed $\sigma_\pm$ (resp. $\sigma_\pm^\vee$) into $I_{\wt{Z_*}N}^{\wt{G}}\omega_{\pm}\boxtimes\psi^{\pm}$ (resp. $I_{\wt{Z_*}N}^{\wt{G}}\omega_{\pm}^{-1}\boxtimes\psi^{-\pm}$).
Consider the pairing $(-,-)_\pm$ on $\sigma_{\pm}\times\sigma_{\pm}^\vee$ given in term of Whittaker functions.
\begin{prop}\label{Tri-II}
For $*=\emptyset$ or $\vee$, the trilinear form 
\[\Psi^*:\ \sigma_+^*\times\sigma_-^*\times\sigma^*\lra\BC,\quad (W_1^*,W_2^*,f^*)\mapsto\int_{ZN\bs G}\sum_{z\in \wt{Z_*}\bs\wt{Z}}W_1^*(zg)W_2^*(zg)f_3^*(zg)dg\]
is a generator of $\Hom_{G}(\sigma_+^*\otimes\sigma_-^*\otimes\sigma^*,\BC)$. Moreover, if one defines $I$ using $(-,-)_*$ for $*=\pm,\emptyset$, then 
\[I(W_+\otimes W_-\otimes f, W_+^\vee\otimes W_-^\vee\otimes f^\vee)=\zeta_F(1)\Psi(W_+,W_-,f)\Psi^\vee(W_+^\vee,W_-^\vee,f^\vee)\]
for any $W_\pm^* \in \sigma_\pm^*$, $f^* \in \sigma^*$ with $* = \emptyset$ and $\vee$.
\end{prop}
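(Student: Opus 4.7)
The plan is to mirror the proof of Proposition \ref{Tri-I}, swapping the roles played by the induced model (now for $\sigma$) and the Whittaker model (now for $\sigma_\pm$). Since $\sigma$ is non-supercuspidal and generic, Proposition \ref{PS} together with the numerology listed just after it gives $\dim\Hom_G(\sigma_+\otimes\sigma_-\otimes\sigma,\BC)=1$, so I do not need to sum over a family of trilinear forms indexed by $\wt{T_*}\bs\wt{T^\prime}$ as in Proposition \ref{Tri-I}; a single nonzero $G$-invariant form will do.

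First I would verify absolute convergence of the integral defining $\Psi^*$. Using the Iwasawa-type integration formula of Lemma \ref{int}, the integral over $ZN\bs G$ reduces to one over $K\times F^\times$ with measure $dk\,d^\times y/|y|$. The Whittaker functions $W_1^*,W_2^*$ restricted to $\diag\{y,1\}k$ decay rapidly for $|y|$ large (standard Whittaker estimates) and admit the gauge estimate of \cite[Theorem I.4.1]{KP84} for $|y|$ small, while the induced section $f^*$ evaluated on $\diag\{y,1\}k$ grows at most like $|y|^{1/2+\lambda(\sigma)}$. The hypothesis $-1/6<\lambda(\sigma)<1/6$, together with the fact that $\wt{Z_*}\bs\wt{Z}$ is finite, makes the product integrable. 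A direct check of the $\wt{Z_*}N$-equivariance of the Whittaker functions and the $\wt{B_*}$-equivariance of $f^*$ (the characters $\psi$ and $\bar\psi$ carried by $\sigma_+$ and $\sigma_-$ cancelling on $N$, and the central characters balancing by $\omega\omega_+\omega_-=1$) then shows $\Psi^*\in\Hom_G(\sigma_+^*\otimes\sigma_-^*\otimes\sigma^*,\BC)$.

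To establish nonvanishing, I would choose a small open subgroup $U\subset K$ on which the covering splits via $\kappa$, and pick $W_+\in\sigma_+$, $W_-\in\sigma_-$, $f\in\sigma$ that are all $\kappa(U)$-fixed and localize the integrand near the identity coset: using $\sigma_{\pm,\psi}\neq 0$ so that $i_{\wt{Z_*}N}^{\wt{B}}\omega_\pm\boxtimes\psi^\pm\subset\sigma_\pm$ (Lemma \ref{Kiri}), one can arrange $W_\pm$ to be supported on $\wt{Z_*}\cdot s(\diag\{V,1\})\cdot \kappa(U)$ for a small $V\subset\CO^\times$, and $f$ to be nonzero at $1\in K$. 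The resulting integral is manifestly positive, so $\Psi(W_+,W_-,f)\neq 0$. Combined with multiplicity one, this shows $\Psi^*$ generates the relevant Hom-space.

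Finally, the factorization $I=\zeta_F(1)\,\Psi\cdot\Psi^\vee$ is proved by adapting the unfolding of \cite[Proposition 5.2]{Hsi21} exactly as in Proposition \ref{Tri-I}: one substitutes the explicit formula $(\sigma(g)f,f^\vee)=\int_K f(gk)f^\vee(k)\,dk$ arising from our pairing on $\sigma\times\sigma^\vee$, applies Fubini (justified by the absolute convergence already established), and combines the $K$-integral with the outer $ZN\bs G$-integration; the inner integral splits as $\Psi(W_+,W_-,f)\cdot\Psi^\vee(W_+^\vee,W_-^\vee,f^\vee)$ with the factor $\zeta_F(1)$ emerging from the Iwasawa constant in Lemma \ref{int}. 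The main obstacle, as in the preceding proposition, will be the careful bookkeeping of measure normalizations and of the interaction between the Whittaker pairing $(-,-)_\pm$ (which itself involves a finite sum over $\wt{Z_*}\bs\wt{Z}$) and the $G$-invariant pairing on the induced model of $\sigma$; getting the constant to come out to exactly $\zeta_F(1)$ is the one calculation that cannot be skipped.
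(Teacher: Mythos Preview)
Your outline matches the paper's approach: convergence via the gauge estimates of \cite[Theorem I.4.1]{KP84}, nonvanishing by localized test vectors, and unfolding the induced-model pairing on $\sigma$ as in \cite[Proposition 5.2]{Hsi21}. But you have mislabeled one genuine step as mere ``bookkeeping.'' After the Hsieh unfolding one obtains
\[I=\zeta_F(1)\sum_{z\in \wt{Z_*}\bs\wt{Z}}\Psi\bigl(\sigma_+(z)W_+,W_-,f\bigr)\,\Psi^\vee\bigl(\sigma_+^\vee(z)W_+^\vee,W_-^\vee,f^\vee\bigr),\]
not a single product: the finite sum over $\wt{Z_*}\bs\wt{Z}$ hidden inside the Whittaker pairing $(-,-)_+$ does not simply get absorbed into the measure constant. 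To collapse the sum, apply the $G$-invariance of each summand at $\zeta\in\wt{Z_*}$. Since $\sigma_-(\zeta)$ and $\sigma(\zeta)$ act by scalars whose product with $\omega_+(\zeta)$ is $1$, invariance forces
\[\epsilon\bigl([z,\zeta]\bigr)\,\Psi\bigl(\sigma_+(z)W_+,W_-,f\bigr)=\Psi\bigl(\sigma_+(z)W_+,W_-,f\bigr)\]
with $[z,\zeta]\in\mu_3$ the commutator in $\wt{Z}$. Because $\wt{Z_*}$ is a \emph{maximal} abelian subgroup of $\wt{Z}$, any representative $z\notin\wt{Z_*}$ fails to commute with some $\zeta$, so that term vanishes. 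This central-character argument---not the constant $\zeta_F(1)$---is what distinguishes the situation from Proposition~\ref{Tri-I}, where the analogous forms $\Psi_t$ all survive and give a basis. (Minor aside: for the nonvanishing step you want $f$ supported on $BK'$, not just nonzero at $1$, so that the integral reduces to a visibly nonzero finite expression; ``manifestly positive'' is not quite the right justification for complex-valued Whittaker data.)
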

\begin{proof} By construction,  $\Psi\in \Hom_G(\sigma_+\otimes\sigma_-\otimes\sigma,\BC)$. Since $i_{\wt{Z_*}N}^{\wt{B}}\omega\boxtimes\psi\subset \CK(\sigma_+,\psi)$, we can choose $W_+\in \sigma_+$ such that $W_1|_{\wt{T}}$ is supported on $s(\diag\{U,1\})\wt{Z_*}$ and constant on $s(\diag\{U,1\})$ for $U\subset\CO^\times$ small enough. Similarly, we can choose $W_-\in\sigma_-$ satisfying the same condition. Assume $\theta$ is trivial on $\diag\{U,1\}$. Choose $K^\prime\subset K$ small enough  such that the covering $p: \wt{G}\to G$ admits a splitting $\kappa$ on $K^\prime$ and $\kappa(K^\prime)$ fixes $W_1$,$W_2$. Let $f\in\sigma$ be the function supported on $BK^\prime$ and $f(1)=1$. For such choices, $\Psi(W_+,W_-,f)$ is clearly non-zero and consequently, $\Psi$ is a generator of $\Hom_G(\sigma_+\otimes\sigma_-\otimes\sigma,\BC)$. Similarly, we can show $\Psi^\vee$ is a generator of $\Hom_G(\sigma_+\otimes\sigma_-\otimes\sigma,\BC)$.

Adapting the argument in \cite[Proposition 5.2]{Hsi21}, we have 
\[I(W_+\otimes W_-\otimes f, W_+^\vee\otimes W_-^\vee\otimes f^\vee)=\zeta_F(1)\sum_{z\in \wt{Z_*}\bs\wt{Z}}\Psi(W_+(z\cdot),W_-,f)\Psi^\vee(W_+^\vee(z\cdot),W_-^\vee,f^\vee).\]
Applying the $G$-invariance of $\Psi(W_+(z\cdot),W_-,f)$ to $\wt{Z_*}\subset \wt{G}$, we find $\Psi(W_+(z\cdot),W_-,f)=0$ unless $z\in \wt{Z_*}$. We are done.
\end{proof}
We end this subsection by the unramified calculation. From now on, assume  $|3|=1$.  Assume  $\sigma=I_B^G\theta$ and $\sigma_\pm=I_{\wt{B_*}}^{\wt{G}}\chi_{\pm}$ is unramified. Take $f_*^\prime\neq0\in \sigma_*^{\prime,K}$  spherical  for $\prime=\emptyset,\vee$ and $*=\pm,\emptyset$ and assume the $\wt{G}$-invariant pairing on $\sigma_*\times\sigma_*^\vee$ satisfies that $(f_*,f_*^\vee)=1$.
\begin{thm}\label{unramified calculation}Notations as above. Then
\begin{align*}
    &I(f_+\otimes f_-\otimes f, f_+^\vee\otimes f_-^\vee\otimes f^\vee)\\
    &=\frac{1}{L(1,\sigma,\ad)}\frac{1}{(1-|\varpi|^{1/2}\chi_{+,1}\chi_{-,2}(\varpi^3)\theta_1^2\theta_2(\varpi))(1-|\varpi|^{1/2}\chi_{+,1}\chi_{-,2}
\theta_1(\varpi^3))}\\
&\times\frac{1}{(1-|\varpi|^{1/2}\chi_{+,1}\chi_{-,2}(\varpi^3)\theta_1\theta_2^2(\varpi))(1-|\varpi|^{1/2}\chi_{+,1}\chi_{-,2}\theta_2(\varpi^3))}
\end{align*}
In particular when $\omega_\pm=1$ (thus $\chi_{+,1}\chi_{-,2}(\varpi^3)^2=1$), 
\[I( f_+\otimes f_-\otimes f, f_+^\vee\otimes f_-^\vee\otimes f^\vee)=\begin{cases}\frac{L(1/2,\sigma,\Sym^3)}{L(1,\sigma,\ad)}, &\  \text{if }\chi_{+,1}\chi_{-,2}(\varpi^3)=1;\\
\frac{L(1,\sigma,\Sym^3)}{L(1/2,\sigma,\Sym^3)L(1,\sigma,\ad)}, &\ \text{if }\chi_{+,1}\chi_{-,2}(\varpi^3)=-1.\end{cases}\]
\end{thm}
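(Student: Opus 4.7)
The starting point is the decomposition formula of Proposition \ref{Tri-II}, which reduces the calculation of the integral of matrix coefficients to evaluating the trilinear Whittaker functional
\[
\Psi(f_+,f_-,f)=\int_{ZN\bs G}\sum_{z\in \wt{Z_*}\bs\wt{Z}}W_{f_+}(zg)\,W_{f_-}(zg)\,f(zg)\,dg
\]
and its analogue $\Psi^\vee$ on the dual side. Under the central character identity $\omega\omega_+\omega_-=1$, the integrand on the fiber over $z$ is invariant, so the inner sum collapses to an explicit factor $[\wt{Z}:\wt{Z_*}]$. Thus $I=\zeta_F(1)\Psi\Psi^\vee$ reduces to squaring a single Whittaker integral, up to a tracked constant.

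Next, I would apply the Iwasawa decomposition via Lemma \ref{int}. Since $f_+,f_-,f$ and their duals are all spherical and normalized, integration over $K$ is trivial, leaving
\[
\Psi(f_+,f_-,f)=[\wt{Z}:\wt{Z_*}]\int_{F^\times}W^\circ_{f_+}\!\begin{pmatrix}y&0\\0&1\end{pmatrix}W^\circ_{f_-}\!\begin{pmatrix}y&0\\0&1\end{pmatrix}f\!\begin{pmatrix}y&0\\0&1\end{pmatrix}\frac{d^\times y}{|y|}.
\]
By Kirillov-theoretic positivity, the integrand is supported on $y=\varpi^a$ with $a\geq 0$. The Kazhdan--Patterson formula (\cite[Theorem I.4.2]{KP84}) implies that the spherical Whittaker function of the exceptional representation $\sigma_\pm$ is supported on $\diag\{\varpi^{3k},1\}$ with $k\geq 0$, with explicit value a power of $|\varpi|$ times $\chi_{\pm,1}(\varpi^{3k})$ (in the normalized unramified case the Gauss-sum contributions trivialize). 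Combined with the classical Casselman--Shalika formula
\[
f\!\begin{pmatrix}\varpi^a&0\\0&1\end{pmatrix}=|\varpi|^{a/2}\,\frac{\theta_1(\varpi)^{a+1}-\theta_2(\varpi)^{a+1}}{\theta_1(\varpi)-\theta_2(\varpi)},
\]
the integral becomes a single sum over $k\geq 0$ of a linear combination of two geometric series indexed by $\theta_1$ versus $\theta_2$.

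I would then carry out the two geometric summations explicitly. Each of them yields a factor of the form $(1-|\varpi|^{1/2}\chi_{+,1}\chi_{-,2}(\varpi^3)\theta_i^{\,?}\theta_j^{\,?}(\varpi))^{-1}$, and after combining the two series (and dividing by $\theta_1(\varpi)-\theta_2(\varpi)$) the four factors in the denominator of the stated formula appear. Multiplying $\Psi$ with the parallel $\Psi^\vee$ and absorbing $\zeta_F(1)$ into the adjoint $L$-factor $L(1,\sigma,\ad)=\zeta_F(1)^{-1}L(1,\theta_1\theta_2^{-1})L(1,\theta_2\theta_1^{-1})$ produces the claimed expression. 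For the symmetric cube specialization, one uses $\omega_\pm=1\Rightarrow\chi_{+,1}\chi_{-,2}(\varpi^3)^2=1$ and compares the resulting Euler factors with the definition $L(s,\sigma,\Sym^3)=\prod_{i+j=3}(1-\theta_1^i\theta_2^j(\varpi)|\varpi|^s)^{-1}$, splitting into the two cases according to the sign of $\chi_{+,1}\chi_{-,2}(\varpi^3)$.

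The routine part is the geometric summation and the matching with $L$-factors. The main obstacle is careful bookkeeping: one must track (i) the splitting of the covering on $K$ and on the torus so that the \emph{normalized} unramified exceptional Whittaker values from \cite[Theorem I.4.2]{KP84} apply without extraneous Gauss-sum factors; (ii) the precise index $[\wt{Z}:\wt{Z_*}]$ and the $\zeta_F(1)$ in Proposition \ref{Tri-II}, since these are absorbed into the measure normalization referenced by the phrase ``up to a normalization of the measure $d g_v$'' in the statement; and (iii) the compatibility of the Haar measure on $K$ (total mass one) with the choice of $dg$ in Lemma \ref{int} so that the Iwasawa step introduces no spurious volume factors. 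Once these normalizations are pinned down consistently, the formula follows from the geometric series computation above.
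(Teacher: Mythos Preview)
Your overall strategy---reduce via Proposition \ref{Tri-II} to the Rankin--Selberg-type integral $\Psi$, apply the Iwasawa decomposition, and insert the spherical Whittaker values from \cite[Theorem I.4.2]{KP84}---is exactly the paper's. But two of your explicit inputs are wrong, and with them the computation does not close.

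First, in Proposition \ref{Tri-II} the vector $f\in\sigma$ sits in the \emph{induced model} $I_B^G\theta$ (with the pairing $(f,f^\vee)=\int_Kff^\vee\,dk$), not in the Whittaker model. Hence
\[
f\!\begin{pmatrix}\varpi^a&0\\0&1\end{pmatrix}=|\varpi|^{a/2}\theta_1(\varpi)^a,
\]
not the Casselman--Shalika Schur polynomial you wrote. Only $\theta_1$ enters $\Psi$, which then yields exactly \emph{two} of the four denominator factors; the remaining two come from $\Psi^\vee$ (involving $\theta_1^{-1}$) after applying $\omega\omega_+\omega_-=1$ and the exceptional condition on $\chi_\pm$.

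Second, the spherical exceptional Whittaker function is \emph{not} supported only on $\diag\{\varpi^{3k},1\}$, and the Gauss sums do \emph{not} trivialize. By \cite[Corollary I.3.4]{KP84} the function $C_\pm$ used in \cite[Theorem I.4.2]{KP84} has support $\wt{T_*}\sqcup\wt{T_*}\eta_{-1,1}$; the second coset contributes nonzero values of $W_\pm$ at non-cubic torus points, with coefficients given by the Gauss sums $g^{(\pm 1),\psi}$. In the paper's calculation (say for $d=1$) this produces a second geometric series at $a\equiv 1\bmod 3$, which after summation gives the numerator $1+|\varpi|^{-1/2}\chi_{+,1}\chi_{-,1}(\varpi^3)\theta_1^2\theta_2(\varpi)$. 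It is precisely this numerator, rewritten using the central-character and exceptional relations, that splits off the second linear factor in the denominator. If you keep only the $a\equiv 0\bmod 3$ term, $\Psi$ becomes a single geometric series and you obtain only one of the two factors. (Relatedly, the sum over $z\in\wt{Z_*}\bs\wt{Z}$ does not collapse to the index: for $d=3$ it has three distinct terms which land in different $\wt{T_*}$-cosets, and the paper treats $d=1$ and $d=3$ separately.)
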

We interlude with the inner product of spherical vectors. Assume  $\psi$ has conductor zero. Take
\[C_{\pm,\psi^{-\pm}}\in S(\chi_\pm,\psi^{\pm}):=\{C:\ \wt{T}\to\BC| C(ht)=(\delta^{1/2}\chi_\pm(h))^{-1}C(t)\quad\forall h\in \wt{T_*},\ t\in \wt{T}\}\] be the unique function satisfying  the formula \ref{cons}, with support  in $\wt{T_*}\sqcup \wt{T_*}\eta_{-1,1}$ and   $C_{\pm,\psi^{-\pm}}(1)=1$. Define $C_{\pm,\psi^{-\pm}}^\vee\in S(\chi_{\pm}^{-1,w},\psi^{-\pm})$ similarly.
Set $W_{\pm}=(C_{\pm,\psi^{\pm}},f_\pm)$ for the spherical function $f_\pm\in V(\chi_\pm)^{K^*}$ with $f(1)=1$. Set $W_{\pm}^\vee=(C_{\pm,\psi^{-\pm}},f_\pm^\vee)$
with  $f_{\pm}^\vee\in V(\chi_{\pm}^{w,-1})^{K^*}$ and $f_\pm^\vee(1)=1$. Then  by \cite[Theorem I.4.2]{KP84}, for relevant $C$ and $W$ as above   \[W(\eta_{a,b})=\begin{cases}\delta(\eta_{a,b})C(\eta_{a,b}^{w,-1}), & 
\text{if }a\geq b;\\
0, &\ \text{otherwise.}\end{cases}\]
Normalized this way, we have \[(W_{\pm},W_\pm^\vee)=\int_{F^\times}\int_{\wt{Z_*}\bs\wt{Z}}W_\pm W_\pm^\vee\left[\begin{pmatrix}y & 0\\ 0 & 1\end{pmatrix}z\right]d^\times y=\zeta_F(1).\]
Take  $f^*\in\sigma^{K,*}$ such that $f^*(1)=1$ for $*=\emptyset, \vee$.  Note that $(f,f^\vee):=\int_Kf(k)f^\vee(k)dk=1$. Now we can prove Theorem \ref{unramified calculation}.
\begin{proof}
By definition, \cite[Theorem I.4.2]{KP84} and the choice of $C$ 
\begin{align*}
    &\Psi(W_+,W_-,f)=\int_{F^\times}\int_{K}\sum_{z\in \wt{Z_*}\bs \wt{Z}}W_+W_-f\left[z\begin{pmatrix}y & 0\\ 0 & 1\end{pmatrix}k\right]\frac{d^\times y}{|y|}dk\\
    &=\int_{F^\times}\sum_{z\in \wt{Z_*}\bs\wt{Z}}
    W_+W_-f\left[z\begin{pmatrix}y & 0\\ 0 & 1\end{pmatrix}\right]\frac{d^\times y}{|y|}\\
    &=\begin{cases}\sum_{n\geq0}C_{+,\psi}C_{-,\bar{\psi}}(\eta_{0,-3n})\theta_1(\varpi)^{3n} |\varpi|^{\frac{9n}{2}}+\sum_{n\geq0}C_{+,\psi}C_{-,\bar{\psi}}(\eta_{0,-3n-1})\theta_1(\varpi)^{3n+1} |\varpi|^{\frac{9n+3}{2}}, & d=1;\\
    \sum_{n\geq0}C_{+,\psi}C_{-,\bar{\psi}}(\eta_{0,-3n})\theta_1(\varpi)^{3n} |\varpi|^{\frac{9n}{2}}+\sum_{n\geq0}C_{+,\psi}C_{-,\bar{\psi}}(\eta_{0,-3n-2})\theta_1(\varpi)^{3n+1} |\varpi|^{\frac{9n+3}{2}}, & d=3
    \end{cases}\\
    &=\begin{cases}\frac{1+C_{+,\psi}C_{-,\bar{\psi}}(\eta_{0,-1})\theta_1(\varpi)|\varpi|^{3/2}}{1-|\varpi|^{3/2}\chi_{+,2}\chi_{-,2}\theta_1(\varpi^3)}, & d=1;\\
    \frac{1+C_{+,\psi}C_{-,\bar{\psi}}(\eta_{-1,-2})\theta_1(\varpi)|\varpi|^{3/2}}{1-|\varpi|^{3/2}\chi_{+,2}\chi_{-,2}\theta_1(\varpi^3)}, & d=3
    \end{cases}\\
&=\frac{1+|\varpi|^{-1/2}\chi_{+,1}\chi_{-,1}(\varpi^3)\theta_1^2\theta_2(\varpi)}{1-|\varpi|^{3/2}\chi_{+,2}\chi_{-,2}\theta_1(\varpi^3)}\\
&=\frac{1-|\varpi|\theta_1\theta_2^{-1}(\varpi)}{(1-|\varpi|^{-1/2}\chi_{+,1}\chi_{-,1}(\varpi^3)\theta_1^2\theta_2(\varpi))(1-|\varpi|^{3/2}\chi_{+,2}\chi_{-,2}\theta_1(\varpi^3))}.
\end{align*}
Here in the last equalities, we use  $\omega\omega_+\omega_-=1$ on $\wt{Z^{(c)}}$ and $\chi_{\pm}$ is exceptional. 

Use the invariant pairing on $\sigma_{\pm}\times \sigma_{\pm}^\vee$ in term of the Whittaker models and on $\sigma\times\sigma^\vee$ in term of induced model. By the decomposition formula in Proposition \ref{Tri-II}, we have
\begin{align*}
&I( W_+\otimes W_-\otimes f, W_+^\vee\otimes W_-^\vee\otimes f^\vee)\\
&=\zeta_F(1)\frac{1-|\varpi|\theta_1\theta_2^{-1}(\varpi)}{(1-|\varpi|^{-1/2}\chi_{+,1}\chi_{-,1}(\varpi^3)\theta_1^2\theta_2(\varpi))(1-|\varpi|^{3/2}\chi_{+,2}\chi_{-,2}\theta_1(\varpi^3))}\\
&\times \frac{1-|\varpi|\theta_1^{-1}\theta_2(\varpi)}{(1-|\varpi|^{-1/2}\chi_{+,2}^{-1}\chi_{-,2}^{-1}(\varpi^3)\theta_1^{-2}\theta_2^{-1}(\varpi))(1-|\varpi|^{3/2}\chi_{+,1}^{-1}\chi_{-,1}^{-1}\theta_1^{-1}(\varpi^3))}\\
&=\frac{\zeta_F(1)^2}{L(1,\sigma,\ad)}\frac{1}{(1-|\varpi|^{-1/2}\chi_{+,1}\chi_{-,1}(\varpi^3)\theta_1^2\theta_2(\varpi))(1-|\varpi|^{3/2}\chi_{+,2}\chi_{-,2}
\theta_1(\varpi^3))}\\
&\times\frac{1}{(1-|\varpi|^{-1/2}\chi_{+,1}\chi_{-,1}(\varpi^3)\theta_1\theta_2^2(\varpi))(1-|\varpi|^{3/2}\chi_{+,2}\chi_{-,2}\theta_2(\varpi^3))}.
\end{align*}
Here again in the last equalities, we use the central character condition $\omega\omega_+\omega_-=1$ on $\wt{Z^{(c)}}$. By the assumption $\chi_\pm$ is exceptional and $\omega_{\pm}$ is trivial, we obtain when $\omega_\pm=1$, 
\[I( W_+\otimes W_-\otimes f, W_+^\vee\otimes W_-^\vee\otimes f^\vee)=\begin{cases}\frac{\zeta_F^2(1)L(1/2,\sigma,\Sym^3)}{L(1,\sigma,\ad)}, &\text{if } \chi_{+,1}\chi_{-,1}(\varpi^3)=|\varpi|;\\
\frac{\zeta_F^2(1)L(1,\sigma,\Sym^3)}{L(1/2,\sigma,\Sym^3)L(1,\sigma,\ad)}, &\text{if } \chi_{+,1}\chi_{-,1}(\varpi^3)=-|\varpi|.\end{cases}\]
Note that $(W_\pm,W_\pm^\vee)=\zeta_F(1)$ and the stated formula follows.
\end{proof}
\subsection{Local co-period: Archimedean case}\label{Archimedean}
In this subsection, we study the trilinear period in the Archimedean case. Since $F$ is archimedean and contains $\mu_3$, we have $F\cong\BC$ and the metaplectic cover splits, i.e. $\wt{G}\cong \mu_3\times G$. Again, a smooth character $\chi: \wt{T_*}\to\BC^\times$ is called exceptional if $\chi(s(\diag\{x^3,x^{-3}\}))=|x|$ for all $x\in \BC^\times$. As $\wt{G}=\mu_3\times G$, one has  $V(\chi)=I_{\wt{B}}^{\wt{G}}\chi$ is irreducible for $\chi$ exceptional. 

Take $\chi_{\pm}$ on $\wt{T_*}$ exceptional and $\chi_\pm|_{\mu_3}=\epsilon^\pm$. Take any smooth character $\theta: T\to\BC^\times$ and set $\sigma=I_B^G\theta$. Let $\omega_*$ be the central character of $\sigma_*$ for $*=\pm,\emptyset$. Clearly, $\Hom_G(\sigma\otimes\sigma_+\otimes\sigma_-,\BC)=0$ unless $\omega\omega_+\omega_-=1$. In the following, we assume $\omega\omega_+\omega_-=1$.
\begin{prop}[\cite{Lok01}] $\dim \Hom_G(\sigma\otimes\sigma_+\otimes\sigma_-,\BC)=1$.
\end{prop}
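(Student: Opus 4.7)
The plan is to reduce the statement to the multiplicity-one theorem of Loke for trilinear invariant forms on principal series of $\GL_2$ over an archimedean local field. The key observation is that because $F\cong\BC$ the metaplectic cover splits canonically, giving $\wt G\cong\mu_3\times G$. Under this splitting each exceptional representation factors as a tensor product
\[
\sigma_\pm\;\cong\;\epsilon^\pm\boxtimes\tau_\pm,\qquad \tau_\pm:=I_B^G(\chi_\pm|_T),
\]
which are ordinary (non-genuine) principal series of $\GL_2(\BC)$, while $\sigma=I_B^G\theta$ is inflated trivially along the $\mu_3$-factor. The central character hypothesis $\omega\omega_+\omega_-=1$ is automatic on $\mu_3$, since $\epsilon^+\cdot\epsilon^-=1$, and on $Z(G)$ it becomes the usual compatibility needed for a $G$-invariant trilinear form to exist.

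After this reduction one has
\[
\Hom_G(\sigma\otimes\sigma_+\otimes\sigma_-,\BC)\;\cong\;\Hom_G(\sigma\otimes\tau_+\otimes\tau_-,\BC),
\]
so the statement is equivalent to computing the dimension of the space of $G$-invariant trilinear forms on three smooth principal series of $\GL_2(\BC)$ whose central characters multiply to the trivial one. This is precisely the setting treated by Loke in \cite{Lok01}, whose classification gives the upper bound $\dim\leq 1$ in the generic situation; this is the substantive input and the only place where real work is involved.

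For the matching lower bound I would exhibit a concrete nonzero element as the archimedean analogue of the Rankin--Selberg-type trilinear integral: moving two of the factors into their Kirillov or Whittaker models and integrating a product of two Whittaker functions against a flat section of the third over $N\bs B$, then regularizing by meromorphic continuation in the inducing parameters. Convergence and non-vanishing in a dense subset of the parameter space, together with the one-dimensionality established by Loke, force the trilinear form to be nonzero for all parameters satisfying the central character condition (a generic member of the Hom space extends by analytic continuation). The only real obstacle is the upper bound from \cite{Lok01}; everything else is formal verification that the hypotheses there apply.
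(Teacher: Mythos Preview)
Your reduction is exactly what the paper has in mind: in the archimedean subsection the paper notes that $F\cong\BC$ forces the cover to split as $\wt{G}\cong\mu_3\times G$, so that $V(\chi)=I_{\wt{B}}^{\wt{G}}\chi$ is already the irreducible exceptional representation and factors as $\epsilon^{\pm}\boxtimes I_B^G(\chi_\pm|_T)$; the proposition is then stated with the citation \cite{Lok01} in the header and no further argument. So your proposal and the paper agree on the approach, and you have supplied more detail than the paper does.

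One small comment: you phrase Loke's input as giving only the upper bound ``in the generic situation'' and then propose building the nonzero form by hand via a regularized Rankin--Selberg integral. In fact Loke's classification already yields $\dim=1$ (not merely $\leq 1$) for three irreducible principal series of $\GL_2(\BC)$ with trivial product of central characters, so the separate lower-bound construction is not strictly needed here. That said, the explicit integral you describe is precisely what the paper exhibits immediately afterwards (Propositions \ref{CanA} and \ref{Tri-III}), so your instinct is aligned with the paper's subsequent development.
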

Assume now moreover $\omega_*$ is unitary. Let $\lambda(\sigma)$ be the real number such that $\theta\delta^{-\lambda(\sigma)}$ is unitary. Assume in the following $|\lambda(\sigma)|<1/6$, which holds if $\sigma$ is the local component of a cuspidal automorphic $\GL_2(\BA)$-representation. The following result follows from \cite[Proposition 2.1]{Ich08}:
\begin{prop}\label{CanA}Fix  $\wt{G}$-invariant pairings $(-,-)_*$ on $\sigma_*\times\sigma_*^\vee$ for $*=\emptyset,\pm$. The integration of matrix coefficients 
\[\int_{Z\bs G}(f,f^\vee)(f_+,f_+^\vee)_+(f_-,f_-^\vee)_-dg\]
converges absolutely for any $f_*^\prime\in\sigma_*^\prime$, $*=\emptyset,\pm$ and $\prime=\emptyset,\vee$. The space \[\Hom_{G}(\sigma\otimes\sigma_+\otimes\sigma_-,\BC)\times \Hom_{G}(\sigma^\vee\otimes\sigma_+^\vee\otimes\sigma_-^\vee,\BC)\] has a generator given by 
\[I:\ (f\otimes f_+\otimes f_-,f^\vee\otimes f_+^\vee\otimes f_-^\vee)\mapsto \int_{Z\bs G}(f,f^\vee)(f_+,f_+^\vee)_+(f_-,f_-^\vee)_-dg.\]
\end{prop}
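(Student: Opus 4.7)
The plan is to reduce everything to the triple product setting on $G=\GL_2(\BC)$, where the statement is precisely (a special case of) \cite[Proposition 2.1]{Ich08}. First, I would use the splitting $\wt{G}=\mu_3\times G$ to write each $\sigma_*$ as $\epsilon_*\boxtimes\tau_*$ for an irreducible admissible $G$-representation $\tau_*$, where $\epsilon,\epsilon_+,\epsilon_-$ are characters of $\mu_3$. The hypothesis $\omega\omega_+\omega_-=1$, restricted to $\mu_3$, forces $\epsilon\cdot\epsilon_+\cdot\epsilon_-=1$, so the integrand $(f,f^\vee)(f_+,f_+^\vee)_+(f_-,f_-^\vee)_-$ descends to a function on $Z\bs G$. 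Thus the problem is genuinely about $G$-invariant trilinear forms on $\tau\otimes\tau_+\otimes\tau_-$.

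Next, I would verify absolute convergence of the matrix coefficient integral. The representations $\sigma_\pm$ are exceptional principal series of the (split) cover and, via the identification above, correspond to unitary principal series $\tau_\pm=I_B^G\chi_\pm'$ of $G$ with $\chi_\pm'$ of essentially unitary type (the exceptional character is, up to twist, of the form $|\cdot|^{1/2}$-type on cubes, and the central character is unitary). Combined with the input bound $|\lambda(\sigma)|<1/6$, the three representations $\tau,\tau_+,\tau_-$ lie strictly inside the tempered + $\varepsilon$ window, and their matrix coefficients satisfy Harish-Chandra-type decay estimates $|(\tau_*(g)f_*,f_*^\vee)_*|\ll\Xi(g)^{1-\eta}$ for some $\eta>0$, uniformly in $K$-finite vectors. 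The product therefore decays strictly faster than $\Xi(g)^3$, which is integrable on $Z\bs G$ by the standard Harish-Chandra estimate; this is exactly the argument carried out in \cite{Ich08}.

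For the non-vanishing statement, since $\dim\Hom_G(\sigma\otimes\sigma_+\otimes\sigma_-,\BC)=1$ by Lok's theorem (and similarly for the contragredient triple), it suffices to exhibit vectors making $I$ nonzero. Here I would choose $f_*,f_*^\vee$ so that each matrix coefficient $(\sigma_*(g)f_*,f_*^\vee)_*$ is real, continuous, and strictly positive in a neighborhood of the identity of $Z\bs G$ (which is possible since the pairings are non-degenerate). Then the triple product is strictly positive on an open neighborhood of $1$, so $I\neq0$. Together with Lok's multiplicity one, this shows $I$ generates both Hom spaces simultaneously, exactly as in the argument of \cite{Ich08}.

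The main obstacle is the convergence step: one must verify that the explicit exponent of the exceptional characters $\chi_\pm$ combined with the generic bound $|\lambda(\sigma)|<1/6$ keeps the product of matrix coefficients inside the integrability threshold on $Z\bs G$. Once this is checked (which amounts to an explicit Langlands parameter computation for $\sigma_\pm$ together with the Harish-Chandra estimate $\int_{Z\bs G}\Xi(g)^{2+\delta}\,dg<\infty$), the remaining assertions follow formally from the multiplicity one result of Lok and the positivity argument sketched above.
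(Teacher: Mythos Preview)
Your reduction to the linear triple product on $G=\GL_2(\BC)$ via the splitting $\wt{G}=\mu_3\times G$ is exactly what the paper does: it simply cites \cite[Proposition~2.1]{Ich08} for the whole statement. Your convergence argument via Harish--Chandra bounds is also the right one, and the check that the exceptional parameter $\lambda(\sigma_\pm)=1/6$ together with $|\lambda(\sigma)|<1/6$ keeps the total exponent strictly below the integrability threshold is precisely the verification needed to invoke Ichino's result.

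The gap is in your non-vanishing step. Positivity of the integrand on a neighborhood of the identity does \emph{not} imply $I\neq 0$: matrix coefficients of irreducible admissible representations of $\GL_2(\BC)$ are never compactly supported, so the product can be negative with large absolute value on the rest of $Z\bs G$, and nothing prevents cancellation. What actually works is one of the following. Either exploit the unitary structure (all three representations are unitarizable complementary series under the stated hypotheses) to recognize $I(v,\bar v)$ as a positive semi-definite Hermitian form on $\sigma\otimes\sigma_+\otimes\sigma_-$ and argue that its kernel is exactly the orthocomplement of the $G$-invariant functionals, so $I\neq 0$ if and only if $\Hom_G(\sigma\otimes\sigma_+\otimes\sigma_-,\BC)\neq 0$---which you already know from Loke's theorem. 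Or, as the paper does immediately afterwards in Proposition~\ref{Tri-III}, unfold $I$ into a product $\zeta_F(1)\,\Psi\cdot\Psi^\vee$ of Rankin--Selberg type integrals and exhibit test vectors (via the Whittaker and induced models) making each factor nonzero. Either route closes the gap; the local-positivity shortcut you propose does not.
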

Embed  $\sigma_\pm$ (resp, $\sigma_\pm^\vee$) into $I_{\wt{ZN}}^{\wt{G}}\omega_{\pm}\boxtimes\psi^{\pm}$ (resp. $I_{\wt{ZN}}^{\wt{G}}\omega_{\pm}^{-1}\boxtimes\psi^{-\pm}$) and consider the pairing 
\[(-,-)_\pm: \sigma_\pm\times\sigma_\pm^\vee,\quad (W_\pm,W_\pm^\vee)\mapsto\int_{F^\times}W_\pm W_\pm^\vee\left[\begin{pmatrix}y & 0\\ 0 & 1\end{pmatrix}\right]d^\times y\]
Consider the pairing 
\[(-,-): \sigma\times\sigma^\vee\lra\BC, \quad (f,f^\vee)\mapsto\int_{K}ff^\vee(k)dk\]
where $K=\RU(2)\subset G$ is the maximal compact subgroup.
\begin{prop}\label{Tri-III}Notations as above. For $*=\emptyset,\vee$, the integration 
\[\Psi^*(W_+^*,W_-^*,f^*):=\int_{ZN\bs G}W_+^*W_-^*f^*dg\]
converges absolutely for any $W_\pm^*\in \sigma_\pm^*$ and $f^*\in\sigma^*$. Moreover, 
\[I(W_+\otimes W_-\otimes f, W_+^\vee\otimes W_-^\vee\otimes f^\vee)=\zeta_F(1)\Psi(W_+,W_-,f)\Psi^\vee(W_+^\vee,W_-^\vee,f^\vee).\]
\end{prop}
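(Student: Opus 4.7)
The plan is to mirror Proposition \ref{Tri-II}, with the simplification that at an archimedean place $F=\BC$ the metaplectic cover $\wt{G}$ splits, so $\wt{Z_*}=\wt{Z}$ and the finite sum $\sum_{z\in\wt{Z_*}\bs\wt{Z}}$ appearing there degenerates to a single term. As in that proof, the underlying mechanism is the unfolding argument of \cite[Proposition 5.2]{Hsi21}.

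For the absolute convergence of $\Psi^*$, I would apply Lemma \ref{int} to write
\[\Psi(W_+,W_-,f) = 2\pi\int_{F^\times}\int_F (W_+ W_- f)\left[\begin{pmatrix} y & 0\\ 0 & 1\end{pmatrix}\begin{pmatrix} 1 & 0 \\ x & 1\end{pmatrix}\right]dx\,\frac{d^\times y}{|y|}.\]
The archimedean Whittaker functions on the exceptional $\sigma_\pm$ are of rapid decay as $|y|\to\infty$ by standard estimates, and near $y=0$ they are controlled by a positive power of $|y|$ dictated by the exceptional normalization $\chi_\pm(s(\diag\{x^3,x^{-3}\}))=|x|$ and the unitarity of $\omega_\pm$ (producing an exponent of $|y|^{1/3}$ from the leading Jacquet asymptotic $\delta^{1/2}\chi_\pm^w$). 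Combined with the moderate growth of $f\in I_B^G\theta$ (behaving like $|y|^{1/2+\lambda(\sigma)}$ in the $\diag\{y,1\}$ direction) and the hypothesis $|\lambda(\sigma)|<1/6$, a direct power-counting gives absolute convergence; the $x$-integration poses no problem by $K$-smoothness of $f$ and $W_\pm$.

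For the decomposition, I would expand the Whittaker matrix coefficients
\[(\pi_\pm(g)W_\pm,W_\pm^\vee)_\pm = \int_{F^\times}W_\pm\left[\begin{pmatrix}y&0\\ 0&1\end{pmatrix}g\right]W_\pm^\vee\left[\begin{pmatrix}y&0\\ 0&1\end{pmatrix}\right]d^\times y,\]
substitute into the defining integral of $I$ from Proposition \ref{CanA}, and bring the two resulting $F^\times$-integrals outside by Fubini. After the change of variable $g\mapsto \diag\{y_1,1\}^{-1}g$ on $Z\bs G$ (using left-invariance of Haar measure), the $N$-quasi-invariance of the Whittaker functions against the opposite characters $\psi^\pm$ makes the resulting factors of $\psi$ cancel in the product $W_+W_-$, so the contributions of $W_+W_-f$ on $Z\bs G$ and of $W_+^\vee W_-^\vee$ on $F^\times\times F^\times$ separate cleanly, yielding $\Psi(W_+,W_-,f)\cdot\Psi^\vee(W_+^\vee,W_-^\vee,f^\vee)$ up to the measure constant $\zeta_F(1)$ dictated by Lemma \ref{int}. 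This is the direct archimedean analogue of the manipulation in Proposition \ref{Tri-II}.

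The main obstacle is the analytic justification of Fubini and the substitution, which reduces to absolute convergence of the associated triple integral on $Z\bs G\times F^\times\times F^\times$. The quantitative bound $|\lambda(\sigma)|<1/6$, together with the precise archimedean growth/decay of exceptional Whittaker functions dictated by the normalization of $\chi_\pm$, is precisely what controls these iterated integrals uniformly, playing the role of the gauge estimates \cite[Theorem I.4.1]{KP84} in the non-archimedean setting.
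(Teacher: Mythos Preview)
Your approach is correct and matches the first method the paper mentions (``adapting \cite[Proposition 5.2]{Hsi21}''), but the paper actually details a different route. After citing the gauge estimate in \cite[Proposition 2.1]{Ich08} for the convergence of $\Psi^*$ (your power-counting is an explicit instance of this), the paper establishes the decomposition formula by a deformation argument: it invokes \cite[Lemma 4.2]{MV10} for the tempered case, then deforms $\sigma_+$ to $\sigma_{+,s}=I_{\wt{B_*}}^{\wt{G}}\chi_+\delta^s$ with flat sections $W_{+,s}^*$, observes that both $I(W_{+,s}\otimes W_-\otimes f,\ldots)$ and $\Psi(W_{+,s},W_-,f)\Psi^\vee(W_{+,s}^\vee,W_-^\vee,f^\vee)$ are holomorphic in $s$ for $|s|$ small, and concludes that their ratio (computed on test vectors) is identically $\zeta_F(1)$ since it equals $\zeta_F(1)$ whenever $\chi_+\delta^s$ is unitary.

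The deformation argument buys precisely what you flag as the main obstacle: it sidesteps the direct Fubini justification for the triple integral in the non-tempered regime, where the Whittaker asymptotics $|y|^{1/3}$ of $\sigma_\pm$ are only marginally compatible with $|\lambda(\sigma)|<1/6$. Your direct unfolding is more self-contained but leaves that absolute-convergence verification as an assertion rather than a worked estimate; the paper instead outsources the delicate analysis to the tempered case already handled in \cite{MV10} and then extends by rigidity of holomorphic functions.
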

\begin{proof}By the gauge estimation in \cite[Proposition 2.1]{Ich08}, we know the integration $\Psi^*(W^*_+,W^*_-,f^*)$ converges absolutely. One can show the decomposition formula adapting \cite[Proposition 5.2]{Hsi21}. Alternatively, we can use the following deformation argument. The formula holds when $\sigma$ is tempered  by \cite[Lemma 4.2]{MV10}. Consider $\sigma_{+,s}^*=(I_{\wt{B_*}}^{\wt{G}}\chi_+\delta^s)^*$ and a flat section $W_{+,s}^*\in \sigma_{+,s}^*$. When the absolute value of $s$ is small, both $I(W_{+,s}\otimes W_-\otimes f, W_{+,s}^\vee\otimes W_-^\vee\otimes f^\vee)$ and $\Psi(W_{+,s},W_-,f)\Psi^\vee(W_{+,s}^\vee,W_-^\vee,f^\vee)$ are holomorphic functions in   $s$. Choose   test vectors $f^*$, $W^*_\pm$ for  $\Psi^*$. Then the ratio $\frac{I}{\Psi\Psi^\vee}$ is meromorphic and  equals to $\zeta_F(1)$ when $\chi\delta^s$ is unitary. This forces it to be  $\zeta_F(1)$ when  $\Re(s)$ is small. We are done.
\end{proof}
We end this subsection with an equality of $L$-factors. Recall that
\[L(s,\phi)=2(2\pi)^{-(s+t+\frac{|\ell|}{2})}\Gamma\left(s+t+\frac{|\ell|}{2}\right)\]
 with the character \[\phi=(t,\ell):\ \BC^\times\to \BC^\times,\quad re^{i\theta}\mapsto r^{2t}e^{i\ell \theta }\quad t\in\BC, \ell\in\BN.\] 
\begin{prop}Assume  on $\mu_3\times T=\wt{T}$,  $\chi_{\pm}=\left(\epsilon^\pm, (|\cdot|^{1/6},|\cdot|^{-1/6})\right)$ and $\theta=((t,\ell),(-t,-\ell))$. Then 
\begin{align*}
(2\pi)^2\frac{L(1/2,\sigma,\Sym^3)}{L(1,\sigma,\ad)}=3^{3|\ell|}\frac{\zeta_\BC^2(2)L(1/2,\sigma\times\sigma_+\times\sigma_-)}{L(1,\sigma,\ad)L(1,\sigma_+,\ad)L(1,\sigma_-,\ad)}.
\end{align*}
\end{prop}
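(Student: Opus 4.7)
The plan is to verify the identity by expanding every $L$-factor appearing on both sides explicitly in terms of Gamma functions via the formula $L(s,(t,\ell))=2(2\pi)^{-(s+t+|\ell|/2)}\Gamma(s+t+|\ell|/2)$, and then applying Gauss's triplication formula to identify the relevant $\Gamma$-products. Since in the archimedean case $\wt{G}=\mu_3\times G$ splits, $\sigma_\pm$ is isomorphic as a $G$-representation to the principal series $I_B^G(|\cdot|^{1/6},|\cdot|^{-1/6})$, so its Langlands parameters are $\beta_1=(1/6,0),\beta_2=(-1/6,0)$, while $\sigma$ has Langlands parameters $\alpha_1=(t,\ell),\alpha_2=(-t,-\ell)$.

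First I would compute $L(1/2,\sigma,\Sym^3)$ via the $\Sym^3$ characters $(\pm 3t,\pm 3\ell),(\pm t,\pm\ell)$, yielding a product of four explicit $\Gamma$-factors and a power of $(2\pi)$. Next I would expand $L(1/2,\sigma\times\sigma_+\times\sigma_-)=\prod_{i,j,k}L(1/2,\alpha_i\beta_j\beta_k)$ into eight factors, grouping them by $\alpha_i$: for $i=1$ the four $\Gamma$-arguments are $\tfrac16+t+\tfrac{|\ell|}{2}$, $\tfrac12+t+\tfrac{|\ell|}{2}$ (twice), $\tfrac56+t+\tfrac{|\ell|}{2}$, and symmetrically for $i=2$ with $t\mapsto -t$.

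The structural core of the proof is Gauss's triplication formula
\[
\Gamma(z)\Gamma(z+\tfrac13)\Gamma(z+\tfrac23)=\frac{2\pi}{3^{3z-1/2}}\Gamma(3z).
\]
Applied with $z=\tfrac16+t+\tfrac{|\ell|}{2}$ (and again with $t\mapsto -t$), it converts the six ``$\Gamma$ at $\tfrac{k}{6}+\cdots$'' factors coming from the triple-product $L$-function into the two Sym$^3$ $\Gamma$-factors $\Gamma(\tfrac12\pm 3t+\tfrac{3|\ell|}{2})$, together with the two ``middle'' factors $\Gamma(\tfrac12\pm t+\tfrac{|\ell|}{2})$ and an explicit scalar. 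The exponents of $3$ produced by the two applications are $3^{-3t-3|\ell|/2}$ and $3^{3t-3|\ell|/2}$; their product is $3^{-3|\ell|}$, which is exactly where the factor $3^{3|\ell|}$ on the right-hand side arises. Collecting powers of $(2\pi)$ yields the relation
\[
L(1/2,\sigma\times\sigma_+\times\sigma_-)=\frac{C}{3^{3|\ell|}}L(1/2,\sigma,\Sym^3)
\]
for an explicit constant $C$ involving powers of $2$, $\pi$, and $\zeta_\BC(2)$.

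Finally, it remains to check that the residual numerical constants match, i.e.\ that $(2\pi)^2\,L(1,\sigma_+,\ad)L(1,\sigma_-,\ad)$ equals $C\,\zeta_\BC^2(2)$. Writing $L(1,\sigma_\pm,\ad)=\zeta_\BC(1)\zeta_\BC(4/3)\zeta_\BC(2/3)$ and invoking Euler's reflection identity $\Gamma(1/3)\Gamma(2/3)=\tfrac{2\pi}{\sqrt3}$ (or equivalently a further application of the triplication formula at $z=1/3$) completes the verification. The main obstacle is not conceptual but combinatorial: one must track the numerous factors of $2$, $\pi$, and $3$ consistently through the expansions so that the two applications of triplication exactly account for both the $3^{3|\ell|}$ and the $(2\pi)^2/\zeta_\BC(2)^2$ prefactors; all the analytic input is concentrated in a single invocation of Gauss's triplication formula per Langlands parameter of $\sigma$.
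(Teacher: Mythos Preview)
Your approach is correct and essentially identical to the paper's: both proofs expand all local $L$-factors in terms of $\zeta_\BC$ (equivalently $\Gamma$) and apply the Gauss multiplication formula $\Gamma(z)\Gamma(z+\tfrac13)\Gamma(z+\tfrac23)=(2\pi)3^{1/2-3z}\Gamma(3z)$ at $z=\tfrac16\pm t+\tfrac{|\ell|}{2}$ to convert the triple-product factors into the $\Sym^3$ factors, producing the $3^{3|\ell|}$ exactly as you describe. The only difference is cosmetic---the paper writes everything as $\zeta_\BC(s)$ rather than unpacking into explicit $\Gamma$'s and powers of $2\pi$---and your remark about using $\Gamma(1/3)\Gamma(2/3)=2\pi/\sqrt3$ (or another instance of triplication) to clean up $L(1,\sigma_\pm,\ad)=\zeta_\BC(1)\zeta_\BC(4/3)\zeta_\BC(2/3)$ is a helpful detail that the paper leaves implicit.
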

\begin{proof}Under the assumption, \[L(1,\sigma_\pm,\ad)=\zeta_\BC(1)\zeta_\BC(4/3)\zeta_\BC(2/3),\]
\[L(\frac{1}{2},\sigma,\Sym^3)=\zeta_\BC\left(\frac{1}{2}+3t+\frac{3|\ell|}{2}\right)\zeta_\BC\left(\frac{1}{2}-3t+\frac{3|\ell|}{2}\right)\zeta_\BC\left(\frac{1}{2}+t+\frac{|\ell|}{2}\right)\zeta_\BC\left(\frac{1}{2}-t+\frac{|\ell|}{2}\right)\]
and
\begin{align*}L(1/2,\sigma\times\sigma_+\times\sigma_-)&=\zeta_\BC\left(\frac{5}{6}+t+\frac{|\ell|}{2})\zeta_\BC(\frac{1}{6}+t+\frac{|\ell|}{2}\right)\zeta_\BC^2\left(\frac{1}{2}+t+\frac{|\ell|}{2}\right)\\
&\times \zeta_\BC\left(\frac{5}{6}-t+\frac{|\ell|}{2}\right)\zeta_\BC\left(\frac{1}{6}-t+\frac{|\ell|}{2}\right)\zeta_\BC^2\left(\frac{1}{2}-t+\frac{|\ell|}{2}\right).
\end{align*}
From the multiplication formula of $\Gamma$-function:
\[\Gamma(z)\Gamma\left(z+\frac{1}{3}\right)\Gamma\left(z+\frac{2}{3}\right)=(2\pi)3^{\frac{1}{2}-3z}\Gamma(3z)\]
we deduce the stated formula.
\end{proof}
\section{Global co-period}
In this section, we shall study the global co-period. We shall formulate an Ichino-Ikeda type conjecture relating the co-period  and the central critical value of symmetric cube L-function. We will verify this conjecture for Eisenstein series.

Let  $F$ be a number field containing all $3$-th roots of unity $\mu_3$. Let $\CM$ be the set of all places of $F$.  Identify $\mu_3$ with  $\mu_3(F_v)$ for all places $v\in\CM$ and fix an embedding $\epsilon:\ \mu_3\to\BC^\times$. Let $\BA=\otimes_{v\in\CM}^\prime F_v$ be the adeles.  Fix an additive character $\psi:\ F\bs\BA\to\BC^\times$  such that $\psi_{\BC}=\exp^{2\pi i(z+\bar{z})}$. Let $\BA^\times=\otimes_{v\in\CM}^\prime F_v^\times$ be the ideles and $|\cdot|_\BA:=\prod_v |\cdot|$ be the norm character on $\BA^\times$.

Let $G=\GL_2$ be the reductive group and $Z\subset T=AZ\subset B=TN\subset G$ be the center, the diagonal torus and the upper Borel subgroup respectively. Here  $A=\diag\{*,1\}\subset T$ and $N=\begin{pmatrix}1 & * \\ 0 &1\end{pmatrix}$. Let $\delta=\prod_v\delta_v:\ B(\BA)\to\BC^\times$
be the modulus character. Let $K=\prod_v K_v$ where $K_v=\GL_2(\CO_{F_v})$ when $v<\infty$ and $K_v=U(2)(F_v)$ when $v\mid \infty$. Let $H:\ G(\BA)\to \BC$ be the height function given by 
\[ntk\mapsto\log\delta(t),\quad n\in N(\BA),\ t\in T(\BA),\ k\in K.\]

In the previous section, we choose Haar measures $d x_v$ (resp. $d^\times y$ resp. $d g_v$) on $F_v$ (resp. $F_v^\times$. resp. $G(F_v)$) for each $v$. Let $dx=\prod dx_v$ (resp.  $d^\times y=\prod d^\times y_v$ resp.  $dg=\prod_v dg_v$) be the Haar measure on $\BA$ (resp. $\BA^\times$, resp.  $G(\BA)$).  Let $d^\tau g$ be the Haar measure on $G(\BA)$ such that $\Vol(Z(\BA)G(F)\bs G(\BA),d^\tau g)=2$. 
Note that \[\Vol(Z(\BA)G(F)\bs G(\BA),d g)=\zeta_F(2)\prod_{v<\infty}\Vol^{-1}(\CO_{F_v},dx)\Vol(Z(\BA)G(F)\bs G(\BA),d^\tau g)\]

\subsection{Preliminaries on theta series}
Fix an integer $c\in\BZ/3\BZ$ and define the metaplectic cover  $p:\ \wt{G}(F_v)\to G(F_v)$ for all $v$ with respect to the Kubota 2-cocycle $\sigma_K^{(c)}$. 
For each $v\in\CM$, fix a  subgroup $K_v^{\prime}\subset K_v$ such that $p:\ \wt{K_v^\prime}\to K_v^\prime$ admits a group section $\kappa_v$. Assume for almost all $v$, $\kappa_v$ is the Kubota lifting and  set $K_v^{\prime,*}=\kappa_v(K_v^\prime)$.
Formulate the restricted tensor product $\prod_{v\in\CM}^\prime \wt{G}(F_v)$ with respect to the chosen $K_v^{\prime,*}$. Let \[\wt{G(\BA)}:= M\bs \prod_v^\prime \wt{G}(F_v),\quad M=\{(\xi,\xi^{-1})\in F_v^\times\times F_{v^\prime}^\times| \xi\in\mu_3,\ v\neq v^\prime\in \CM\}\]
Then we have an exact sequence 
\[0\lra\mu_3\lra\wt{G}(\BA)\stackrel{p}{\lra} G(\BA)\lra 0.\]
According to \cite[Section 0.II]{KP84}, the preferred sections $\{s_v|v\in\CM\}$ induces a group splitting 
\[s:\ G(F)\to \wt{G}(\BA).\] 
Via $s$, we will view $G(F)$ as a subgroup of $\wt{G}(\BA)$.

For group $H\subset G(\BA)$, let $\wt{H}=p^{-1}(H)$. Recall that for each place $v\in\CM$, 
\[T_3(F_v)=\{\diag\{a,b\}\in T(F_v)|a,b\in F_v^{\times,3}\},\]
\[ A_3(F_v)=\{\diag\{a,1\}\in A(F_v)|a\in F^{\times,3}\},\quad Z_3(F_v)=\{\diag\{z,z\}\in Z(F_v)|z\in F_v^{\times,3}\}.\]
Let $B_*(\BA)=\prod_v^\prime B_*(F_v)$ and $T_3(\BA)=\prod_v^\prime T_3(F_v)\subset T_*(\BA)=\prod^\prime_v T_*(F_v)$. Let 
\[A_3(\BA)=\prod^\prime_v A_3(F_v)\subset A_*(\BA):=A(F)A_3(\BA),\quad Z^{(c)}(\BA)=\prod_v^\prime Z^{(c)}(F_v)\subset Z_*(\BA)=\prod^\prime_v Z_{*}(F_v).\]  
By the construction, $Z_*(\BA)\subset T_*(\BA)$  and $T_*(\BA)=Z_*(\BA)A_*(\BA)$.   Let $T^\prime(\BA)=\prod_v^\prime T^\prime(F_v)$ where \[T^\prime(F_v)=\{(a,b)\in T(F_v)| ab\in Z_*(F_v)\}.\] 

According to \cite[Lemma II.1.1]{KP84}, $\wt{T_*}(\BA)=T(F)\wt{T_nZ^{(c)}}(\BA)$ is a maximal abelian subgroup in $\wt{T}(\BA)$. We also record the following result, which follows from the same  argument:
\begin{lem}\label{center}$\wt{Z_*}(\BA)=Z(F)\wt{Z^{(c)}}(\BA)$ is a maximal abelian subgroup in $\wt{Z}(\BA)$.
\end{lem}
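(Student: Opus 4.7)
The plan is to mimic the argument of \cite[Lemma II.1.1]{KP84}, reducing the assertion to a duality property of the cubic Hilbert symbol on the ideles.

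First, I would compute the commutator bracket on $\wt{Z}(\BA)$ explicitly. For $z_i=\diag\{a_i,a_i\}\in Z(\BA)$ one has $K(z_i)=a_i$, so a direct manipulation of the Kubota cocycle gives
\[
\sigma_K^{(c)}(z_1,z_2)=(a_1,a_2)_3\cdot (a_1,a_2)_3^{4c}=(a_1,a_2)_3^{1+4c}.
\]
Consequently for lifts $\tilde{z}_i\in\wt{Z}(\BA)$,
\[
[\tilde{z}_1,\tilde{z}_2]=\prod_{v}(a_{1,v},a_{2,v})_{3,v}^{2(1+4c)}.
\]
Since $d=3/\gcd(1+4c,3)$, the exponent $2(1+4c)$ is coprime to $d$, so this commutator is trivial precisely when $\prod_v(a_{1,v},a_{2,v})_{3,v}=1$ (or automatically when $d=1$).

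Next, I would verify that $Z(F)\wt{Z^{(c)}}(\BA)$ is abelian. Any $\tilde{z}$ with $p(\tilde{z})\in Z^{(c)}(\BA)=\diag\{F^{\times,d},F^{\times,d}\}$ gives trivial cubic symbols against everything (as $a\in (\BA^\times)^d$ makes $(a,\cdot)_3^{2(1+4c)}=1$), while any $\tilde{z}$ with $p(\tilde{z})\in Z(F)$ commutes with every lift by Hilbert reciprocity. Combined with Lemma II.1.1 of \cite{KP84} identifying the various splittings, this shows $Z(F)\wt{Z^{(c)}}(\BA)$ is a genuine abelian subgroup of $\wt{Z}(\BA)$.

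For maximality, suppose $\tilde{z}\in\wt{Z}(\BA)$ centralizes $Z(F)\wt{Z^{(c)}}(\BA)$, and write $p(\tilde{z})=\diag\{a,a\}$ with $a\in\BA^\times$. The commutator computation forces
\[
\prod_{v}(a_v,b)_{3,v}^{2(1+4c)}=1\qquad\text{for every }b\in F^\times,
\]
and since $\gcd(2(1+4c),3)=1$ this is equivalent to $\prod_v(a_v,b)_{3,v}=1$ for all $b\in F^\times$. The key input is then the classical fact that $F^\times\cdot(\BA^\times)^3$ is a maximal isotropic subgroup of $\BA^\times$ for the global cubic Hilbert pairing; equivalently, it is its own annihilator under the pairing with $F^\times$. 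This forces $a\in F^\times\cdot(\BA^\times)^d=Z(F)Z^{(c)}(\BA)$, so $\tilde{z}\in Z(F)\wt{Z^{(c)}}(\BA)$, proving maximality. The only non-formal ingredient is this global annihilator statement for cubic Hilbert symbols, which is the same step that drives the proof of \cite[Lemma II.1.1]{KP84} and which I expect to be the main point to cite carefully.
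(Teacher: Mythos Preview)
Your proposal is correct and is precisely the approach the paper intends: the paper gives no argument beyond ``follows from the same argument'' as \cite[Lemma II.1.1]{KP84}, and what you have written is exactly that argument specialized from $T$ to $Z$, with the commutator computed via the Kubota cocycle and maximality reduced to the global duality statement that the annihilator of $F^\times$ under the cubic Hilbert pairing is $F^\times(\BA^\times)^3$. Your case split on $d$ is also handled correctly (the $d=1$ case being trivial since then $\wt{Z^{(c)}}(\BA)=\wt{Z}(\BA)$).
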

Let $\chi$ be a smooth character on $\wt{T_3Z^{(c)}}(\BA)$, trivial on $T(F)\cap\wt{T_3Z^{(c)}}(\BA)$. Extend $\chi$ to $\wt{T_*}(\BA)$ by requiring $\chi(T(F))=1$. Consider the induced representation on the space of $\wt{K}$-smooth functions
\[I_{\wt{B_*}(\BA)}^{\wt{G}(\BA)}\chi=\left\{f:\ \wt{G}(\BA)\to\BC| f(bg)=\chi(b)\delta^{1/2}(b)f(g)\quad  \forall\ b\in\wt{B_*}(\BA), g\in\wt{G}(\BA), f\ \text{is}\ \wt{K}-\text{smooth}\right\}.\]
We have the intertwining operator $I=\otimes I_v: I_{\wt{B_*}(\BA)}^{\wt{G}(\BA)}\chi\to I_{\wt{B_*}(\BA)}^{\wt{G}(\BA)}\chi^w$ with  
\[I_v:\ I_{\wt{B_*}(F_v)}^{\wt{G}(F_v)}\chi_v\lra I_{\wt{B_*}(F_v)}^{\wt{G}(F_v)}\chi_v^w, \quad f\mapsto \left( g\mapsto\int_{N(F_v)}f(wng)dn\right).\]
Assume in the following $\chi|_{\wt{Z^{(c)}}(\BA)}$ is unitary. Let $\lambda(\chi)\in \BR$ such that $\chi\delta^{-\lambda(\chi)}$ is unitary. When  $\lambda(\chi)>1/2$, the infinite sum $\sum_{\gamma\in B(F)\bs G(F)}f(\gamma g)$ converges  absolutely for any $\wt{K}$-smooth $f\in I_{\wt{B_*}(\BA)}^{\wt{G}(\BA)}\chi$ and any $g\in\wt{G}(\BA)$. Thus when $\lambda(\chi)>1/2$, we have the  Eisenstein series machinery:
\[I_{\wt{B_*}(\BA)}^{\wt{G}(\BA)}\chi\lra\CA(G(F)\bs \wt{G}(\BA),\BC),\quad  f\mapsto \left(E(-,\chi,f):\ g\mapsto \sum_{\gamma\in B(F)\bs G(F)}f(\gamma g)\right).\]
In general, we consider the meromorphic family $E(-,\chi\delta^s, f_s)$ where $f_s$ is the flat section such that \[f_s(ntk):=f(ntk)\delta^s(t),\quad n\in N(\BA),\ t\in T(\BA),\ k\in\prod_v K_v.\]

The character $\chi$ is called \textbf{exceptional} if $\chi\left(s(x^3,x^{-3})\right)=|x|_{\BA}$ for all $x\in \BA^\times$. In this case, $E(-,\chi\delta^s, f_s)$ has a simple pole at $s=0$. Denote  by $\theta(-,\chi,f):=\Res_{s=0} E(-,\chi\delta^s,f_s)$.  Then by \cite[Theorem II.2.1]{KP84},  $\theta(-,\chi,f)$ forms an irreducible $\wt{G}(\BA)$-representation $\Theta(\chi)$ when $f$ varies in $I_{\wt{B_*}(\BA)}^{\wt{G}(\BA)}(\chi)$. Moreover one has the tensor product decomposition over $\BC[\mu_3]$
\[V_0(\chi)=\otimes_{\BC[\mu_3]}^\prime V_0(\chi_v).\]

We end this subsection by  the Whittaker-Fourier coefficients of $\theta(g,\chi,f)$. Let $\CS(\wt{T}(\BA),\BC)$ be the space of functions $f:\ \wt{T}(\BA)\to\BC$ such that
\begin{itemize}
    \item $f(th)=\chi\delta^{1/2}(t)^{-1}f(h)$  for all $t\in \wt{T_*}(\BA)$, $h\in \wt{T}(\BA)$,
    \item for all place $v\in\CM$, it satisfies the relation given by Formula \ref{cons}.
\end{itemize}
By Proposition \ref{Jac}, $\CS(\wt{T}(\BA),\BC)$ is an irreducible $\wt{Z}(\BA)$-representation. Since $\wt{T^\prime}(\BA)\wt{Z}(\BA)=\wt{T}(\BA)$, we deduce from \cite[Corollary I.3.4]{KP84} and the very construction of $\wt{T^\prime}(\BA)$ that up to scalar, there exists a unique function $C=\otimes_v C_v\in \CS(\wt{T}(\BA),\BC)$ supported on $\wt{T^\prime}(\BA)$. 
\begin{prop}\label{Fourier coefficients} For any $f=\otimes_vf_v\in I_{\wt{B_*}(\BA)}^{\wt{G}(\BA)}\chi$ and $g\in \wt{G}(\BA)$
\[\int_{N(F)\bs N(\BA)}\theta(ng,\chi,f)dn=\zeta_F(2)^{-1}\prod_v \zeta_{F_v}^{-1}(1)\zeta_{F_v}(2)I_v(f_v)(g).\]
Moreover, up to scalar \[\int_{N(F)\bs N(\BA)}\bar{\psi}(n)\theta(ng,\chi,f)dn=\bigotimes_v(C_v, f_v)(g_v):=\bigotimes_v\left(\sum_{t_v\in \wt{T_*}(F_v)\bs\wt{T^\prime}(F_v)}C_v(t_v)\int_{N(F_v)}f_v(t_vwn_vg_v)dn_v\right).\]
\end{prop}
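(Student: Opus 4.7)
The plan is to unfold the Eisenstein series $E(g,\chi\delta^s,f_s)$ against the Bruhat decomposition $G(F)=B(F)\sqcup B(F)wN(F)$ and then extract the residue at $s=0$ coset-by-coset. For $\Re(s)\gg 0$ convergent, the sum over $B(F)\bs G(F)$ splits as
\[
E(g,\chi\delta^s,f_s)=f_s(g)+\sum_{n\in N(F)}f_s(wng),
\]
so integration against the trivial character on $N(F)\bs N(\BA)$ collapses the second sum to the global intertwining integral $\prod_v I_v(f_{v,s})(g)$, while integration against $\bar\psi$ annihilates the $f_s(g)$ term (because $\psi$ is nontrivial on $N(F)\bs N(\BA)$) and unfolds the second sum into the product of local Jacquet integrals $\prod_v\int_{N(F_v)}\bar\psi_v(n)f_{v,s}(wng_v)\,dn$.

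For the constant term, $f_s(g)$ is holomorphic (in fact affine) in $s$, so $\Res_{s=0}$ picks up only the intertwining contribution. I would factor the global operator $\prod_v I_v(f_{v,s})$ through the metaplectic Gindikin-Karpelevich quotient: for exceptional $\chi$ on the cubic cover this quotient has the shape $\zeta_F(3s+1)/\zeta_F(3s+2)$ (modulo a shift), whose simple pole of $\zeta_F$ at $s=0$ produces the residue. Dividing out the local normalizing zeta factors at each place and matching with the definition of $I_v$ in Subsection~2.2 then yields the claimed constant $\zeta_F(2)^{-1}\prod_v\zeta_{F_v}^{-1}(1)\zeta_{F_v}(2)$.

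For the Whittaker coefficient, the residue of $\prod_v W_{v,s}(g_v)$ at $s=0$ produces a nonzero $(\wt{N}(F_v),\psi_v)$-equivariant linear functional on the exceptional quotient $V_0(\chi_v)$ at each place. By Proposition~\ref{Jac}, the space $V_0(\chi_v)_{\psi_v}$ is one-dimensional, so this Whittaker functional is unique up to scalar. On the other hand, the pairing $f_v\mapsto (C_v,f_v)(g_v)$ is built from the unique (up to scalar) function $C_v\in S(\chi_v,\psi_v)$ which is supported on $\wt{T^\prime}(F_v)$ and satisfies the recursion~(\ref{cons}); this support condition plus the recursion characterize exactly the Whittaker functional that factors through the exceptional quotient. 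Hence at every place, both sides of the asserted identity are nonzero scalar multiples of the same Whittaker functional, and the global identity follows up to a single global scalar.

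The principal obstacle is bookkeeping in the constant-term part: identifying the precise normalizing zeta quotient for the metaplectic intertwining operator on $\wt{G}^{(c)}$ with exceptional $\chi$, computing its residue, and verifying that the remaining local pieces reassemble into $I_v(f_v)$ in the normalization fixed in Section~2, including at ramified places. The relevant global calculation is essentially carried out in \cite{KP84}, so the obstacle is more notational than conceptual. The Whittaker-coefficient part is substantially cleaner because the ``up to scalar'' caveat absorbs all numerical constants, and the only genuine input beyond Bruhat unfolding is the uniqueness from Proposition~\ref{Jac}.
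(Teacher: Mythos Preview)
Your constant-term argument is fine and is essentially what the paper does (it simply cites \cite[Proposition II.1.2]{KP84} for the computation you sketch).

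The Whittaker part, however, has a real gap. Your claim ``By Proposition~\ref{Jac}, the space $V_0(\chi_v)_{\psi_v}$ is one-dimensional'' is false in general: Proposition~\ref{Jac} says only that $V_0(\chi_v)_{\psi_v}$ is an \emph{irreducible genuine $\wt{Z}(F_v)$-representation}, and its dimension is $d|d|_v^{-1/2}$, which is $>1$ whenever $d=3$ (i.e.\ for $c\equiv 0,1\pmod 3$) or $|3|_v<1$. The paper explicitly notes in the introduction that ``the exceptional theta series could have multiple Whittaker functionals.'' So local uniqueness cannot single out the functional $(C_v,-)$ among the $d|d|_v^{-1/2}$ available ones, and your argument does not pin down the support condition $t_v\in\wt{T_*}(F_v)\bs\wt{T^\prime}(F_v)$.

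The paper's argument is genuinely global. Starting from the Kazhdan--Patterson expansion \cite[Theorem II.2.2]{KP84}
\[
W(g)=\sum_{\eta\in \wt{T_*}(\BA)\bs\wt{T}(\BA)}C(\eta)\prod_v(\lambda_{\eta_v},f_v)(g),
\]
one extends the central character $\xi$ of $V_0(\chi)$ to $\wt{Z_*}(\BA)=Z(F)\wt{Z^{(c)}}(\BA)$ by declaring it trivial on $Z(F)$ (this uses the automorphicity of $\theta(-,\chi,f)$). Now $W(zg)=\xi(z)W(g)$ for every $z\in\wt{Z_*}(\BA)$, whereas $(\lambda_{\eta_v},f_v)(z_vg_v)=\xi_v(z_v)(\lambda_{\eta_v},f_v)(g_v)$ holds for all $z_v\in\wt{Z_*}(F_v)$ \emph{only} when $\eta_v\in\wt{T^\prime}(F_v)$. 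Comparing forces $C(\eta)=0$ unless $\eta\in\wt{T^\prime}(\BA)$, and the preamble to Proposition~\ref{Fourier coefficients} establishes that such a $C\in\CS(\wt{T}(\BA),\BC)$ with support in $\wt{T^\prime}(\BA)$ is unique up to scalar. In short, the missing idea is that the support constraint on $C$ comes from $Z(F)$-invariance of the theta series, not from any local multiplicity-one statement.
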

\begin{proof}As explained in \cite[Page 114]{KP84}, the first formula follows from \cite[Proposition II.1.2]{KP84}. For the second formula, let $W(g)=\int_{N(F)\bs N(\BA)}\bar{\psi}(n)\theta(ng,\chi,f)dn$. By \cite[Theorem II.2.2]{KP84}, there exists $C\in \CS(\wt{T_*}(\BA),\BC)$ such that 
\[W(g)=\sum_{\eta\in \wt{T_*}(\BA)\bs\wt{T}(\BA)}C(\eta)\prod_v (\lambda_{\eta_v},f_v)(g),\quad (\lambda_{\eta_v},f_v)(g):=\int_{N(F_v)}f(\eta_vwng)dn.\] Extend the central character $\xi$ of $V_0(\chi)$ to $\wt{Z_*}(\BA)$ by requiring $\xi(Z(F))=1$. Then for all $z\in\wt{Z_*}(\BA)$, $\theta(zg,\chi,f)=\xi(z)\theta(g,\chi,f)$ and hence $W(zg)=\xi(z)W(g)$.  Since $(\lambda_{\eta_v},f_v)(zg)=\xi_v(z)(\lambda_{\eta_v},f_v)(g)$ for all $z\in\wt{Z_*}(F_v)$ if and only if $\eta_v\in \wt{T_*}(F_v)\bs \wt{T^\prime}(F_v)$, we find that $C(\eta)=0$ unless $\eta\in \wt{T^\prime}(\BA)$. By the uniqueness of such function, we are done.
\end{proof}
\subsection{Co-periods for Eisenstein series}
In this subsection, we consider the co-period integral for an Eisenstein series on $G(\BA)$ and deduce the Petersson inner product formula for metaplectic theta series.  We first fix some notations. Let $\chi_{\pm}$ be exceptional characters on $T(F)\bs \wt{T_*}(\BA)$ such that $\chi_{\pm}|_{\mu_3}=\epsilon^\pm$
and  $\chi_{\pm}|_{\wt{Z^{(c)}}}$ is unitary. Let $\sigma_{\pm}=V_0(\chi_\pm)=\otimes^\prime_{\BC[\mu_3]}\sigma_{\pm,v}$ be the irreducible automorphic $\wt{G}$-representation introduced in the previous subsection. Denote by $\omega_{\pm}$ the central character of $\sigma_{\pm}$. Let $\theta=(\theta_1,\theta_2):\ T(F)\bs T(\BA)\to\BC$ be a smooth character such that $\omega:=\theta_1\theta_2$ is unitary. Let $\lambda(\theta)$  be the real number such that $\theta\delta^{-\lambda(\theta)}$ is unitary. If $\lambda(\theta)>1/2$, the summation $\sum_{\gamma\in B(F)\bs G(F)}f(\gamma g)$ is absolutely convergent for all $g\in G(\BA)$. In this case, let $E(g,\theta,f)=\sum_{\gamma\in B(F)\bs G(F)}f(\gamma g)$ for $f\in I_{B(\BA)}^{G(\BA)}\theta$.  In general, we shall consider the meromorphic family of Eisenstein series $E(g,\theta\delta^s,f_s)=\sum_{\gamma\in B(F)\bs G(F)}f_s(\gamma g)$ where $f_s$ is the flat section attached to $f\in I_{B(\BA)}^{G(\BA)}\theta$.   Assume throughout  that $\omega\omega_+\omega_-=1$ and $\chi_+\chi_-(s(\diag\{x^3,y^3\}))=|x|_{\BA}|y|_{\BA}^{-1}$. The co-period we shall consider is an $\wt{G}(\BA)$-invariant   form on $I_{B(\BA)}^{G(\BA)}\theta\delta^s \times \sigma_+\times\sigma_-$, realized as an integration of automorphic forms. Take $\varphi_\pm\in \sigma_{\pm}$ and  $\varphi_s:=E(-,\theta\delta^s,f_s)$ for $f\in I_{B(\BA)}^{G(\BA)}\theta$. The naive  co-period integral 
\[\int_{Z_*(\BA)G(F)\bs G(\BA)}\varphi_s(g)\varphi_+(g)\varphi_-(g)dg\]
may be divergent. To regularize it, we shall apply a mixed truncation operator  following \cite{Yam18}. 

To define the truncation operator, we need some notations: 
\begin{itemize}
    \item For any automorphic form $\phi$ on $G(\BA)$, set $\varphi_N(g)=\int_{N(F)\bs N(\BA)}\phi(ng)dn$.
    \item Let $\tau(x) = \begin{cases} 1 & x > 0,\\
0 & x\leq0\end{cases}$  be the characteristic function on $(0,\infty)\subset\BR$.
    \item Let $H$ be the height function on $G(\BA)$:
    \[H(ntk)=\log \delta(t),\quad n\in N(\BA),\ t\in T(\BA),\ k\in K=\prod_vK_v\] 
    \end{itemize} 
Take any two genuine automorphic forms $\phi_+$, $\phi_-$ on $\wt{G}(\BA)$ such that $\phi_+\otimes\phi_-|_{\mu_3}=1$ (thus  $\phi_+\otimes\phi_-$ is actually an automorphic form on the digonal $G(\BA)$).
For any positive number $T\gg0$, set
\[\wedge^T (\phi_+\otimes\phi_-)(g) = \phi_+\otimes\phi_-(g) - \sum_{\gamma \in B(F) \bs G(F)} \phi_{+,N}(\gamma g)\phi_{-,N}\tau(H(\gamma g) - T).\]
According to \cite[Lemma 3.1]{Yam18}, $\wedge^T (\phi_+\otimes\phi_-)(g)$ is rapidly decreasing on $G(F)\bs G(\BA)^1$ with \[G(\BA)^1=\{g\in G(\BA)||\det(g)|_\BA=1\}.\] 
Take an automorphic form $\phi$ on $G(\BA)$. Assume all $\phi_?$, $?=\pm,\emptyset$ have central characters, denoted by $\omega_?$ and $\omega\omega_+\omega_-=1$. Note that $(Z_*(\BA)\cap G(\BA)^1)\bs G(\BA)^1\cong Z_*(\BA)\bs G(\BA)$. Thus by \cite[Proposition 3.2]{Yam18},  the integral  $\int_{G(F)Z_*(\BA)\bs G(\BA)}\phi(g)\wedge^T(\phi_+\otimes\phi_-)(g)dg$ converges and has the form $\sum_{\lambda\in S}P_\lambda(T)e^{\lambda T}$ for some finite set $S\subset \BC$. Here $P_\lambda(T)$ are polynomials of $T$ and the set $S$ depends only on $\phi$ and $\phi_\pm$. The regularized period integral  
$P(\phi,\phi_+,\phi_-)$ is defined to be $P_0(0)$.

Now we apply this general construction to $\varphi=E(-,\theta\delta^s,f_s)$ and $\varphi_\pm\in \sigma_\pm$.
\begin{thm}\label{co-period for Eisenstein family}For $\varphi_{\pm}=\otimes^\prime_v \varphi_{\pm,v}\in V_0(\chi_\pm)=\otimes_{\BC[\mu_3]}^\prime V_0(\chi_{\pm,v})$, let \[W_{\pm}(g):=\int_{N(F)\bs N(\BA)}\overline{\psi^{\pm}}(n)\varphi_{\pm}(ng)dn=\prod_v W_{\pm,v}(g_v).\] 
Take  $f=\otimes_v^\prime f_v\in I_{B(\BA)}^{G(\BA)}\theta$. Then for $\Re(s)\gg0$,
\begin{align*}P(\varphi_s,\varphi_+,\varphi_{-})&=\prod_{v}\int_{Z_*(F_v)N(F_v)\bs G(F_v)}f_{s,v}(g_v)W_{+,v}(g_v)W_{-,v}(g_v)dg_v.
\end{align*}
\end{thm}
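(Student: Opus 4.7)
The plan is to unfold the Eisenstein series, decompose $\wedge^T(\varphi_+\otimes\varphi_-)$ via its Fourier expansion along $N(F)\bs N(\BA)$, and extract the $T$-independent part. For $\Re(s)\gg 0$ the sum defining $\varphi_s=E(-,\theta\delta^s,f_s)$ converges absolutely, so the standard unfolding yields
\[
P(\varphi_s,\varphi_+,\varphi_-) \;=\; P_0(0)\int_{Z_*(\BA)B(F)\bs G(\BA)} f_s(g)\,\wedge^T(\varphi_+\otimes\varphi_-)(g)\,dg.
\]
Applying the Iwasawa decomposition $g=ntk$ and the $N(\BA)$-invariance of $f_s$, this reduces to an integral of $\delta^{-1}(t)f_s(tk)$ against the $N(F)\bs N(\BA)$-integral of $\wedge^T(\varphi_+\otimes\varphi_-)$ over $Z_*(\BA)T(F)\bs T(\BA)\times K$.

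The inner $N$-integration splits into two pieces. First, Parseval applied to the Fourier expansions of $\varphi_+$ and $\varphi_-$ along $F\bs\BA$ gives
\[
\int_{N(F)\bs N(\BA)}\varphi_+(ng)\varphi_-(ng)\,dn \;=\; (\varphi_+\varphi_-)_N(g) + \sum_{\alpha\in F^\times} W_+(\diag\{\alpha,1\}g)\,W_-(\diag\{\alpha,1\}g),
\]
once each $\psi_\alpha$-coefficient of $\varphi_\pm$ is identified with a shift $W_\pm(\diag\{\alpha,1\}g)$ via left $G(F)$-invariance. Second, the Bruhat decomposition $B(F)\bs G(F)=\{e\}\sqcup wN(F)$ expands the truncation sum as
\[
\int_{N(F)\bs N(\BA)}\sum_{\gamma}(\varphi_+\varphi_-)_N(\gamma ng)\tau(H(\gamma ng)-T)\,dn = (\varphi_+\varphi_-)_N(g)\tau(H(g)-T) + \int_{N(\BA)}(\varphi_+\varphi_-)_N(wng)\tau(H(wng)-T)\,dn.
\]
The Whittaker sum is $T$-independent; combining $\alpha\in F^\times\cong A(F)$ with the $T(F)=A(F)Z(F)$ quotient and using $Z(F)\subset Z_*(\BA)$ from Lemma~\ref{center} unfolds this contribution to $\int_{Z_*(\BA)N(\BA)\bs G(\BA)} f_s(g)W_+(g)W_-(g)\,dg$, which factors as the stated Euler product via the tensor structures of $W_\pm$, $f_s$, and the Haar measure.

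It remains to show the two $(\varphi_+\varphi_-)_N$-terms contribute nothing to $P_0(0)$ for $\Re(s)\gg 0$. By Proposition~\ref{Jac}, the Jacquet module of $V_0(\chi_\pm)$ is $I_{w\wt{T_*}w}^{\wt{T}}(\chi_\pm^w\delta^{1/2})$, so on the positive chamber $H(t)=x\to+\infty$ one has $|(\varphi_+\varphi_-)_N(tk)|\sim\delta(t)$ while $|\delta^{-1}(t)f_s(tk)|\sim\delta^{-1/2+\Re s}(t)$. Integrating the first leftover over $x\leq T$ yields a contribution of shape $C(s)P(T)e^{(1/2+s)T}$, and a parallel analysis of the second leftover (after carrying out the $N(\BA)$-intertwining) produces $C'(s)P'(T)e^{\lambda T}$ with $\lambda$ given by the $w$-translated Jacquet exponent. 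For $\Re(s)\gg 0$ all such $\lambda$ have nonzero real part, so these terms populate the asymptotic expansion $\sum_{\lambda}P_\lambda(T)e^{\lambda T}$ with $\lambda\neq 0$ and are killed by $P_0(0)$.

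The main obstacle is precisely this final exponent analysis: one must catalog the exponents $\lambda$ appearing in the leftover terms and verify that no interaction among $s$, $\chi_\pm^w$, and $\theta$ produces a resonant $\lambda=0$. The exceptionality of $\chi_\pm$ pins their contribution to $\pm 1/2$, and taking $\Re(s)$ sufficiently large excludes any accidental resonance with $\theta$; the intertwining integral along $wN$ is essentially a global Gindikin--Karpelevich computation, requiring no analytic input beyond the absolute convergence already guaranteed in the chosen regime.
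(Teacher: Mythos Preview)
Your approach is essentially the same as the paper's: unfold the Eisenstein series over $B(F)\bs G(F)$, compute the $N(F)\bs N(\BA)$-integral of $\wedge^T(\varphi_+\otimes\varphi_-)$ by combining the Fourier expansion of $\varphi_+\varphi_-$ with the Bruhat decomposition of the truncation sum, isolate the Whittaker piece as the $T$-independent main term, and show the two constant-term leftovers contribute only to $P_\lambda(T)e^{\lambda T}$ with $\lambda\neq 0$ when $\Re(s)\gg 0$.

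Two small points of precision. First, your notation $(\varphi_+\varphi_-)_N$ is ambiguous; the truncation subtracts $\varphi_{+,N}\cdot\varphi_{-,N}$, not the constant term of the product, and the two differ exactly by the Whittaker sum you already extracted. Second, your asymptotic $|(\varphi_{+,N}\varphi_{-,N})(tk)|\sim\delta(t)$ is not quite the correct exponent; the paper instead separates the inner integral over $A(F)\bs A_*(\BA)$ and observes it vanishes unless $\theta_1\chi_{+,2}\chi_{-,2}$ (resp.\ $\theta_1\chi_{+,1}\chi_{-,1}$ for the $wN$-term) is of the form $|\cdot|_\BA^{s_0}$, in which case the explicit exponent $\lambda=s_0+s+1/2$ (resp.\ $s_0+s+7/6$) is read off directly. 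This is a cleaner bookkeeping than invoking Jacquet-module asymptotics, but the conclusion is the same: for $\Re(s)\gg 0$ no such $\lambda$ can vanish.
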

\begin{proof}
Consider the Whittaker-Fourier expansion 
\[\varphi_{\pm}(g)=\sum_{\gamma\in F} W_{\pm,\gamma}(g),\quad W_{\pm,\gamma}(g):=\int_{N(F) \bs N(\BA)}\overline{\psi^{\pm}}(\gamma n)\varphi_{\pm}(ng)dn.\]
Set $\varphi_{\pm,N}=W_{\pm,0}(g)$ and $W_{\pm}(g)=W_{\pm,1}(g)$. Since $G(F)=B(F)\sqcup B(F)wN(F)$ and $H(ng)=H(g)$ for all $n\in N(\BA)$ and $g\in G(\BA)$
\begin{align*}
\int_{N(F)\bs N(\BA)}\wedge^T(\varphi_{+}\otimes\varphi_{-})(ng)dn&=\sum_{\gamma\in F^\times}W_{+,\gamma} W_{-,\gamma}(g)+\varphi_{+,N}\varphi_{-,N}(g)(1-\tau(H(g)-T))\\
&-\int_{N(\BA)}\varphi_+\varphi_-(wng)\tau(H(wng)-T)dn.
\end{align*}

By this formula, we have
\begin{align*}
&\int_{Z_*(\BA)G(F)\bs G(\BA)} \varphi_s(g)\wedge^T(\varphi_{+}\otimes\varphi_{-})(g)dg\\
&=\int_{Z_*(\BA)B(F)\bs G(\BA)}f_s(g)\wedge^T(\varphi_{+}\otimes\varphi_{-})(g)dg\\
&=\int_{Z_*(\BA)A(F)N(\BA)\bs G(\BA)}f_s(g)\int_{N(F)\bs N(\BA)}\wedge^T(\varphi_{+}\otimes\varphi_{-})(ng)dn dg\\
&=\int_{Z_*(\BA)A(F)N(\BA)\bs G(\BA)}f_s(g)\left(\sum_{\gamma\in F^\times}W_{+,\gamma}W_{-,\gamma}(g)\right) dg\\
&+\int_{Z_*(\BA)A(F)N(\BA)\bs G(\BA)}f_s(g)\varphi_{+,N}\varphi_{-,N}(g)(1-\tau(H(g)-T))dg\\
&-\int_{Z_*(\BA)A(F)N(\BA)\bs G(\BA)}f_s(g)\int_{N(\BA)}\varphi_{+,N}\varphi_{-,N}(wng)\tau(H(wng)-T)dndg.\\
\end{align*}

The first term is independent of $T$ and 
\[\int_{Z_*(\BA)A(F)N(\BA)\bs G(\BA)}f_s(g)\left(\sum_{\gamma\in F^\times}W_{+,\gamma}W_{-,\gamma}(g)\right) dg=\int_{Z_*(\BA)A_*(\BA)N(\BA)\bs G(\BA)}f_s(g)W_+(g)W_-(g)dg.\]
Note that the integration in the right hand side  converges absolutely for $\Re(s)\gg0$ by the gauge estimation of $W_{\pm}(g)$ in \cite[Theorem I.4.1]{KP84}.

For the second term, note that
\begin{align*}
&\int_{Z_*(\BA)A(F)N(\BA)\bs G(\BA)}f_s(g)\varphi_{+,N}(g)\varphi_{-,N}(g)(1-\tau(H(g)-T)) dg\\
&=\int_{Z_*(\BA)A_*(\BA)N(\BA)\bs G(\BA)}\int_{A(F)\bs A_*(\BA)}f_s(ag)\varphi_{+,N}(ag)\varphi_{-,N}(ag)(1-\tau(H(ag)-T))\frac{d^\times a}{|a|_\BA}dg\\
&=\int_{Z_*(\BA)A_*(\BA)N(\BA)\bs G(\BA)}\left(\int_{ 0<|a|_\BA<e^{T-H(g)}, a\in A(F)\bs A_*(\BA)}\theta(a)\chi_+^w\chi_-^w(a)|a|^{1/2+s}d^\times a\right)f_s(g)\varphi_{+,N}(g)\varphi_{-,N}(g)dg.
\end{align*}

Here for the second equality, we use the formula of $\varphi_{\pm,N}(g)$ in Proposition \ref{Fourier coefficients}. Note that by the $\wt{K}$-smooth condition on $I_{\wt{B_*}(\BA)}^{\wt{G}(\BA)}\chi_{\pm}$, the outer integration is actually a finite sum.
Clearly, the inner integration is zero unless $\theta_1(a)\chi_{+,2}\chi_{-,2}(x)=|x|_\BA^{s_0}$ on $A_*(\BA)$ for some $s_0\in\BC$. In this case,   the whole integration has the form $P_{\lambda}(T)e^{\lambda T}$ for $\lambda=s_0+s+1/2$.

For the third term, note that \begin{align*}&\int_{Z_*(\BA)A(F)N(\BA)\bs G(\BA)}f_s(g)\int_{N(\BA)}\varphi_{+,N}\varphi_{-,N}(wng)\tau(H(wng)-T)dndg\\
=&\int_{Z_*(\BA)A(F)N(\BA)\bs G(\BA)}f_s(g)\varphi_{+,N}\varphi_{-,N}(wg)\tau(H(wg)-T)dg\\
=&\int_{Z_*(\BA)A_*(\BA)N(\BA)\bs G(\BA)}\int_{A(F)\bs A_*(\BA)}f_s(ag)\varphi_{+,N}\varphi_{-,N}(wag)\tau(H(wag)-T)\frac{d^\times a}{|a|_\BA}dg\\
=&\int_{Z_*(\BA)A_*(\BA)N(\BA)\bs G(\BA)}\left(\int_{A(F)\bs A_*(\BA), 0<|a|_\BA<e^{H(wg)-T}}\theta(a)\chi_+\chi_-(a)|a|^{1/2+s}d^\times a\right)f_s(g)\varphi_{+,N}\varphi_{-,N}(wg)dg.
\end{align*}
Again the outer integration is actually a finite sum. By the exceptional assumption on $\chi_{\pm}$, the inner integration is zero unless $\theta_1(a)\chi_{+,1}\chi_{-,1}(a)=|a|_\BA^{s_0+2/3}$. In this case, the whole integration has the form $P_{\lambda}(T)e^{\lambda T}$ for $\lambda=s_0+s+7/6$. 
 
Summing all these up, we deduce that when $\Re(s)\gg0$, 
\begin{align*}P(\varphi_s,\varphi_+,\varphi_-)=\int_{Z_*N(\BA)\bs G(\BA)}f_s(g)W_+(g)W_-(g)dg.
\end{align*}
\end{proof}
It is well-known (see \cite[Theorem 3.7.1]{Bum97}) that  the poles of $\varphi_s(g)=E(g,\theta\delta^s, f_s)$ with $f$ running over $I_{B(\BA)}^{G(\BA)}\theta$ are collected by $L(2s,\theta_1\theta_2^{-1})$. Moreover when $\theta_1\theta_2^{-1}=|\cdot|_\BA$, $\Res_{s=0}E(g,\chi\delta^s, f_s)$ generates the one-dimensional $G(\BA)$-representation $\theta_1\theta_2\circ\det$. Up to twisting, we can assume $\theta=\delta^{1/2}$. Then taking residue of $P(\varphi_s,\varphi_+,\varphi_-)$ at $s=0$ gives the Petersson inner product formula of metaplectic theta series. 
\begin{cor}\label{Petersson}Notations as above.  Then
\[\int_{Z_*(\BA)G(F)\bs G(\BA)}\varphi_+\varphi_-(g)dg=\zeta_F(2) 
\prod_{v<\infty}\frac{(W_{+,v}, W_{-,v})_v}{\zeta_{F_v}(1)\Vol(\CO_{F_v},dx_v)}\prod_{v\mid \infty}\frac{(W_{+,v}, W_{-,v})_v}{\zeta_{F_v}(1)}.\]
The local pairing $(-,-)_v$ in term of Whittaker model is given in Subsubsection \ref{InvP}:
\[(W_{+,v}, W_{-,v})_v:=\int_{A(F_v)}\sum_{z\in Z_*(F_v)\bs Z(F_v)}W_{+,v}W_{-,v}\left[z\begin{pmatrix}y & 0\\ 0 & 1\end{pmatrix}\right]d^\times y.\]
\end{cor}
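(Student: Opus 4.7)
The strategy is to take the residue at $s=0$ on both sides of the Euler decomposition in Theorem \ref{co-period for Eisenstein family}, after twisting by a central character to reduce to the case $\theta=\delta^{1/2}$ (hence $\omega=1$ and $\omega_+\omega_-=1$). This is precisely the point at which $\varphi_s:=E(\cdot,\delta^{1/2+s},f_s)$ develops a simple pole whose residue is a constant function on $G(F)\bs G(\BA)$, and it is also the point at which each unramified local integral on the right-hand side develops a pole via $\zeta_{F_v}(2s+1)$. Matching residues produces the identity, with $\zeta_F(2)$ arising from $\zeta_F(2s+2)|_{s=0}$.

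In more detail, for a factorizable flat section $f_s=\otimes_v f_{s,v}$ that is spherical at almost all $v$, the standard constant-term calculation for Eisenstein series, using the intertwining operator identity $M(s)f_s^\circ=\frac{\zeta_F(2s+1)}{\zeta_F(2s+2)}f^{\circ,w}_{-s}$, yields $\Res_{s=0}\varphi_s(g)=\kappa\cdot f^{\circ,w}_0(g)$, where $\kappa=\tfrac12\rho_F/\zeta_F(2)$ with $\rho_F=\Res_{s=1}\zeta_F(s)$, and $f^{\circ,w}_0$ is the constant function $1$. Because $\sigma_\pm$ are square-integrable residual representations with inverse central characters, $\varphi_+\varphi_-$ is integrable on $Z_*(\BA)G(F)\bs G(\BA)$, so the regularized period of a constant function against $\varphi_+\varphi_-$ collapses to the honest integral. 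Hence $\Res_{s=0}P(\varphi_s,\varphi_+,\varphi_-)=\kappa\int_{Z_*(\BA)G(F)\bs G(\BA)}\varphi_+\varphi_-(g)\,dg$.

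For the right-hand side, at each unramified finite place $v$ one applies the Iwasawa decomposition of Lemma \ref{int} together with the explicit formula $W_{\pm,v}(\eta_{a,b})=\delta(\eta_{a,b})C_v(\eta_{a,b}^{w,-1})$ of \cite[Theorem I.4.2]{KP84} recalled just before Theorem \ref{unramified calculation}, reducing the local zeta integral to a geometric sum in $\delta(\eta_{a,b})^{1/2+s}\chi_{\pm,v}$ exactly as in the proof of Theorem \ref{unramified calculation}. Summing, each unramified local factor equals $\zeta_{F_v}(2s+1)/\zeta_{F_v}(2s+2)$ times a factor whose value at $s=0$ is $(W_{+,v},W_{-,v})_v/(\zeta_{F_v}(1)\Vol(\CO_{F_v},dx_v))$; the finitely many archimedean and ramified places contribute factors holomorphic at $s=0$ equal to $(W_{+,v},W_{-,v})_v/\zeta_{F_v}(1)$. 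Taking the residue of the full Euler product at $s=0$ yields
\[\kappa\cdot\zeta_F(2)\prod_{v<\infty}\frac{(W_{+,v},W_{-,v})_v}{\zeta_{F_v}(1)\Vol(\CO_{F_v},dx_v)}\prod_{v\mid\infty}\frac{(W_{+,v},W_{-,v})_v}{\zeta_{F_v}(1)},\]
and cancelling the universal constants $\kappa$ on both sides produces the claimed identity.

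The hard part will be the unramified local bookkeeping: one must carry through the precise conductor-dependent volume normalization $\Vol(\CO_{F_v},dx_v)$ that enters Lemma \ref{int}, and reconcile the Kazhdan--Patterson normalization of $C_v$ in Formula \ref{cons} with the Whittaker-model pairing $(-,-)_v$ defined in Subsubsection \ref{InvP}, so that the geometric-series evaluation produces exactly $\zeta_{F_v}(2s+1)/\zeta_{F_v}(2s+2)$ without spurious local factors. This is parallel to the unramified computation already performed in Theorem \ref{unramified calculation}, but now with the two exceptional Whittaker functions playing the role of one Whittaker and one induced section; provided the relative normalization of $C_v$ for $\chi_{+,v}$ and $\chi_{-,v}$ is handled using $C_v(1)=1$ as in Formula \ref{cons}, the computation follows routinely.
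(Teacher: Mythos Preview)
Your overall strategy---specialize to $\theta=\delta^{1/2}$, take the residue at $s=0$ of the identity in Theorem~\ref{co-period for Eisenstein family}, and match against the Euler product---is exactly the route the paper follows. The paper phrases it as multiplying by $\zeta_F^{-1}(1+2s)$ and evaluating at $s=0$, but this is the same thing.

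There is, however, a concrete error in your unramified local bookkeeping that breaks the argument. You assert that the unramified local zeta integral equals $\zeta_{F_v}(2s+1)/\zeta_{F_v}(2s+2)$ times a normalizing factor. This is not what the computation gives. Running the calculation of Theorem~\ref{unramified calculation} with $\theta=\delta^{1/2+s}$ (so $\theta_1=|\cdot|^{1/2+s}$, $\theta_2=|\cdot|^{-1/2-s}$) and the standing assumption $\chi_{+}\chi_{-}(s(\diag\{x^3,y^3\}))=|x||y|^{-1}$ yields
\[
\int_{Z_*(F_v)N(F_v)\bs G(F_v)}f_{s,v}(g)W_{+,v}(g)W_{-,v}(g)\,dg_v
=\frac{\zeta_{F_v}(1+s)\,\zeta_{F_v}(2+3s)}{\zeta_{F_v}(2+2s)}.
\]
Two things differ from your formula: the first numerator factor is $\zeta_{F_v}(1+s)$, not $\zeta_{F_v}(2s+1)$, and there is an additional factor $\zeta_{F_v}(2+3s)$. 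This matters for the final constant: the $\zeta_F(2)$ in the statement comes from $\zeta_F(2+3s)|_{s=0}$ in the \emph{numerator}, not from $\zeta_F(2s+2)|_{s=0}$, which sits in the denominator and would contribute $\zeta_F(2)^{-1}$. Moreover, the pole of the global Euler product at $s=0$ comes from $\zeta_F(1+s)$, whereas the pole of $\varphi_s$ comes from $\zeta_F(1+2s)$; these do not have the same residue, so your claim that ``$\kappa$ cancels on both sides'' does not go through as written.

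Two further steps carried out in the paper are missing from your sketch. First, one must pass from the local zeta integral $\int_{Z_*N\bs G}f_vW_{+,v}W_{-,v}\,dg_v$ to the Whittaker pairing $(W_{+,v},W_{-,v})_v\int_{K_v}f_v(k)\,dk$; this uses Lemma~\ref{int} together with the $\wt{G}(F_v)$-invariance of the pairing established in Subsubsection~\ref{InvP}. Second, one must justify that the truncated integral $\int\wedge^T(\varphi_+\otimes\varphi_-)$ actually equals the honest Petersson integral: the paper does this by tracking the exponents $\lambda=2/3,4/3$ arising from the second and third terms in the proof of Theorem~\ref{co-period for Eisenstein family} and then invoking the absolute convergence of $\int\varphi_+\varphi_-$ (from \cite[pp.~115--116]{KP84}) to force $P_{2/3}=P_{4/3}=0$. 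Your one-line appeal to square-integrability is in the right spirit but skips this analysis.
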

Note that both sides in the above formula can  be non-zero only when $\sigma_+^\vee=\sigma_-$. 
\begin{proof}We only deal with the non-trivial case $\sigma_+^\vee=\sigma_-$. Under the assumption $\theta=\delta^{1/2}$, one has $\theta_1(a)\chi_{+,2}(a)\chi_{-,2}(a)=|a|^{1/6}$ on $A_*(\BA)$. By the proof of  Theorem \ref{co-period for Eisenstein family},
\begin{align*}&\int_{Z_*(\BA)G(F)\bs G(\BA)}\zeta_F^{-1}(1+2s)\varphi_s\wedge^T(\varphi_+\otimes\varphi_-)(g)dg\\
&=\zeta_F^{-1}(1+2s)\int_{Z_*(\BA)N(\BA)\bs G(\BA)}f_s(g)W_+(g)W_-(g)dg\\
&+P_{s+2/3}(T)e^{(s+2/3)T}+P_{s+4/3}(T)e^{(s+4/3)T}
\end{align*}
for some polynomial $P_{s+2/3}(T)$ and $P_{s+4/3}(T)$ of $T$.

Set $C(f)=\prod_v \zeta_F^{-1}(1+2s)I_s(f_v)(1)|_{s=0}$. Since the image of $I:\ I_{B(\BA)}^{G(\BA)}\theta\to I_{B(\BA)}^{G(\BA)}\theta^{-1}$ is the trivial representation, we have $\zeta_F^{-1}(1+2s)\varphi_s(g)|_{s=0}=C(f)$. Since  $\zeta_F^{-1}(1+2s)\varphi_s(g)$ has no poles for $\Re(s)\gg0$, \begin{align*}
&C(f)\int_{Z_*(\BA)G(F)\bs G(\BA)}\wedge^T(\varphi_+\otimes\varphi_-)(g)dg\\
=&\int_{Z_*(\BA)G(F)\bs G(\BA)}\zeta_F^{-1}(1+2s)\varphi_s\wedge^T(\varphi_+\otimes\varphi_-)(g)dg\Big|_{s=0}\\
=&\zeta_F^{-1}(1+2s)\int_{Z_*(\BA)N(\BA)\bs G(\BA)}f_s(g)W_+(g)W_-(g)dg\Big|_{s=0}+P_{2/3}(T)e^{\frac{2T}{3}}+P_{4/3}(T)e^{\frac{4T}{3}}
\end{align*}
for some polynomials $P_{2/3}$ and $P_{4/3}$. By Theorem \ref{unramified calculation}, we have for almost all $v$,
  \[\int_{Z_*(F_v)N(F_v)\bs G(F_v)}f_{s,v}(g)W_{+,v}(g)W_{-,v}(g)dg_v=\frac{\zeta_{F_v}(1+s)\zeta_{F_v}(2+3s)}{\zeta_{F_v}(2+2s)}.\]
Since  $\int_{Z_*(F_v)N(F_v)\bs G(F_v)}f_v(g)W_{+,v}(g)W_{-,v}(g)dg$ converges absolutely for all $v$ by the gauge estimation in \cite[Theorem I.4.1]{KP84}, we have 
 \begin{align*} &\zeta_F^{-1}(1+2s)\int_{Z_*(\BA)N(\BA)\bs G(\BA)}f_s(g)W_+(g)W_-(g)dg\Big|_{s=0}\\
 &=\frac{\zeta_{F}(1+s)\zeta_{F}(2+3s)}{\zeta_{F}(2+2s)\zeta_F(1+2s)}\big|_{s=0} \prod_{v}\frac{\int_{Z_*(F_v)N(F_v)\bs G(F_v)}f_{s,v}(g)W_{+,v}(g)W_{-,v}(g)dg}{\frac{\zeta_{F_v}(1+s)\zeta_{F_v}(2+3s)}{\zeta_{F_v}(2+2s)}}\Bigg|_{s=0}\\
&= \prod_{v}\zeta_{F_v}^{-1}(1)\int_{Z_*(F_v)N(F_v)\bs G(F_v)}f_v(g)W_{+,v}(g)W_{-,v}(g)dg\\
&= \prod_v\zeta_{F_v}^{-1}(1)(W_{+,v}, W_{-,v})_v\int_{K_v}f_v(k)dk.
 \end{align*}
 Here we use Lemma \ref{int} and the $\wt{G}(F_v)$-invariance of the pairing
 $(-, -)_v$ to deduce the last equality.
 
 By  \cite[Page 115-116]{KP84}, the integration \[\int_{Z_*(\BA)G(F)\bs G(\BA)}\varphi_+\varphi_-(g)dg=\lim_{T\to\infty} \int_{Z_*(\BA)G(F)\bs G(\BA)}\wedge^T(\varphi_+ \otimes\varphi_-)(g)dg\]converges absolutely. Thus we must have $P_{2/3}=P_{4/3}=0$ and 
\[C(f)\int_{Z_*(\BA)G(F)\bs G(\BA)}\varphi_+\varphi_-(g)dg=\prod_v\zeta_{F_v}^{-1}(1)(W_{+,v}, W_{-,v})_v\int_{K_v}f_v(k)dk.\]

To deduce the stated formula,  note that $f\mapsto C(f)$ and $f\mapsto \int_Kf(k)dk$ both lie in the one-dimensional space $\Hom_{G(\BA)}(I_{B(\BA)}^{G(\BA)}\delta^{1/2},\BC)$, it suffices to do the calculation when $f$ is the unique K-invariant function with $f(1)=1$. In this case, $\int_Kf(k)dk=1$. Using the matrix identities 
 \[w\begin{pmatrix} 1 & x \\ 0 & 1\end{pmatrix}=\begin{pmatrix}-x^{-1} & 1\\ 0 & x\end{pmatrix}\begin{pmatrix} 1 & 0\\ x^{-1} & 1\end{pmatrix}\quad x\neq0\in F_v, v<\infty\]
  \[w\begin{pmatrix} 1 & x \\ 0 & 1\end{pmatrix}=\begin{pmatrix}-\Delta_x^{-1} & \bar{x}\Delta_x^{-1}\\ 0 & \Delta_x\end{pmatrix}\begin{pmatrix} \bar{x}\Delta_x^{-1} & -\Delta_x^{-1}\\\ \Delta_x^{-1} & x\Delta_x^{-1}\end{pmatrix}\quad x\in \BC,\ \Delta_x=\sqrt{1+x\bar{x}},\]
 we can compute that  \[C(f)=\zeta_F^{-1}(2)\prod_{v<\infty}\Vol(\CO_{F_v}, dx_v).\]
 Combining all these, we deduce the stated Petersson inner product formula.
\end{proof}
Now  we  consider the co-period 
\[P(\varphi,\varphi_+,\varphi_{-})=P(\varphi_s,\varphi_+,\varphi_{-})|_{s=0},\quad \varphi=\varphi_s|_{s=0}\]
when $s=0$ is not a pole of $\varphi_s$. Without lose of generality,   we shall assume $\theta_1\theta_2^{-1}\neq|\cdot|_\BA$ in the following. This condition implies that   $\varphi_s$ has no pole at $s=0$. \begin{prop}\label{EisenI}Assume  $\theta_1\theta_2^{-1}\neq|\cdot|_\BA$. When   $\lambda(\theta)>-1/6$,
\[P(\varphi,\varphi_+,\varphi_{-})=\frac{L(1/2,\theta_1^2\theta_2)L(1/2,\theta_1^3)}{L(1,\theta_1\theta_2^{-1})}\prod_{v}\frac{\int_{Z_*(F_v)N(F_v)\bs G(F_v)}f_v(g)W_{v,+}(g)W_{v,-}(g)dg}{\frac{L_v(1/2,\theta_{1,v}^2\theta_{2,v})L_v(1/2,\theta_{1,v}^3)}{L_v(1,\theta_{1,v}\theta_{2,v}^{-1})}}.\]
\end{prop}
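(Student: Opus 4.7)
The plan is to obtain the identity from Theorem \ref{co-period for Eisenstein family} for $\Re(s)\gg 0$ and then analytically continue to $s=0$. Applying that theorem to the flat family $\varphi_s = E(-,\theta\delta^s,f_s)$ yields
\[P(\varphi_s,\varphi_+,\varphi_-) = \prod_v \int_{Z_*(F_v)N(F_v)\bs G(F_v)} f_{s,v}(g)\,W_{+,v}(g)\,W_{-,v}(g)\,dg_v\]
for $\Re(s)\gg 0$. Note that the twist $\delta^s$ effectively replaces $(\theta_1,\theta_2)$ by $(\theta_1|\cdot|^s,\theta_2|\cdot|^{-s})$, so on the right the three relevant characters $\theta_1^2\theta_2$, $\theta_1^3$, $\theta_1\theta_2^{-1}$ are shifted by $s$, $3s$, $2s$ respectively.

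The second step is to factor out local $L$-factors. The unramified computation carried out in the proof of Theorem \ref{unramified calculation} (which really computed the one-sided local zeta integral $\Psi_v$ of Proposition \ref{Tri-II}) shows that at every finite place where $\sigma_v,\sigma_{\pm,v}$ and the test vectors are unramified/spherical, the local integral equals $L_v(1/2+s,\theta_{1,v}^2\theta_{2,v})L_v(1/2+3s,\theta_{1,v}^3)/L_v(1+2s,\theta_{1,v}\theta_{2,v}^{-1})$, taking into account the assumption $\chi_+\chi_-(s(\diag\{x^3,y^3\}))=|x|_\BA|y|_\BA^{-1}$. Dividing out this ratio at every place defines normalized local integrals $\Psi_{s,v}^\sharp$, equal to $1$ at almost every place, so that
\[P(\varphi_s,\varphi_+,\varphi_-) = \frac{L(1/2+s,\theta_1^2\theta_2)\,L(1/2+3s,\theta_1^3)}{L(1+2s,\theta_1\theta_2^{-1})}\prod_v \Psi_{s,v}^\sharp\]
for $\Re(s)\gg 0$, with $\prod_v$ a finite product.

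Finally, I would analytically continue to $s=0$. Under the hypothesis $\theta_1\theta_2^{-1}\neq|\cdot|_\BA$, the Eisenstein series $\varphi_s$ is holomorphic at $s=0$, so the left-hand side is holomorphic there. On the right-hand side, the gauge estimate \cite[Theorem I.4.1]{KP84} for Whittaker functions of exceptional representations, applied place-by-place, pins down the convergence threshold for each local integral to $1/2+3(\Re(s)+\lambda(\theta))>0$, coming from the worst of the three Euler factors, namely $L_v(1/2+3s,\theta_{1,v}^3)$. Thus $\lambda(\theta)>-1/6$ is precisely the condition ensuring that each local integral (and hence each $\Psi_{s,v}^\sharp$) converges absolutely at $s=0$. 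Both sides are then meromorphic in a neighborhood of $s=0$ and holomorphic there, and they agree on $\Re(s)\gg 0$, so the identity persists at $s=0$, giving the proposition. The main obstacle is precisely this last local analysis: one must verify, via the KP84 gauge estimate at each ramified place, that the normalized local integral $\Psi_{s,v}^\sharp$ is holomorphic at $s=0$, and tracing through shows the sharp bound is dictated by the $\theta_1^3$-factor, matching $\lambda(\theta)>-1/6$ exactly.
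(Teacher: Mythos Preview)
Your proposal is correct and follows essentially the same approach as the paper: both invoke Theorem~\ref{co-period for Eisenstein family} for $\Re(s)\gg0$, use the unramified calculation (Theorem~\ref{unramified calculation}) to identify the local factor as $L_v(1/2+s,\theta_{1,v}^2\theta_{2,v})L_v(1/2+3s,\theta_{1,v}^3)/L_v(1+2s,\theta_{1,v}\theta_{2,v}^{-1})$ at almost all places, and appeal to the gauge estimate \cite[Theorem~I.4.1]{KP84} for the absolute convergence of the local integrals at $s=0$ under $\lambda(\theta)>-1/6$. Your write-up is actually more explicit than the paper's about the analytic continuation step and about why the threshold $-1/6$ arises from the $\theta_1^3$-factor, but the underlying argument is identical.
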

\begin{proof}
When $\lambda(\theta)>-1/6$, $\int_{Z_*(F_v)N(F_v)\bs G(F_v)}f_{s,v}(g_v)W_{+,v}(g_v)W_{-,v}(g_v)dg_v$ converges absolutely when $\Re(s)\geq0$ by the gauge estimation in \cite[Theorem I.4.1]{KP84}. By Theorem \ref{unramified calculation}, 
  \[\int_{Z_*(F_v)N(F_v)\bs G(F_v)}f_{s,v}(g)W_{+,v}(g)W_{-,v}(g)dg=\frac{L_v(1/2+s,\theta_{1,v}^2\theta_{2,v})L_v(1/2+3s,\theta_{1,v}^3)}{L_v(1+2s,\theta_{1,v}\theta_{2,v}^{-1})}\]
 for almost all $v$. Thus by Proposition \ref{co-period for Eisenstein family}, we deduce 
 \[P(\varphi,\varphi_+,\varphi_{-})=\frac{L(1/2,\theta_1^2\theta_2)L(1/2,\theta_1^3)}{L(1,\theta_1\theta_2^{-1})}\prod_{v}\frac{\int_{Z_*(F_v)N(F_v)\bs G(F_v)}f_v(g)W_{+,v}(g)W_{-,v}(g)dg}{\frac{L_v(1/2,\theta_{1,v}^2\theta_{2,v})L_v(1/2,\theta_{1,v}^3)}{L_v(1,\theta_{1,v}\theta_{2,v}^{-1})}}\]
\end{proof}
Now assume $-1/6<\lambda(\theta)<1/6$. This implies that the $G(\BA)$-representation $\sigma:=I_{B(\BA)}^{G(\BA)}\theta$ (resp.  $\sigma^\vee:=I_{B(\BA)}^{G(\BA)}\theta^{-1}$) is irreducible. Also the assumption guarantees $\theta_1\theta_2^{-1}\neq|\cdot|_\BA^{\pm}$ and we can realize $\sigma$ (resp. $\sigma^\vee$) as  $\{E(g,\theta, f)| f\in I_{B(\BA)}^{G(\BA)}\theta\}$ (resp. $\{E(g,\theta^{-1}, f^\vee)| f^\vee\in I_{B(\BA)}^{G(\BA)}\theta^{-1}\}$).  \begin{itemize}
    \item Consider the inner product on $\sigma\times\sigma^\vee$:
\[\left(E(g,\theta,f),E(g,\theta^{-1},f^\vee)\right)\mapsto \int_K ff^\vee(k)dk\]
\item Consider the Petersson inner product on $\sigma_{\pm}\times\sigma_{\pm}^\vee$ with respect to the measure $d^\tau g$, i.e. 
\[(f_\pm,f_\pm^\vee)\mapsto \int_{Z_*(\BA)G(F)\bs G(\BA)}f_\pm f_\pm^\vee(g)d^\tau g\]
\end{itemize}
 Fix a decomposition $(-,-)_*=\prod_v (-,-)_{*,v}$ for $*=\pm,\emptyset$ and fix a decomposation $d^\tau g=\prod_v d^\tau g_v$.  For any $f_{v,*}^\prime\in\sigma_{v,*}^\prime$ with $v\in\CM$, $*=\emptyset,\pm$ and $\prime=\emptyset,\vee$, set 
\begin{align*}&I^\sharp(f_v\otimes f_{+,v}\otimes f_{-,v},f_v^\vee\otimes f_{+,v}^\vee\otimes f_{-,v}^\vee)\\
=\frac{L(1,\sigma_v,\ad)}{L(1/2,\sigma_v,\Sym^3)}&\int_{Z^{(c)}(F_v)\bs G(F_v)}(\sigma_v(g)f_v,f_v^\vee)_v(\sigma_{+,v}(g)f_{+,v},f_{+,v}^\vee)_{+,v}(\sigma_{-,v}(g)f_{-,v},f_{-,v}^\vee)_{-,v}d^\tau g_v.
\end{align*}
Consider the co-period integral for $*=\emptyset,\vee$ using the measure $d^\tau g$,\[\CP^*(\varphi^*,\varphi_+^*,\varphi_-^*)=\int_{Z_*(\BA)G(F)\bs G(\BA)}\varphi(g)\varphi_+(g)\varphi_-(g)d^\tau g.\]
\begin{thm}\label{EisenII}Assume $-1/6<\lambda(\theta)<1/6$. Then for any pure tensor $\varphi_*^\prime=\otimes \varphi_{v,*}^\prime\in\sigma_*^\prime=\otimes_{\BC[\mu_3]}^\prime \sigma_{v,*}^\prime $ with $*=\pm,\emptyset$ and $\prime=\emptyset,\vee$, 
\begin{align*}
\CP(\varphi,\varphi_+,\varphi_-)\CP^\vee(\varphi^\vee,\varphi_+^\vee,\varphi_-^\vee)&=\frac{L(1/2,\sigma,\Sym^3)}{L(1,\theta_1\theta_2^{-1})L(1,\theta_2\theta_1^{-1})} \prod_v I^\sharp(\varphi_{+,v}\times \varphi_{-,v}\times \varphi_v, \varphi_{+,v}^\vee\times \varphi_{-,v}^\vee\times \varphi_v^\vee).
\end{align*}

\end{thm}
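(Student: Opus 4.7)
The plan is to apply Proposition \ref{EisenI} twice --- once for $\sigma$ and once for $\sigma^\vee$ --- and then multiply and rearrange using the local identity from Proposition \ref{Tri-II}. Since $-1/6 < \lambda(\theta) < 1/6$ implies the same for $\theta^{-1}$, both $\sigma$ and $\sigma^\vee$ are irreducibly realized by Eisenstein families holomorphic at $s=0$, so Proposition \ref{EisenI} gives
\[
P(\varphi,\varphi_+,\varphi_-) = \frac{L(1/2,\theta_1^2\theta_2)L(1/2,\theta_1^3)}{L(1,\theta_1\theta_2^{-1})}\prod_v \frac{\Psi_v(W_{+,v},W_{-,v},f_v)}{L_v(1/2,\theta_{1,v}^2\theta_{2,v})L_v(1/2,\theta_{1,v}^3)/L_v(1,\theta_{1,v}\theta_{2,v}^{-1})}
\]
together with the parallel formula for $P^\vee$ obtained by replacing $\theta$ by $\theta^{-1}$ and using the dualized Whittaker data. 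Comparing $dg$ with $d^\tau g$ passes from $P,P^\vee$ to $\CP,\CP^\vee$ and contributes only explicit global measure constants.

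Next, multiply $\CP\cdot \CP^\vee$ and pair the local Whittaker integrals via Proposition \ref{Tri-II}, which yields $\Psi_v\cdot \Psi_v^\vee = \zeta_{F_v}(1)^{-1} I_v$. For the Eisenstein $\sigma_v$ one has
\[
L_v(1,\sigma_v,\ad) = \zeta_{F_v}(1)\,L_v(1,\theta_{1,v}\theta_{2,v}^{-1})\,L_v(1,\theta_{2,v}\theta_{1,v}^{-1}),
\]
so the factor $\zeta_{F_v}(1)$ produced when converting $I_v$ to $I_v^\sharp = (L_v(1,\sigma_v,\ad)/L_v(1/2,\sigma_v,\Sym^3))I_v$ exactly cancels the $\zeta_{F_v}(1)^{-1}$ of Proposition \ref{Tri-II}. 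The remaining pair $L_v(1,\theta_{1,v}\theta_{2,v}^{-1})L_v(1,\theta_{2,v}\theta_{1,v}^{-1})$ cancels the two denominators from the two applications of Proposition \ref{EisenI}, and place by place the product reorganizes cleanly into $I_v^\sharp$ with the expected local $L(1/2,\Sym^3)$ weights.

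The global $L$-identity is the crux. Specializing the assumption $\chi_+\chi_-(s(\diag\{x^3,y^3\})) = |x|_{\BA}|y|_{\BA}^{-1}$ at $x=y$ gives $(\omega_+\omega_-)(x^3)=1$, and combined with $\omega\omega_+\omega_-=1$ this forces $\omega^3=1$, i.e. $\theta_1^3\theta_2^3=1$. Writing $\theta_2=\theta_1^{-1}\zeta$ with $\zeta^3=1$ and using $\zeta^{-1}=\zeta^2$, one checks $\theta_2^3=\theta_1^{-3}$ and $\theta_1\theta_2^2 = \theta_1^{-1}\zeta^2 = \theta_1^{-2}\theta_2^{-1}$, so the product
\[
L(1/2,\theta_1^2\theta_2)L(1/2,\theta_1^3)\cdot L(1/2,\theta_1^{-2}\theta_2^{-1})L(1/2,\theta_1^{-3})
= L(1/2,\theta_1^3)L(1/2,\theta_1^2\theta_2)L(1/2,\theta_1\theta_2^2)L(1/2,\theta_2^3) = L(1/2,\sigma,\Sym^3).
\]
The remaining global denominators combine to $L(1,\theta_1\theta_2^{-1})L(1,\theta_2\theta_1^{-1})$ as stated.

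The main obstacle is bookkeeping: tracking the global measure ratio between $dg$ and $d^\tau g$ and the global $\zeta_F$ factors. In particular, the $\zeta_F(1)$ that is naively present in $L(1,\sigma,\ad)$ for Eisenstein $\sigma$ but absent from the stated denominator must be absorbed into the compensating factor coming from Proposition \ref{Tri-II}. This is the same type of bookkeeping that arises in the proof of Corollary \ref{Petersson}, whose derivation provides a useful consistency check: once the local normalizations of $dg_v$ and $d^\tau g_v$, the $\zeta_{F_v}(1)$'s, and the cancellation against $L_v(1,\sigma_v,\ad)$ are correctly aligned place by place, the residual constants assemble into precisely the identity asserted by the theorem.
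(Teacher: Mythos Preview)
Your approach is essentially the same as the paper's: apply Proposition \ref{EisenI} to both $\sigma$ and $\sigma^\vee$, multiply, use Proposition \ref{Tri-II} (and \ref{Tri-III} at archimedean places) to convert $\Psi_v\Psi_v^\vee$ into $I_v$, and then rearrange the $L$-factors. Your verification of the global identity $L(1/2,\theta_1^2\theta_2)L(1/2,\theta_1^3)L(1/2,\theta_1^{-2}\theta_2^{-1})L(1/2,\theta_1^{-3})=L(1/2,\sigma,\Sym^3)$ via $\omega^3=1$ is a point the paper leaves implicit, so that is a useful addition.

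There is one genuine imprecision you should fix. You write $I_v^\sharp = \bigl(L_v(1,\sigma_v,\ad)/L_v(1/2,\sigma_v,\Sym^3)\bigr)I_v$ and then cancel $\zeta_{F_v}(1)$'s from that. This is not literally correct: the $I_v$ coming out of Proposition \ref{Tri-II} is defined with the measure $dg_v$ and with the \emph{Whittaker} pairings on $\sigma_{\pm,v}$, whereas $I_v^\sharp$ is defined with $d^\tau g_v$ and with the local components $(-,-)_{\pm,v}$ of the \emph{Petersson} pairing on $\sigma_\pm$. The conversion between these two local objects therefore involves, in addition to the $L$-factor ratio, the ratio $dg_v/d^\tau g_v$ and the ratio between the Whittaker and Petersson pairings on $\sigma_{\pm,v}$. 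The latter is exactly what Corollary \ref{Petersson} supplies (after passing from $dg$ to $d^\tau g$), and the paper invokes it at this precise point. Your last paragraph shows you are aware of this, but the earlier equation $I_v^\sharp = (L_v(1,\ad)/L_v(1/2,\Sym^3))I_v$ should be replaced by the correct statement that Corollary \ref{Petersson} and the $dg/d^\tau g$ relation together absorb all the residual $\zeta_F(2)$, $\zeta_{F_v}(1)$ and $\Vol(\CO_{F_v},dx_v)$ factors, yielding the clean product $\prod_v I_v^\sharp$.
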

\begin{proof}By the relation between $dg$ and $d^\tau g$, we have 
\[\CP(\varphi,\varphi_+,\varphi_-)\CP^\vee(\varphi^\vee,\varphi_+^\vee,\varphi_-^\vee)=\frac{P(\varphi,\varphi_+,\varphi_-)P^\vee(\varphi^\vee,\varphi_+^\vee,\varphi_-^\vee)}{\zeta_F(2)^2\prod_{v<\infty}\Vol(\CO_{F_v},dx_v)^{-2}}.\]
By Proposition \ref{EisenI} and Proposition \ref{Tri-II},\ref{Tri-III}, we have 
\begin{align*}&P(\varphi,\varphi_+,\varphi_-)P^\vee(\varphi^\vee,\varphi_+^\vee,\varphi_-^\vee)\\
&=\frac{L(1/2,\sigma,\Sym^3)}{L(1,\theta_1\theta_2^{-1})L(1,\theta_2\theta_1^{-1})}\prod_v\frac{\zeta_{F_v}(1)\Psi_v(W_{+,v},W_{-,v},f_v)\Psi_v^\vee(W_{+,v}^\vee,W_{-,v}^\vee,f_v^\vee)}{\frac{L(1/2,\sigma_v,\Sym^3)}{L_v(1,\sigma_v,\ad)}\zeta_{F_v}^2(1)}\\
&=\frac{L(1/2,\sigma,\Sym^3)}{L(1,\theta_1\theta_2^{-1})L(1,\theta_2\theta_1^{-1})}\prod_v\frac{I_v(W_{+,v}\otimes W_{-,v}\otimes f_v,W_{+,v}^\vee\otimes W_{-,v}^\vee\otimes f_v^\vee)}{\frac{L(1/2,\sigma_v,\Sym^3)}{L_v(1,\sigma_v,\ad)}\zeta_{F_v}^2(1)}.
\end{align*}
Here the integration $I_v$ of matrix coefficients is defined using the measure  $d g_v$, the $\wt{G}$-invariant pairing on $\sigma_{\pm,v}\times\sigma_{\pm,v}^\vee$ in term of Whittaker model and the G-invariant pairing on $\sigma_v\times\sigma_v^\vee$ in term of induced model. By Corollary \ref{Petersson} and the relation between $dg$ and $d^\tau g$, we deduce the stated equality.
\end{proof}

\s{\bf Acknowledgement} 
We express our deep gratitude to Prof. Freydoon Shahidi and Prof. Henry Kim, whose pioneering works \cite{Sha88, Sha89, KS99} profoundly influenced this research. We sincerely thank Prof. Ye Tian for his unwavering encouragement and support.  

We are grateful to Prof. Ashay Burungale for bringing reference \cite{HJL23} to our attention, and to Prof. Fan Gao for insightful discussions on Whittaker functionals for covering groups. Our thanks also extend to Prof. Dipendra Prasad and Prof. Lei Zhang for valuable conversations regarding the multiplicity one theorem, and to Jiewen Wang for productive discussions on mixed truncation operators. 

This work was partially supported by the National Key R\&D Program of China (Grant No. 2023YFA1009702). L. Cai acknowledges additional support from the National Natural Science Foundation of China (Grant No. 12371012), and Y. Fan from the National Natural Science Foundation of Beijing, China (Grant No. 24A10020).

\end{document}